\documentclass[11pt,oneside, %draft
]{amsart}
\usepackage[T3,T1]{fontenc}
\usepackage[english]{babel}
\usepackage{fouriernc} % Fourier fonts instead of Computer Modern

\usepackage[shortlabels]{enumitem}
\usepackage{comment}
\usepackage{adjustbox}
\usepackage{caption}
\usepackage{subcaption}
\usepackage{quiver}

\usepackage{geometry} % page geometry
\usepackage{xcolor} % colors
\usepackage{tikz} % drawing

\usetikzlibrary{cd}
\usetikzlibrary{calc, shapes}

\usepackage{graphicx, graphics}    

\newcommand{\remnant}{twisted endomorphism}%{remnant}
\newcommand{\coremant}{twisted coendomorphism}%{coremnant}
\newcommand{\boxmaps}[2]{{#1}\diamondsuit{#2}}

\usepackage{tikz}
\usetikzlibrary{matrix}
\usetikzlibrary{arrows,backgrounds,patterns,scopes,external,
    decorations.pathreplacing,
    decorations.pathmorphing
}

\usepackage{hyperref} % hyperlinks
\usepackage{stmaryrd}
\usepackage{fancyvrb} %for verbatium
\usepackage{dirtytalk}
\usepackage{pdflscape}
\usepackage{amsmath,amsthm,amssymb}
\usepackage[all,cmtip,2cell]{xy}
\UseTwocells

\usepackage{soul}
\sethlcolor{cyan}

\newcommand{\sh}[1]{{\ensuremath{\hspace{1mm}\makebox[-1mm]{$\langle$}\makebox[0mm]{$\langle$}\hspace{1mm}{#1}\makebox[1mm]{$\rangle$}\makebox[0mm]{$\rangle$}}}}

\usepackage[capitalise]{cleveref}  % load after hyperref
\newcommand{\clevertheorem}[3]{%
  \newtheorem{#1}[thm]{#2}
  \crefname{#1}{#2}{#3}
}

\numberwithin{equation}{section} %% Comment out for sequentially-numbered
\numberwithin{figure}{section} %% Comment out for sequentially-numbered

\usepackage{thmtools, thm-restate}

\theoremstyle{plain} % bold environment name, italic text
\newtheorem{thm}{Theorem}[section]
\crefname{thm}{Theorem}{Theorems}
\newtheorem*{thm*}{Theorem}
\clevertheorem{prop}{Proposition}{Propositions}
\newtheorem*{prop*}{Proposition}
\clevertheorem{lem}{Lemma}{Lemmas}
\crefname{lm}{Lemma}{Lemmas}
\clevertheorem{cor}{Corollary}{Corollaries}
\clevertheorem{conj}{Conjecture}{Conjectures}
\theoremstyle{definition} % bold environment name, plain text
\clevertheorem{defn}{Definition}{Definitions}
\newtheorem*{defn*}{Definition}
\clevertheorem{notn}{Notation}{Notations}
\clevertheorem{ex}{Example}{Examples}

\theoremstyle{remark} % italic environment name, plain text
\clevertheorem{rmk}{Remark}{Remarks}
\newtheorem*{rmk*}{Remark}

\makeatletter\let\c@equation\c@thm\makeatother
\makeatletter\let\c@figure\c@thm\makeatother

\crefname{figure}{Figure}{Figures}
\crefname{equation}{Display}{Displays} % give 'equation' a more general name 
\crefname{eq}{Display}{Displays}
\crefname{eqn}{Display}{Displays}

\newcommand{\cc}{\mathcal{C}}

\newcommand{\B}{\mathcal{B}}

\newcommand{\T}{\mathcal{T}}

\newcommand{\cb}{\overline{\Lambda}\mathcal{B}}

\newcommand{\ltwo}[1]{#1}%{\breve{#1}}
\newcommand{\lone}[1]{#1}%{\invbreve{#1}}
\newcommand{\cob}[1]{\overline{\Lambda}{#1}}

\DeclareMathOperator{\Hom}{Hom}

\DeclareMathOperator{\Mod}{\mathcal{M}\!\emph{od}}
\DeclareMathOperator{\Ring}{\mathcal{R}\!\emph{ing}}
\DeclareMathOperator{\Vect}{Vect}

\DeclareMathOperator{\tr}{tr}
\DeclareMathOperator{\Tr}{\mathbb{T}r}
\DeclareMathOperator{\co}{cotr}

\DeclareMathOperator{\id}{id}

\usepackage{xargs}  
\usepackage{todonotes}
\newcommandx{\kate}[3][1=]{\todo[inline,linecolor=green,backgroundcolor=green!10,bordercolor=green,#1]{Kate ({#2}): #3}}
\newcommandx{\katemar}[3][1=]{\todo[linecolor=green,backgroundcolor=green!10,bordercolor=green,#1]{Kate ({#2}): #3}}
\newcommandx{\justin}[2][1=]{\todo[inline,linecolor=blue,backgroundcolor=blue!10,bordercolor=blue,#1]{Justin: #2}}

\newcommandx{\travis}[3][1=]{\todo[inline,linecolor=red,backgroundcolor=red!10,bordercolor=red,#1]{Travis({#2}): #3}}
\newcommandx{\travismar}[3][1=]{\todo[linecolor=red,backgroundcolor=red!10,bordercolor=red,#1]{Travis ({#2}): #3}}
\usepackage{mathtools}
\usepackage{mathrsfs}

\usepackage{tikz-cd} 

\DeclareRobustCommand{\rchi}{{\mathpalette\irchi\relax}}

\usepackage{xargs}  
\usepackage{todonotes}

\usepackage{mathtools}
\usepackage{mathrsfs}

\usepackage{tikz-cd} 

\DeclareRobustCommand{\rchi}{{\mathpalette\irchi\relax}}
\newcommand{\irchi}[2]{\raisebox{\depth}{$#1\chi$}}
\newcommand{\coshh}[1]{\langle \hspace{-0.35ex} \langle #1 \langle \hspace{-0.35ex} \langle}

\DeclareMathOperator{\Aut}{Aut}
\DeclareMathOperator{\Fun}{Fun}

\DeclareMathOperator{\rhov}{\rho}
\DeclareMathOperator{\rhow}{\sigma}
\DeclareMathOperator{\Mat}{Mat}

\DeclareSymbolFont{tipa}{T3}{cmr}{m}{n}
\DeclareMathAccent{\invbreve}{\mathalpha}{tipa}{16}
\newcommand{\xto}{\xrightarrow}

\theoremstyle{plain}

\newcommand{\repdual}{shadow dualizable}

\hypersetup{
 pdfkeywords={latex starter,template},
 pdfauthor={Niles Johnson},
}

\title{Character theory for 2-representations}

\author{Kate Ponto}
\address{University of Kentucky }
\email{kate.ponto@uky.edu}
\author{Travis Wheeler}
\address{Kettering University}
\email{twheeler@kettering.edu}

\date{\today}

\begin{document}

\maketitle
\begin{abstract}
Group representations very comfortably admit a higher categorical generalization.  This includes induction and restriction for 2-representations that very directly parallel classical definitions.  An appropriate character theory for 2-representations has been more opaque. 

In this paper we draw from a range of motivations to provide an intuitive character theory for 2-representations.  It is built from the bicategorical trace developed to describe topological fixed point theory.  We use the secondary traces of Ben-Zvi and  Nadler which connect to  Riemann-Roch and Lefschetz-type theorems in the context of differential graded categories and Fourier-Mukai transforms.  Additional sources of motivation include  Lipmans' work on Grothendieck duality and, crucially, the Bartlett and Ganter--Kapranov approaches to characters.  (The second of these was in turn motivated by Hopkins, Kuhn and Ravenel's work on equivariant
Morava $E$-theories.)
Our characters are compatible with restriction and induction and, in cases that are sufficiently finite, admit a secondary character.  

\end{abstract}
\setcounter{tocdepth}{1}
\makeatletter
\def\l@subsection{\@tocline{2}{0pt}{2.5pc}{5pc}{}}
\makeatother

\tableofcontents

\section{Introduction}

While not the most common definition, a linear representation of a group $G$ is a functor 
\[BG\to \Vect_k\]
where $BG$ is the category with a single object and the group $G$ as morphisms and $\Vect_k$ is the category of vector spaces and linear maps over a field $k$.  The single object of $BG$ picks out a vector space and the elements of the group $G$ define endomorphisms of that vector space. Motivated by this perspective, if we make $\Vect_k(k,k)$ a category by giving it only identity morphisms, the character of a representation is a functor 
\begin{equation}\label{eq:1_character}\Lambda BG\to \Vect_k(k,k)\end{equation}
where $\Lambda BG$ has objects the elements of $G$ and a morphism $g_1\to g_2$ for each $h\in G$ such that $g_1h=hg_2$.  The functor in \eqref{eq:1_character} can be identified with the usual definition of the character by noting that a transformation $k\to k$ is the same as an element of $k$ and, since the target of the functor has only identity morphisms, conjugate elements of $G$ have the same image in $\Vect_k(k,k)$.

These are  elaborate definitions for familiar objects, but they have a couple of important benefits.  The first, as in \cite{elgueta,GK}, this definition of a representation admits an easy generalization to 2-categories: 
{\bf 2-representation} of $G$ in a bicategory $\mathcal{B}$
is a strong 2-functor 
\[BG \to \mathcal{B}\] where $BG$ is as above with the addition of identity 2-cells. 
Second,  in this perspective both a representation and its character are the same kind of object -- they are functors.  This allows us to treat them on more equal footing.  

In this paper we embrace this perspective and combine it with the theory of traces in bicategories \cite{p:thesis}.  The following is a specialization of a main result. 
\begin{thm*}
Let $\rho\colon BG\to \mathcal{B}$ be a 2-representation in a bicategory $\mathcal{B}$.  If $\mathcal{B}$ has a shadow 
(\cref{defn:shadow}), there is a functor $\mathcal{X}_\rho$, called the  {\bf character}, 
generalizing the classical character.

Additionally,
\[R_{\Lambda BG}^{\Lambda BH} \mathcal{X}_\rho=\mathcal{X}_{R_{BG}^{BH}} \rho
\quad \quad I_{\Lambda BH}^{\Lambda BG} \mathcal{X}_\rho\xrightarrow{\exists!}\mathcal{X}_{I_{BH}^{BG}} \rho
\quad \quad \mathcal{X}_{C_{BG}^{BH}} \rho \xrightarrow{\exists!}  C_{\Lambda BG}^{\Lambda BH} \mathcal{X}_\rho\]
where $R$, $I$, and $C$ are restriction, induction and coinduction.
\end{thm*}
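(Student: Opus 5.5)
We first build $\mathcal{X}_\rho$ and then check the three compatibilities. By the bicategorical trace formalism, a shadow $\sh{-}$ on $\mathcal{B}$ assigns to each 1-cell $Q\colon A\to A$ an object $\sh{Q}$ of a fixed category $\T$. It also supplies natural isomorphisms $\theta\colon \sh{Q\odot P}\cong \sh{P\odot Q}$, subject to the usual coherence with the associator and unitors.

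Write $\rho(\ast)=A$ and $\rho(g)\colon A\to A$. On objects, set $\mathcal{X}_\rho(g)=\sh{\rho(g)}$. For a morphism $h\colon g_1\to g_2$ in $\Lambda BG$, so that $g_1h=hg_2$, define $\mathcal{X}_\rho(h)$ as the composite
\[\sh{\rho(g_1)}\cong\sh{\rho(g_1)\odot\rho(h)\odot\rho(h^{-1})}\cong\sh{\rho(h^{-1})\odot\rho(g_1)\odot\rho(h)}\cong \sh{\rho(h^{-1}g_1h)}=\sh{\rho(g_2)}.\]
The first isomorphism uses the strong structure cells of $\rho$ and the unitors; the second is $\theta$; the third again uses the structure cells of $\rho$.

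Functoriality, namely $\mathcal{X}_\rho(hk)=\mathcal{X}_\rho(k)\circ\mathcal{X}_\rho(h)$ and preservation of identities, is a coherence check. We combine the shadow axioms relating $\theta$ to associativity with the coherence of the pseudofunctor $\rho$, and reduce the two composites to a single rotation via $\theta_{Q\odot P,R}=\theta_{P,R\odot Q}\circ\theta_{Q,P\odot R}$ (up to associators). When $\mathcal{B}$ is the bicategory of rings and bimodules and $\sh{M}=M/[R,M]$, a 2-representation on $k$ recovers a classical representation. In that case $\mathcal{X}_\rho$ is $g\mapsto\tr(\rho(g))$, and the morphisms of $\Lambda BG$ are sent to identities; this justifies the word ``generalizing''.

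For restriction along $BH\to BG$, note that $\Lambda$ is functorial. The equality $R\mathcal{X}_\rho=\mathcal{X}_{R\rho}$ is then immediate: both are the precomposition of $\sh{\rho(-)}$ with $\Lambda BH\to\Lambda BG$, and the morphism assignments agree because they are built from the same cells.

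For induction and coinduction, interpret $I$ and $C$ on the level of functors into $\T$ as left and right Kan extensions along $\Lambda BH\to\Lambda BG$. On the 2-representation side, $I\rho$ and $C\rho$ are defined as the corresponding bicategorical (op)lax adjoints of restriction. The comparison maps then come from universal properties.
\begin{itemize}
\item \emph{Induction.} By the Kan extension adjunction, a map $I\mathcal{X}_\rho\to\mathcal{X}_{I\rho}$ is the same as a natural transformation $\mathcal{X}_\rho\to R\mathcal{X}_{I\rho}=\mathcal{X}_{RI\rho}$; the equality here is the restriction result. That transformation is obtained by applying $\mathcal{X}$ to the unit $\rho\to RI\rho$.
\item \emph{Coinduction.} Dually, $\mathcal{X}_{C\rho}\to C\mathcal{X}_\rho$ corresponds to $R\mathcal{X}_{C\rho}=\mathcal{X}_{RC\rho}\to\mathcal{X}_\rho$, which is $\mathcal{X}$ applied to the counit.
\end{itemize}
Uniqueness ``$\exists!$'' then means uniqueness subject to compatibility with this (co)unit, which the adjunction supplies.

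The main obstacle is the step just used: a (co)unit, which is a transformation of 2-representations, must induce a natural transformation of characters. A transformation $\rho\to\rho'$ has 1-cell components $F\colon A\to A'$ and 2-cells $\rho'(g)\odot F\Rightarrow F\odot\rho(g)$, which give no direct map $\sh{\rho(g)}\to\sh{\rho'(g)}$. We will need $F$ to be right dualizable and form the twisted trace
\[\sh{\rho(g)}\to\sh{\rho(g)\odot F\odot F^*}\cong\sh{F^*\odot\rho(g)\odot F}\to\cdots\to\sh{\rho'(g)},\]
using the coevaluation, $\theta$, the given 2-cell, and the evaluation. Two things remain to be verified. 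First, naturality in $\Lambda BG$ must be checked using the functoriality of traces with respect to the shadow. Second, the component 1-cell of the (co)unit for induction and coinduction must actually be dualizable. I expect to handle the second by the explicit construction of $I\rho$ as an index-$[G:H]$ sum, or as a composite with a bimodule induced along $H\subseteq G$, that exhibits the needed dual.
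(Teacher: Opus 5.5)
Your plan for induction and coinduction is essentially the paper's. In \cref{thm:trace_induced}, a 0-cell $(\sigma,\zeta)$ of $(\rho\downarrow\iota^*)^d$ is sent to $(\mathcal{X}_\sigma,\tr(\zeta_-^{-1}))$. Here $\tr(\zeta_h^{-1})\colon\sh{\rho(h)}\to\sh{\sigma\iota(h)}$ is exactly your ``twisted trace'' of the inverted structure cell $\rho(h)\odot\zeta_\ast\Rightarrow\zeta_\ast\odot\sigma\iota(h)$. Initiality of $I_{\Lambda BH}^{\Lambda BG}\mathcal{X}_\rho$ then gives the unique map, which is your ``apply $\mathcal{X}$ to the unit and use the Kan extension property''.

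The genuine gap is the step you postpone: you cannot derive that the unit component $\zeta_\ast\colon\rho(\ast)\to I_{BH}^{BG}\rho(\ast)$ is dualizable. In an arbitrary bicategory with a shadow there are no sums, $[G:H]$ may be infinite, and $I_{BH}^{BG}\rho$ is characterized only as an initial 0-cell (it need not even exist). An ``index-$[G:H]$ sum'' description is therefore available only in special cases such as $2\Vect_k$ with finite index.

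The paper does not prove this dualizability either. Its precise statement, \cref{intro:induced}, assumes that $I_{BH}^{BG}\rho$ is a 0-cell of the subbicategory $(\rho\downarrow\iota^*)^d$ of \cref{def:conditions_for_induction}, where the components $\zeta_\ast$ (and $\psi_\ast$ for 1-cells) are required to be dualizable. The character is only shown to be a functor on that subbicategory; the introductory statement suppresses this hypothesis. Without it, $\tr(\zeta_h^{-1})$ is not defined and your argument produces no map. So you should impose dualizability of the (co)unit component as a hypothesis rather than plan to prove it.

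With that hypothesis, the naturality you still owe is \cref{lem:0:cell:functor}. It needs the general composition property of bicategorical traces \cite[7.5]{Ponto_2012}, applied to the composition axiom of the transformation, because $\zeta_\ast$ is not invertible. The hands-on coherence argument you used for $\mathcal{X}_\rho$ relies on $\rho(h)$ being an equivalence with dual $\rho(h^{-1})$, so it does not carry over.

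Separately, your justification of ``generalizing'' does not work. Take a strong 2-functor $BG\to\Mod/\Ring$ with $\rho(\ast)=k$. Every $\rho(g)$ is an invertible $k$-bimodule, since $\rho(g)\otimes_k\rho(g^{-1})\cong k$, hence of rank one. This does not encode a classical representation. Moreover, the maps attached to relations $gh=hk$ need not be identities. If all $\rho(g)=k$ and the $\phi_{g,h}$ are given by a 2-cocycle $\alpha$, a commuting pair acts on $\sh{\rho(g)}=k$ by a ratio of values of $\alpha$, which can be $-1$. A classical value $\tr(\rho(g))$ is the trace of a 2-cell, not the shadow of a 1-cell. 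The paper's sense of ``generalizing'' is only the parallel with \cref{intro:classical}, together with the fact that the bicategorical trace reduces to the Dold--Puppe trace for a symmetric monoidal category viewed as a one-object bicategory.

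The rest is sound. Your direct formula for $\mathcal{X}_\rho(h)$ agrees with the paper's $\chi\circ\Lambda\rho$: take $\rho(h^{-1})$ as the dual of $\rho(h)$ and use naturality of $\theta$. Your restriction argument is the paper's.
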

This is remarkably parallel to the classical results but with additional richness reflecting the higher categorical level.  It is compatible with, but more general than, the approach of \cite{GK}.

Very reassuringly, one very direct illustration of  the greater richness of 2-representations is in their characters.  The character for a 2-representation is  a functor from $\Lambda BG$, but  the target can have non identity morphisms.  The character is no longer conjugation invariant in the strict sense, but  there is a morphisms between the images of  conjugate elements.  In particular, when the character is valued in a category where the (symmetric monoidal) trace \cite{Dold1978-ol} is defined, the character of this character is defined.  As described in \cite{BZN,GK,HKR}, this is very much an expected phenomenon and the inevitability of its evolution in this approach is very satisfying.

\begin{thm*}
Let $\rho\colon BG\to \mathcal{B}$ be a 2-representation in a bicategory $\mathcal{B}$ with a shadow taking values in a symmetric monoidal category. If the images of $\rho(g)$ are sufficiently finite in the shadow target (\cref{def:repdual}), there is a {\bf secondary character} $\mathcal{X}_{\mathcal{X}_\rho}$ whose domain is pairs of commuting elements of $G$ up to simultaneous conjugation.  

Additionally, 
\[R_{\Lambda(\Lambda BG)}^{\Lambda(\Lambda BH)} \mathcal{X}_{\mathcal{X}_\rho}=\mathcal{X}_{\mathcal{X}_{R_{BG}^{BH}} \rho}
\quad \quad I_{\Lambda(\Lambda BH)}^{\Lambda(\Lambda BG)} \mathcal{X}_{\mathcal{X}_\rho}=\mathcal{X}_{\mathcal{X}_{I_{BH}^{BG}} \rho}
\quad \quad  C_{\Lambda(\Lambda BH)}^{\Lambda(\Lambda BG)} \mathcal{X}_{\mathcal{X}_\rho}=\mathcal{X}_{\mathcal{X}_{C_{BH}^{BG}} \rho}\]
for restriction, induction, and coinduction.
\end{thm*}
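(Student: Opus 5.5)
We build the secondary character from the primary one and then transport the three compatibilities from the first theorem one level up. By the first theorem, $\mathcal{X}_\rho\colon \Lambda BG\to \T$ is a functor into the shadow target $\T$, which is now a symmetric monoidal category. The objects of $\Lambda BG$ are elements $g$, and $\mathcal{X}_\rho(g)=\sh{\rho(g)}$. A morphism $h\colon g_1\to g_2$ with $g_1h=hg_2$ is sent to the composite of the 2-cell $\rho(g_1)\odot\rho(h)\cong\rho(h)\odot\rho(g_2)$ with the cyclicity isomorphism of the shadow.

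The finiteness hypothesis (\cref{def:repdual}) says that each $\sh{\rho(g)}$ is dualizable in $\T$. So for each endomorphism $h\colon g\to g$, that is, each $h$ commuting with $g$, the Dold--Puppe trace $\tr(\mathcal{X}_\rho(h))\in\T(\mathbb{1},\mathbb{1})$ is defined. We define $\mathcal{X}_{\mathcal{X}_\rho}$ on $\Lambda(\Lambda BG)$, whose objects are pairs $(g,h)$ with $gh=hg$, by $(g,h)\mapsto \tr(\mathcal{X}_\rho(h))$. This is exactly the first theorem's construction applied to the functor $\mathcal{X}_\rho\colon\Lambda BG\to\T$, viewing $\T$ as a bicategory with one object, composition $\otimes$, and shadow the identity.

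Morphisms in $\Lambda(\Lambda BG)$ are elements $k$ conjugating both entries simultaneously. Cyclicity of the symmetric monoidal trace, $\tr(f^{-1}\circ \phi\circ f)=\tr(\phi)$, shows that such $k$ are sent to identities. So the domain may be taken to be commuting pairs up to simultaneous conjugation, which gives the first claim.

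For the compatibilities, restriction is immediate. Restricting along $\Lambda\Lambda BH\to\Lambda\Lambda BG$ commutes with taking traces objectwise, and the first theorem gives $R\mathcal{X}_\rho=\mathcal{X}_{R\rho}$. Applying $\tr(\mathcal{X}_{(-)}(h))$ to both sides yields the equality.

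For induction, the first theorem only supplies a comparison map $I\mathcal{X}_\rho\to\mathcal{X}_{I\rho}$. The plan is to take traces of this natural map and show that equality holds after the trace. Concretely, $(I\rho)(g)$ is a sum (or colimit) over cosets $G/H$ indexed so that $\sh{(I\rho)(g)}$ decomposes as $\bigoplus_{xH\,:\,g\in xHx^{-1}}\sh{\rho(x^{-1}gx)}$, plus off-diagonal terms that do not survive the shadow. An endomorphism $h$ permutes these cosets. By additivity of the trace for block-permutation maps, only the cosets fixed by both $g$ and $h$ contribute, and each contributes $\tr(\mathcal{X}_\rho(x^{-1}hx))$ at the pair $(x^{-1}gx,x^{-1}hx)$. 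This is precisely the formula for induction along $\Lambda\Lambda BH\to\Lambda\Lambda BG$ into a target that has only identity morphisms, where the left Kan extension reduces to a sum over the fibres of the comma category. Coinduction is dual: the comparison map there runs the other way, and we use finiteness of $G/H$ so that products agree with sums.

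The main obstacle is the induction/coinduction step. We must verify that the comparison maps of the first theorem, which are not isomorphisms, become equalities after the symmetric monoidal trace. This requires additivity of traces in $\T$ and an explicit coset decomposition of $\sh{(I\rho)(g)}$, in which off-diagonal summands either vanish in the shadow or contribute zero trace. The first thing to try is to write $I\rho$ as a finite biproduct over $G/H$ in $\B$ and use that the shadow and the trace both respect biproducts.
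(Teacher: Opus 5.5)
Your construction of $\mathcal{X}_{\mathcal{X}_\rho}$ as $(g,h)\mapsto \tr(\mathcal{X}_\rho(h))$, its invariance under simultaneous conjugation, and the restriction identity agree with \cref{def:joint_character} and with the paper's proof. One small correction: the construction is \cref{def:character_early} applied to the functor $\Lambda BG\to\mathbf{T}$. A one-object bicategory would put the objects $\sh{\rho(g)}$ among the 1-cells, so that framing is at the wrong level.

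The gap is induction and coinduction. You leave these as a plan, and the plan cannot prove the statement as posed. It needs biproducts in $\B$, additivity of shadows and traces in $\mathbf{T}$, finiteness of $G/H$, and an explicit coset model of $I_{BH}^{BG}\rho$. None of these is a hypothesis. In the paper $I_{BH}^{BG}\rho$ is only the initial 0-cell of $\rho\downarrow\iota^*$ (\cref{def:induced_rep}), and a general bicategory with shadow gives no formula for it. Moreover, at the secondary level you treat $I_{\Lambda(\Lambda BH)}^{\Lambda(\Lambda BG)}$ as a left Kan extension that sums over fibres. In the statement it is the initial object of a comma category of functors into the discrete category $\mathbf{T}(S,S)$, where no sums are available. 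So even a successful coset computation would prove a Mackey/HKR-type formula, not the identity claimed.

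The missing idea is to argue through the universal properties instead of computing. The paper proceeds in three steps.
\begin{enumerate}
\item Take the unique 1-cell $I_{\Lambda BH}^{\Lambda BG}\mathcal{X}_\rho\to\mathcal{X}_{I_{BH}^{BG}\rho}$ from \cref{intro:induced}.
\item Apply the character construction one level up, that is, the functor of \cref{thm:intro:induced_classical} with $C=\mathbf{T}$ and $\Lambda B\iota$ in place of $B\iota$. Since the target comma category lies over the discrete $\mathbf{T}(S,S)$, this 1-cell becomes an equality $\mathcal{X}_{I\mathcal{X}_\rho}=\mathcal{X}_{\mathcal{X}_{I\rho}}$.
\item Apply \cref{thm:intro:induced_classical} again to get $I_{\Lambda(\Lambda BH)}^{\Lambda(\Lambda BG)}\mathcal{X}_{\mathcal{X}_\rho}=\mathcal{X}_{I\mathcal{X}_\rho}$.
\end{enumerate}
Coinduction is dual.

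Your worry that a non-invertible comparison map need not preserve traces is well placed. In the paper's argument it sits inside the phrase ``morphisms become equalities''. By \cref{lem:treq} (here just cyclicity of the Dold--Puppe trace), that step is safe when the components of the comparison 1-cell are isomorphisms in $\mathbf{T}$. Being an isomorphism is exactly what dualizability of a 1-cell in the locally discrete $\T$ means, and that is the condition \cref{def:conditions_for_induction} imposes one level up. So what you need to establish is the invertibility of that comparison 1-cell, checked directly. A coset decomposition is neither available in this generality nor aimed at the paper's definitions of $I$ and $C$.
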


As mentioned above, this paper is motivated by the greater generality that the trace in bicategories from \cite{p:thesis} can bring to the theory of characters developed by Bartlett in \cite{Bartlett} and  Ganter and Kapranov in \cite{GK}.  As a result, this paper shares motivation with \cite{GK} including the character theory of Hopkins, Kuhn and Ravenel \cite{HKR} defined on tuples of commuting elements.  As with \cite{GK}, our secondary character is defined on pairs of commuting elements and we share their belief that higher categorical analogs are a likely source of characters defined on longer tuples of commuting elements.

In contrast to \cite{GK}, our approach is more flexible about the kind of traces used to define characters. 
The main construction in \cite{GK} is called the {\bf categorical trace} and it is a construction on 1-cells in a bicategory.  Working on an unrelated project \cite{barhite2023bicategorical}, Barhite noticed that the categorical trace is an example of his coshadow.   
Since we define characters using traces \cite{p:thesis} and cotraces \cite{barhite2023bicategorical}, Barhite's observation implies that Ganter and Kapranov's characters are an example of the character defined here.  However, we define characters that are not accessible using the categorical trace.  In particular, we can define characters in terms of Hochschild and topological Hochschild homology.

Another important point of difference between this paper and \cite{GK}, is in the descriptions of  induced representations for the 2-category of categories, functors and natural transformations. We give a more general definition of induced representation, but describe the compatibility between induction and charcters via a unique natural map rather than the equality of the construction in \cite{GK}.

\subsection*{Outline} In \cref{intro:classical} we recall fundamental results in classical representation theory rewritten to foreshadow our generalizations to 2-representations.  \cref{sec:shadows} recalls necessary preliminaries on traces in 2-categories.  In \cref{sec:main} we state the main results of this paper postponing the proofs to \cref{sec:remnant,sec:remnant_functors,sec:traces}. \cref{sec:coshadows} describes how these results apply to the case of coshadows and cotraces.  In the last section, \cref{sec:induction}, we define induction and coinduction for 2-representations and prove compatibility between  character and induction (and coinduction).

\subsection*{Acknowledgments} The authors thank Justin Barhite, Niles Johnson, and  Jordan Sawdy for helpful conversations.

 Both authors were partially supported by NSF grant DMS-2052905.  The first author was also partially supported by NSF grant  DMS-2404503.

 \section{Classical representations}\label{intro:classical}
 To ease the transition to 2-categories we start by recalling (and rephrasing) the relevant parts of classical representation theory so that the 2-categorical analogs will be fairly direct translations (with more elaborate diagrams).  
 
 A linear representation of a group $G$ is a functor 
 \[BG\to \Vect_k\]
 where 
 $BG$ is the category with a single object and $G$ as morphism. 
Our first generalization is the following.
 \begin{defn}\label{defn:classical_rep} Let $G$ be a group and $C$ a category. A $G$-\textbf{representation} in $C$ is a functor \[\rho: BG \to C.\]
\end{defn}

This is too general to define characters.  For that we need a notion of trace for the category $C$ and so we restrict to symmetric monoidal categories and use Dold and Puppe's approach to trace \cite{Dold1978-ol}.

\begin{defn}[Compare to \cref{defn:dual}]\label{defn:dual_early} \cite[1.2]{Dold1978-ol}
An object $X$ in a  monoidal category $(C, \wedge, S)$ is {\bf dualizable} if there is an object $Y$ and morphisms 
$\eta\colon S\to X\wedge Y$ and $\epsilon \colon Y\wedge X\to S$ so that the triangle diagrams commute.
\end{defn}

 In $\Vect_k$ the dualizable objects are the finite dimensional vector spaces.

Classically, the character is a class function from the group to the ground field.  It will be more convenient for us to replace this by a functor whose target category is appropriately trivial.  The first step in this process is the following proposition. 
\begin{prop}[Compare to \cref{Lambda_bicat}]\label{Lambda_bicat_early}
For a  monoidal category $C$, there is a category $\Lambda C$  whose
\begin{itemize}
    \item objects are pairs $(A,f)$ where $A$ is a dualizable object of $C$ and $f$ is an endomorphisms of $A$, and 
    \item a morphism $\psi\colon (A,f)\to (B,g)$ is a isomorphism $\psi\colon A\to B$ in $C$ so that the diagram 
\[\xymatrix{A\ar[r]^f\ar[d]_\psi&A\ar[d]^\psi\\
B\ar[r]^-g&B}\]
commutes.
\end{itemize}  
\end{prop}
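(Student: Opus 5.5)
We need to check that these data form a category. The plan is to define identities and composition in the obvious way and then verify the category axioms by reducing everything to the corresponding facts in $C$.

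\emph{Identities.} For an object $(A,f)$, take the identity morphism to be $\id_A$. It is an isomorphism, and the square commutes since $\id_A\circ f = f = f\circ \id_A$.

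\emph{Composition.} Given $\psi\colon (A,f)\to (B,g)$ and $\phi\colon (B,g)\to (D,h)$, define the composite to be the composite $\phi\circ\psi$ in $C$. This is again an isomorphism. Pasting the two commuting squares gives
\[(\phi\circ\psi)\circ f = \phi\circ g\circ \psi = h\circ(\phi\circ\psi),\]
so $\phi\circ\psi$ is a morphism $(A,f)\to (D,h)$.

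\emph{Axioms.} Associativity and the unit laws hold because morphisms of $\Lambda C$ are particular morphisms of $C$, and composition and identities are inherited from $C$. Hence the axioms follow from those of $C$.

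The objects are required to be dualizable, which is a property of $A$ and not extra structure. So the object class is well defined, and no choice of dual $Y$ or of the maps $\eta,\epsilon$ needs to be carried along. The monoidal structure plays no role in this verification; it only singles out which objects are allowed, and becomes relevant later when traces are used to define functors out of $\Lambda C$.

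There is no genuine obstacle. The only step needing care is the composition check, which is the one-line pasting of commuting squares above.
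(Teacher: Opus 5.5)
Your proof is correct and matches the paper's approach: the paper simply notes that composition in $C$ defines composition in $\Lambda C$. Its bicategorical analogue (\cref{LambdaBPQ_Cat}) likewise takes identities to be identities, composes by stacking the commuting squares, and inherits associativity and unitality from the ambient category.
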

  Composition in $C$ defines composition for morphisms in $\Lambda C$.  If $F\colon C\to D$ is a functor, there is an induced functor $\Lambda F\colon \Lambda C\to \Lambda D$ defined by applying $F$. (Compare to \cref{lambda_of_a_composite}.)

  \begin{ex}
      For a group $G$, note that $BG$ is not monoidal, but we can extend this definition by dropping  the dualizability assumption.  In that case, the objects of $\Lambda BG$ are the elements of $G$.  The morphisms $g\to k$ are elements $h$ of $G$ so that $gh=hk$.
  \end{ex}
  In particular, the isomorphism classes of objects in $\Lambda BG$ correspond to conjugacy classes of elements of $G$.
  
  \begin{defn}[Compare to \cref{defn:bicat_trace}]\label{defn:bicat_trace_early}\cite[4.1]{Dold1978-ol}
If $f\colon X\to X$ is an endomorphism of a dualizable object in a symmetric monoidal category $C$, the {\bf trace}  of $f$, $\tr(f)$,  is the composite 
\[S\xrightarrow{\eta} X\wedge Y\xrightarrow{f\wedge \id}X\wedge Y\cong Y\wedge X\xrightarrow{\epsilon}S.\]  
\end{defn}
In $\Vect_k$ this is the usual definition in terms of sum of diagonal entries of a matrix representation. 

\begin{prop}[Compare to \cref{thm:character}]\label{thm:character_early} Let $C(S,S)$ be the endomorphisms of the unit object of $C$ regarded as a category with only identity morphisms.
If $C$ is symmetric monoidal, there is a functor 
\[\chi\colon \Lambda C\to C(S,S)\]
defined on objects by 
$\chi(A, f)=\tr(f).$
\end{prop}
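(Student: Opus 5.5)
Since $C(S,S)$ is regarded as a category with only identity morphisms, a functor $\Lambda C\to C(S,S)$ amounts to an assignment on objects that is constant along every morphism. Functoriality (identities and composition) is then automatic. So the plan is to check that $\chi(A,f)=\tr(f)$ is a well-defined element of $C(S,S)$ for each object, and that whenever there is a morphism $\psi\colon (A,f)\to (B,g)$ in $\Lambda C$ we have $\tr(f)=\tr(g)$.

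Well-definedness on objects is the first step. The trace in \cref{defn:bicat_trace_early} is built from a choice of dual $(Y,\eta,\epsilon)$. The standard argument shows that any two duals are related by a unique isomorphism compatible with the coevaluations and evaluations, built from $\eta,\epsilon,\eta',\epsilon'$. Transporting along this isomorphism, the two trace composites agree by naturality of the symmetry and the triangle identities. Hence $\tr(f)$ depends only on $f$.

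The main step is invariance under morphisms. Given an isomorphism $\psi\colon A\to B$ with $\psi f=g\psi$, we have $g=\psi f\psi^{-1}$. I would prove the cyclicity property $\tr(\alpha\beta)=\tr(\beta\alpha)$ for $\alpha\colon A\to B$ and $\beta\colon B\to A$ between dualizable objects. Taking $\alpha=\psi$ and $\beta=f\psi^{-1}$ then gives $\tr(g)=\tr(\psi f\psi^{-1})=\tr(f\psi^{-1}\psi)=\tr(f)$.

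Since $\psi$ is an isomorphism, it may be simpler to transport the dual directly rather than prove full cyclicity. Let $(Y,\eta,\epsilon)$ be a dual for $A$. Then $B$ has dual $Y$ with coevaluation $(\psi\wedge\id)\eta$ and evaluation $\epsilon(\id\wedge\psi^{-1})$, and the triangle identities follow immediately from those for $A$. Computing $\tr(g)$ with this dual gives
\[
\epsilon(\id\wedge\psi^{-1})\,\gamma\,(g\psi\wedge\id)\,\eta ,
\]
where $\gamma$ is the symmetry. Substituting $g\psi=\psi f$ and using naturality of $\gamma$ to move $\psi$ past the swap, we get $\psi^{-1}\psi=\id$, which leaves exactly $\tr(f)$.

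The only real obstacle is the bookkeeping in the first step, the independence of the choice of dual. I would handle it with the same transport-of-dual trick: an isomorphism of duals $Y\cong Y'$ compatible with $\eta,\epsilon$ cancels in the trace composite by naturality of $\gamma$. Combining the two steps, $\chi$ is constant on morphisms and hence defines the required functor.
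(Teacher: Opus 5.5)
Your proposal is correct and takes essentially the paper's approach. The paper gives this proposition only a one-line justification (morphisms of $\Lambda C$ induce identities between traces); its detailed argument appears in the bicategorical setting as \cref{lem:treq}, which is exactly your argument: transport the dual of $A$ along the isomorphism $\psi$ and appeal to independence of the choice of dual, which the paper cites from \cite[4.5.2]{p:thesis} rather than reproving.
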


The morphisms in \cref{Lambda_bicat_early} induce identity maps between the corresponding traces and so this definition on objects extends to morphisms.

\begin{defn}[Compare to \cref{lab:def:character}]\label{def:character_early}
    If $C$ is a symmetric monoidal category, the \textbf{character} of a representation $\rho\colon BG\to C$ where $\rho(\ast)$ is dualizable is the composite
    \[\Lambda (BG)\xto{\Lambda (\rho)}\Lambda (C)\xto{\chi} C(S,S).\]
\end{defn}
Mimicking classical notation, but not overloading $\chi$, we will write $\mathcal{X}_\rho$ for the character of a representation $\rho$.

\begin{ex}
    Let $\rho: BG \to \Vect_k$ be a linear representation where $\rho(\ast) = V$ is finite dimensional. Then  
    \[\Lambda \rho: \Lambda BG \to \Lambda \Vect_k\] 
    is given on objects by $(\Lambda\rho)(\ast, g) = (V,\rho(g))$.  A morphism $h$ from $g$ to $k$ is taken to $\rho(h)$ where 
    \begin{equation}\label{eq:1_cat_conj_inv}\xymatrix{V \ar[r]^{\rho(g)} \ar[d]_{\rho(h)} & V \ar[d]^{\rho(h)}\\
    V \ar[r]_{\rho(k)} & V}\end{equation}
    commutes. The character of $\rho$ takes $(V,g)$ to $\tr(\rho(g))$.  Since 
the diagram in \eqref{eq:1_cat_conj_inv} implies $\tr(\rho(g))=\tr(\rho(k))$, this defines a functor with domain $\Lambda BG$ and the character is conjugation invariant. 
\end{ex}

\begin{ex}[Compare to \cref{sec:restriction}]
\label{ex:class_restriction}
    If $\iota\colon H\to G$ is a group homomorphism, the {\bf restriction} of $\rho\colon BG\to C$, $R_{BG}^{BH}(\rho)$,  is the composite 
    \[BH\xto{B\iota}BG\xto{\rho}C.\]
    The character of the restriction of $\rho$ is 
    \[\Lambda BH\xto{\Lambda (\rho\circ B\iota)} \Lambda C\xto{\chi} C(S,S).\]
    Since $\Lambda(\rho\circ B\iota)=\Lambda(\rho)\circ \Lambda (B\iota)$ the character of the restriction is $\Lambda B\iota$ composed with the character of $\rho$. In particular, 
    \[ \mathcal{X}_{R_{BG}^{BH}(\rho)}=R_{\Lambda BG}^{\Lambda BH}(\mathcal{X}_{\rho}).\]
\end{ex}

    Induction is more elaborate to define.  For a group homomorphism $\iota\colon H\to G$ and an $H$-representation $\rho\colon BH\to C$, consider the under category 
    $\rho\downarrow (B\iota)^*$.  (Here we abuse notation and use $\rho$ to denote  the functor as in the following diagram.)
    % https://q.uiver.app/#q=WzAsMyxbMCwxLCJcXEZ1bihCRyxDKSJdLFsxLDEsIlxcRnVuKEJILEMpIl0sWzEsMCwiXFxhc3QiXSxbMCwxLCIoQlxcaW90YSleKiIsMl0sWzIsMSwiXFxyaG8iXV0=
\[\begin{tikzcd}
	& \ast \\
	{\Fun(BG,C)} & {\Fun(BH,C)}
	\arrow["\rho", from=1-2, to=2-2]
	\arrow["{(B\iota)^*}"', from=2-1, to=2-2]
\end{tikzcd}\]  The initial object of this category, denoted $I_{BH}^{BG}\rho$, is the usual {\bf induced representation} of $\rho$.  Note also that this definition applies to categories other than $BH$ and $BG$ and any functor between them -- so we can induce characters in addition to representations.

Focusing on the case of  $\rho\downarrow (B\iota)^*$, 
\begin{itemize}
\item the objects are pairs $(\phi, \theta)$ where $\phi$ is a functor $BG\to C$ and $\theta$ is a natural transformation from $\rho$ to 
\[BH\xrightarrow{B\iota}BG\xrightarrow{\phi}C.\]  
\item A morphism  $(\phi, \theta) \to  (\phi', \theta')$ is a natural transformation $\psi\colon \phi\to \phi'$ so that 
% https://q.uiver.app/#q=WzAsMyxbMSwxLCJDIl0sWzEsMCwiQkgiXSxbMCwxLCJCRyJdLFsxLDAsIlxccmhvIl0sWzIsMCwiXFxwaGkiLDEseyJjdXJ2ZSI6LTF9XSxbMSwyLCJCXFxpb3RhIiwyLHsiY3VydmUiOjJ9XSxbMiwwLCJcXHBoaSciLDIseyJjdXJ2ZSI6Mn1dLFsxLDIsIlxcdGhldGEiLDIseyJzaG9ydGVuIjp7InNvdXJjZSI6MjAsInRhcmdldCI6MjB9LCJsZXZlbCI6Mn1dLFs0LDYsIlxccHNpIiwwLHsic2hvcnRlbiI6eyJzb3VyY2UiOjIwLCJ0YXJnZXQiOjIwfX1dXQ==
\[\begin{tikzcd}
	& BH \\
	BG & C
	\arrow["{B\iota}"', curve={height=12pt}, from=1-2, to=2-1]
	\arrow["\theta"', shorten <=4pt, shorten >=4pt, Rightarrow, from=1-2, to=2-1]
	\arrow["\rho", from=1-2, to=2-2]
	\arrow[""{name=0, anchor=center, inner sep=0}, "\phi"{description}, curve={height=-6pt}, from=2-1, to=2-2]
	\arrow[""{name=1, anchor=center, inner sep=0}, "{\phi'}"', curve={height=12pt}, from=2-1, to=2-2]
	\arrow["\psi", shorten <=2pt, shorten >=2pt, Rightarrow, from=0, to=1]
\end{tikzcd}\]
is $\theta'$.  
\end{itemize}
Let $(\rho\downarrow (B\iota)^*)^d$ be the full subcategory of $\rho\downarrow (B\iota)^*$ whose objects are pairs $(\phi,\theta)$ where $\phi(\ast)$ is dualizable.

\begin{thm}[Compare to \cref{intro:induced}]\label{thm:intro:induced_classical}
    The character extends to a functor 
    \begin{equation}\label{eq:smc_character_functor}(\rho\downarrow (B\iota)^*)^d\to (\mathcal{X}_\rho
    \downarrow (\Lambda B\iota)^*).\end{equation}
If  $I_{BH}^{BG}(\rho)\in (\rho\downarrow (B\iota)^*)^d$, then 
    \[I_{\Lambda BH}^{\Lambda BG}(\mathcal{X}_\rho)=
    \mathcal{X}_{
    I_{BH}^{BG}(\rho)}.\]
\end{thm}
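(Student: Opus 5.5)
First I will construct the functor in \eqref{eq:smc_character_functor}. An object $(\phi,\theta)$ of $(\rho\downarrow (B\iota)^*)^d$ consists of a functor $\phi\colon BG\to C$ with $\phi(\ast)$ dualizable and a natural transformation $\theta\colon \rho\Rightarrow \phi\circ B\iota$. The component $\theta_\ast\colon \rho(\ast)\to\phi(\ast)$ satisfies $\theta_\ast\rho(h)=\phi(\iota h)\theta_\ast$ for every $h\in H$. I will send $(\phi,\theta)$ to the pair $(\mathcal{X}_\phi,\Theta)$. By \cref{ex:class_restriction}, $\mathcal{X}_\phi\circ\Lambda B\iota=\mathcal{X}_{\phi\circ B\iota}$, so $\Theta$ has to be a natural transformation $\mathcal{X}_\rho\Rightarrow\mathcal{X}_{\phi\circ B\iota}$ of functors $\Lambda BH\to C(S,S)$. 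Since $C(S,S)$ has only identity morphisms, $\Theta$ exists exactly when $\tr(\rho(h))=\tr(\phi(\iota h))$ for all $h$, and then it is unique (it is the identity). This also requires $\rho(\ast)$ to be dualizable, which I read as part of the hypothesis, since $\mathcal{X}_\rho$ must be defined. A morphism $\psi$ in the source will go to the corresponding (identity) transformation $\mathcal{X}_\phi\Rightarrow\mathcal{X}_{\phi'}$; functoriality is automatic because all morphisms in the target are identities.

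The key step is the trace identity $\tr(\rho(h))=\tr(\phi(\iota h))$. It does not follow from naturality of $\theta$ unless $\theta_\ast$ is invertible, because the diagram in \eqref{eq:1_cat_conj_inv} used isomorphisms. My plan is to treat this as the main obstacle and try two routes.

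The first route is to check whether the intended reading is that the character functor lands in the comma category built from the non-strict transformations obtained by the lax functoriality of traces. In a general symmetric monoidal category, a map $\theta\colon A\to B$ between dualizable objects with $\theta f=g\theta$ does not force $\tr f=\tr g$. For this reason I expect the honest statement to need $\theta$ invertible, or $C(S,S)$ to be read via \cref{Lambda_bicat_early}, where morphisms are isomorphisms.

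The second route, which I will pursue for the equality $I_{\Lambda BH}^{\Lambda BG}(\mathcal{X}_\rho)=\mathcal{X}_{I_{BH}^{BG}(\rho)}$, is to use that the functor from the first paragraph preserves initial objects. I will compute $I_{\Lambda BH}^{\Lambda BG}(\mathcal{X}_\rho)$ as a left Kan extension along $\Lambda B\iota$ into the discrete category $C(S,S)$. This amounts to checking that for every object $(\chi',\Theta')$ of $\mathcal{X}_\rho\downarrow(\Lambda B\iota)^*$ there is a unique map from the image of the initial object $(I\rho,\eta)$. Uniqueness is automatic, since the target has only identity morphisms. For existence I need $\chi'(g)=\tr(I\rho(g))$ for every $g$. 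For this I will use the classical Frobenius formula: $I\rho(\ast)\cong\bigoplus_{G/H}\rho(\ast)$, and the trace of $I\rho(g)$ is the sum of the traces of $\rho(k^{-1}gk)$ over the fixed cosets. Together with the objects hit by $\Lambda B\iota$, this pins down $\chi'$ wherever $\chi'$ is constrained, and it identifies the initial object on both sides.
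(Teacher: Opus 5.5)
Your analysis of the object assignment is right, and the paper's own proof passes over it. The paper sends $(\phi,\theta)$ to $(\mathcal{X}_\phi,\chi\circ\Lambda\theta)$, but $\Lambda\theta$ only makes sense when $\theta_\ast$ is invertible, because the morphisms of $\Lambda C$ in \cref{Lambda_bicat_early} are isomorphisms. Invertibility of $\theta_\ast$ is the faithful classical counterpart of asking $\zeta_c$ to be dualizable in \cref{def:conditions_for_induction}: in a category viewed as a locally discrete bicategory, the dualizable 1-cells are exactly the isomorphisms. The classical $(\rho\downarrow(B\iota)^*)^d$ only asks that $\phi(\ast)$ be dualizable.

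Having conceded this, though, you never construct the functor, and your second route, which is meant to give the equality, cannot work. Write $I\rho$ for $I_{BH}^{BG}\rho$ and $V=\rho(\ast)$. The route needs $(I\rho,\eta)$ to have an image, i.e.\ $\tr(\rho(h))=\tr(I\rho(\iota h))$ for all $h\in H$. But the unit $\eta_\ast\colon V\to\bigoplus_{G/H}V$ is precisely a non-invertible $\theta$ of the kind you flagged, and the Frobenius formula you cite gives $[G:H]\dim V$ at $h=e$. Take $H$ trivial, $G=\mathbb{Z}/2$ and $\rho$ the trivial representation on $k$. Then $\tr(I\rho(e))=2\neq 1=\tr(\rho(e))$, so $\mathcal{X}_{I\rho}$ is not even an object of $\mathcal{X}_\rho\downarrow(\Lambda B\iota)^*$.

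The initiality step fails independently. Since $C(S,S)$ is discrete, $\mathcal{X}_\rho\downarrow(\Lambda B\iota)^*$ has only identity morphisms, so an object is initial only if it is the unique object. Its objects are all class functions on $G$ agreeing with $\mathcal{X}_\rho$ along $\iota$. These are unconstrained on conjugacy classes that miss $\iota(H)$, such as the non-identity element in the example. So pinning $\chi'$ down ``wherever it is constrained'' does not give a map to every object. Correspondingly, the left Kan extension into $C(S,S)$ does not exist: those classes would require an empty colimit, i.e.\ an initial object of a discrete category with more than one object. Colimits in a discrete category also cannot produce the sums in a Frobenius formula. Your claim that the functor preserves initial objects is likewise unsupported.

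By contrast, the paper's argument for the equality is purely formal. Granting the functor and the existence of $I_{\Lambda BH}^{\Lambda BG}(\mathcal{X}_\rho)$, initiality gives a morphism $I_{\Lambda BH}^{\Lambda BG}(\mathcal{X}_\rho)\to\mathcal{X}_{I\rho}$, and discreteness forces it to be an identity. It never computes $\mathcal{X}_{I\rho}$, whereas you try to prove the stronger, false claim that $\mathcal{X}_{I\rho}$ is itself initial.

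If you add invertibility of $\theta_\ast$ as a hypothesis, the formal argument does go through. But then $(I\rho,\eta)$ qualifies only when $\eta_\ast$ is invertible, which fails whenever $\iota$ is injective, $[G:H]>1$ and $V\neq 0$. So your objection correctly shows that the statement needs repair, but the proposal as written establishes neither the statement nor a corrected version of it.
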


The dual statement for coinduction also holds. 

\begin{proof}

The objects of $(\mathcal{X}_\rho
    \downarrow (\Lambda B\iota)^*)$ are pairs $(\varphi, \vartheta)$ where $\varphi$ is a functor $\Lambda BG\to C(S,S)$ and $\vartheta$ is a natural transformation from $\mathcal{X}_\rho$ to 
\[\Lambda BH\xrightarrow{\Lambda B\iota}\Lambda BG\xrightarrow{\varphi}C(S,S).\]   Since $C(S,S)$ only has identity morphisms, $\vartheta$ reduces to equalities.  

The image of a pair $(\phi,\theta)$ under \eqref{eq:smc_character_functor} is $(\mathcal{X}_\phi, \chi\circ (\Lambda \theta) )$.  Note that the restriction on $\rho\downarrow (B\iota)^*$ implies  $\mathcal{X}_\phi$ is defined.  The image of a morphism is similar. 

The last statement follows from the observation that there is an induced map     \[I_{\Lambda BH}^{\Lambda BG}(\mathcal{X}_\rho)\to 
    \mathcal{X}_{
    I_{BH}^{BG}(\rho)}\]
    but all maps of this form must be identities since $C(S,S)$ only has identity maps.
\end{proof}
In particular, this recovers the classical statement about characters of induced representations for pairs of finite groups since the induced representation will be finite dimensional.

If we replace under categories by over categories we recover the corresponding classical statements about coinduction.

Stated very informally, the project of this paper is the following.
\begin{quote}
    \emph{There is a theory of characters, restriction, induction and conduction for 2-representations that very precisely parallels the story above.}
\end{quote}

\section{Shadows and traces in bicategories}\label{sec:shadows}

To extend the results of the previous section to  2-representations we recall the theory of  traces in bicategories with shadows \cite{p:thesis,Ponto_2012}.  This is a generalization of  Dold and Puppe's  trace \cite{Dold1978-ol} used in \cref{intro:classical}.

Let $\mathcal{B}$ be a bicategory with bicategorical composition denoted $\odot$ and unit one cells denoted $U_A$.  See \cite{leinster} for the definition of a bicategory and associated structure. 

\begin{defn}[{Compare to \cref{defn:dual_early}}] \cite[16.4.1]{may2004parametrized}\label{defn:dual}
    A 1-cell $M$ of $\mathcal{B}$ from $A$ to $B$ is \textbf{(right) dualizable} if there exists an object $M^\ast$, called its \textbf{dual}, and 2-cells \[U_A \overset{\eta}{\to} M \odot M^\ast \hspace{10mm} M^\ast\odot M \overset{\epsilon}{\to} U_B \] satisfying the triangle identities. The maps $\eta$ and $\epsilon$ are called \textbf{coevaluation} and \textbf{evaluation}, respectively.  
\end{defn}
If $\mathcal{B}$ is a bicategory with a single 0-cell this reduces to  \cref{defn:dual_early}.
 \begin{ex}
 In the bicategory $\Mod/\Ring$ 
 \begin{itemize}
     \item 0-cells are rings, 
     \item 1-cells are bimodules, and 
     \item 2-cells are bimodule homomorphisms.
 \end{itemize} The tensor product, $\otimes$, is the composition of 1-cells. 

 The dualizable 1-cells are $R$-$S$-bimodules $M$ so that $M$ is finitely generated and projective as a right $S$-module.
\end{ex}

\begin{ex}
    In the bicategory $2\Vect_k$
    \begin{itemize}
        \item 0-cells are symbols $[n]$, $n = 1, 2, 3, \dots$,
        \item a 1-cell from $[m]$ to $[n]$ is an $m\times n$ matrix of $k$-vector spaces,
        \item a 2-cells from $[A_{ij}]$ to $[B_{ij}]$ (both having the same size) is linear maps $\phi = \{\phi_{ij}: A_{ij} \to B_{ij}\}$.
    \end{itemize}
    Composition of 1-cells is given by \[(A\odot B)_{ij} = \underset{\ell}{\bigoplus} A_{i\ell}\otimes B_{\ell j}.\] 
    This is the special case of the $\Mat(\mathcal{V})$ construction from \cite{betti} where $\mathcal{V}$ is a monoidal category with small coproducts preserved by the tensor product.

    A 1-cell is dualizable if all the entries in the matrix are finite dimensional vectors spaces. The dual of a 1-cell $[A_{ij}]$ is $[A^*_{ji}]$, the $ij$ entry of the coevaluation and evaluation is 0 if $i\neq j$, the coevaluation in the $ii$ entry 
     \[k\to \bigoplus_{m} A_{im}\otimes A^*_{im}\]
    is the sum of the coevaluations 
    \[k\to A_{im}\otimes A^*_{im}.\]
    The evaluation is similar. 
\end{ex}

Other useful sources of dualizable 1-cells are the bicategory of profunctors \cite[Chapter 7]{Borceux} and the homotopy bicategory of parameterized spectra \cite{may2004parametrized}.

To define the trace we need to be able to compose the coevaluation and evaluation, but in a bicategory these are not composable.  We resolve this with the addition of a shadow.
\begin{defn}\cite[Definition 4.1]{Ponto_2012}\label{defn:shadow}
    Let $\B$ be a bicategory. 
    A \textbf{shadow functor} for $\B$ consists of functors \[\sh{\text{-}}: \B(R, R) \to \mathbf{T}\] for each object $R$ of $\B$ and some fixed category $\mathbf{T}$, equipped with a natural isomorphism \[\theta: \sh{M \odot N} \overset{\cong}{\to} \sh{N \odot M} \] for $M: R \to S$ and $N: S \to R$ such that the following diagrams commute whenever they make sense:

% https://q.uiver.app/#q=WzAsMyxbMCwxLCJcXHNoe01cXG9kb3QgTlxcb2RvdCBQfSJdLFsyLDEsIlxcc2h7UFxcb2RvdCBNXFxvZG90IE59Il0sWzEsMCwiXFxzaHtOXFxvZG90IFBcXG9kb3QgTX0iXSxbMiwxLCJcXHRoZXRhIl0sWzAsMSwiXFx0aGV0YSJdLFswLDIsIlxcdGhldGEiXV0=&macro_url=https%3A%2F%2Fwww.overleaf.com%2Fproject%2F64c9c0fcea71c453e68a663b
\[\begin{tikzcd}[column sep = .1in]
	& {\sh{N\odot P\odot M}} \\
	{\sh{M\odot N\odot P}} && {\sh{P\odot M\odot N}}
	\arrow["\theta", from=1-2, to=2-3]
	\arrow["\theta", from=2-1, to=1-2]
	\arrow["\theta", from=2-1, to=2-3]
\end{tikzcd}
%
% https://q.uiver.app/#q=WzAsNCxbMCwwLCJcXHNoe01cXG9kb3QgVV9TfSJdLFsxLDAsIlxcc2h7VV9SXFxvZG90IE19Il0sWzIsMCwiXFxzaHtNXFxvZG90IFVfU30iXSxbMSwxLCJcXHNoe019Il0sWzAsMywiXFxzaHtyfSIsMl0sWzEsMywiXFxzaHtcXGVsbH0iXSxbMiwzLCJcXHNoe3J9Il0sWzAsMSwiXFx0aGV0YSJdLFsxLDIsIlxcdGhldGEiXV0=
\begin{tikzcd}
	{\sh{M\odot U_S}} & {\sh{U_R\odot M}} & {\sh{M\odot U_S}} \\
	& {\sh{M}}
	\arrow["{\sh{r}}"', from=1-1, to=2-2]
	\arrow["{\sh{\ell}}", from=1-2, to=2-2]
	\arrow["{\sh{r}}", from=1-3, to=2-2]
	\arrow["\theta", from=1-1, to=1-2]
	\arrow["\theta", from=1-2, to=1-3]
\end{tikzcd}\]
\end{defn}
\begin{rmk*}  We use the coherence results from \cite{bicat_coherence,MP:coherence} to omit structure maps, such as the associativity maps above, in bicategories (with and without shadows). 
\end{rmk*}

If $C$ is a symmetric monoidal category regarded as a bicategory with a single 0-cell the identity is a shadow.  For the bicategory $\Mod/\Ring$, the 0-th Hochschild homology is a shadow. 

\begin{ex}\label{shadow2Vect}
    Given a 2-matrix $A = [A_{ij}]$ in  the bicategory $2\Vect_k$, the shadow of $A$ is the trace of $A$. That is, \[\sh{A} = \bigoplus_i A_{ii}.\] In general,  $AB \neq BA$ for matrices $A$ and $B$,  however, the main diagonal remains unchanged. 
\end{ex}

\begin{defn}[Compare to \cref{defn:bicat_trace_early}]\cite[Definition 5.1]{Ponto_2012}\label{defn:bicat_trace}
    Let $\B$ be a bicategory with a shadow functor and $M$ a dualizable 1-cell of $\B$. The \textbf{bicategorical trace} of a 2-cell $f: Q\odot M \Rightarrow M \odot P$ is the composite: \[\xymatrix{\sh{Q} \ar[r]^-{\sh{\id\odot \eta}} & \sh{Q \odot M \odot M^\ast} \ar[r]^-{\sh{f\odot \id}} & \sh{M \odot P \odot M^\ast} \ar[r]^\theta & \sh{M^\ast \odot M\odot P} \ar[r]^-{\sh{\epsilon\odot \id}} & \sh{P}}.\]
\end{defn}

If $C$ is a symmetric monoidal category regarded as a bicategory with a single 0-cell and the identity as the shadow this reduces to Dold and Puppe's definition in \cite{Dold1978-ol}.  

\begin{ex}In the bicategory $\Mod/\Ring$ with the 0-th Hochschild homology as the shadow, this trace is a generalization of the Hattori-Stallings trace \cite{stallings}. 
\end{ex}

\begin{ex}
\label{ex:2_vect_trace}
A 2-cell $f\colon [A_{ij}]\odot [B_{ij}]\to [B_{ij}]\odot [D_{ij}]$ in  $2\Vect_k$ 
is  transformations
\[f_{ij}\colon \bigoplus_k A_{ik}\otimes B_{kj}
\to \bigoplus_{k}B_{ik}\otimes D_{kj}.\]
Consider the composite 
\begin{equation}\label{eq:2_vect_mor}A_{ii}\otimes B_{ij}\xto{\mathrm{inc}_i}
\bigoplus_k A_{ik}\otimes B_{kj}
\xto{f_{ij}} \bigoplus_{k}B_{ik}\otimes D_{kj}
\xto{\mathrm{proj}_j} 
B_{ij}\otimes D_{jj}\end{equation}
where the first map is the inclusion and the last is the projection.

Using the shadow from \cref{shadow2Vect}, the trace $f$ is a linear transformation 
\[\bigoplus_iA_{ii}\xto{\tr(f)} \bigoplus_{j}D_{jj}.\]
It is determined by the composites with each inclusion $A_{ii}\to \bigoplus_iA_{ii}$ and projection $\bigoplus_{j}D_{jj}\to D_{jj}$.

To simplify the identification of the trace  we first show that the inclusion and projection are traces. Let  $R_i(A)$ be a row vector where the only nonzero entry is $A$ in the $i^{th}$ entry and $C_j(D)$ is similar for columns. 
Consider the 2-cell
\[
R_i(A_{ii})\to 
\left[\begin{array}{ccccccc} A_{i1}&\cdots &A_{i(i-1)}&A_{ii} &A_{i(i+1)}&\cdots &A_{in}
\end{array}\right].\]
that includes the $i^{th}$ entry in a row vector of vector spaces.  This is a 2-cell
\begin{equation}\label{eq:row:inclusion} A_{ii}\odot R_i(k)
\to 
R_i(k)\odot 
[A_{ij}]\end{equation}
The vector 
$R_i(k)$
is dualizable and the trace of \eqref{eq:row:inclusion} is the inclusion $A_{ii}\to \bigoplus_{i}A_{ii}$.

In the same way  the 2-cell
\[\left[\begin{array}{c} D_{1j}\\\vdots \\D_{(j-1)j}\\D_{jj} \\D_{(j+1)j}\\\vdots\\D_{nj}
\end{array}\right]\to 
C_j(D_{jj})
\]
that projects to  the $j^{th}$ entry in a column vector of vector spaces is a map 
\begin{equation}\label{eq:column:projection} 
[D_{ij}]\odot C_j(k)
\to 
C_j(k)\odot D_{jj}
\end{equation}
The column vector $C_j(k)$
is dualizable and the trace of \eqref{eq:column:projection} is the projection $\bigoplus_{j}D_{jj}\to D_{jj}$.

Since the first and last maps the composite 
\begin{equation}\label{eq:2vect:include:project}
    A_{ii}\to \bigoplus_iA_{ii}\xto{\tr(f)} \bigoplus_{j}D_{jj}\to D_{jj}
\end{equation} are traces, \cite[7.5]{Ponto_2012} implies 
the composite in \eqref{eq:2vect:include:project} is the trace of the left composite in \cref{fig:trace:2:vect}.   The right composite in \cref{fig:trace:2:vect} is \eqref{eq:2_vect_mor}
and the composite \eqref{eq:2vect:include:project} is 
the trace of \eqref{eq:2_vect_mor}.

\begin{figure}
% https://q.uiver.app/#q=WzAsOCxbMCwwLCJBX3tpaX1cXG9kb3QgUl9pKGspXFxvZG90IFtCX3tpan1dXFxvZG90IENfaihrKSJdLFswLDEsIlJfaShrKVxcb2RvdCBbQV97aWp9XVxcb2RvdCBbQl97aWp9XVxcb2RvdCBDX2ooaykiXSxbMCwyLCJSX2koaylcXG9kb3QgW0Jfe2lqfV1cXG9kb3QgW0Rfe2lqfV1cXG9kb3QgQ19qKGspIl0sWzAsMywiUl9pKGspXFxvZG90IFtCX3tpan1dXFxvZG90IENfaihrKVxcb2RvdCBEX3tqan0iXSxbMSwwLCJBX3tpaX1cXG90aW1lcyBCX3tpan0iXSxbMSwxLCJcXGJpZ29wbHVzX2sgKEFfe2lrfVxcb3RpbWVzIEJfe2tqfSkiXSxbMSwyLCJcXGJpZ29wbHVzX2soQl97aWt9XFxvdGltZXMgRF97a2p9KSJdLFsxLDMsIkJfe2lqfVxcb3RpbWVzIERfe2pqfSJdLFs2LDcsIlxcbWF0aHJte3Byb2p9X2oiXSxbMiw2LCJcXHNpbSJdLFszLDcsIlxcc2ltIl0sWzIsMywiMVxcb2RvdCAxXFxvZG90IFxcZXFyZWZ7ZXE6Y29sdW1uOnByb2plY3Rpb259Il0sWzAsNCwiXFxzaW0iXSxbMSw1LCJcXHNpbSJdLFswLDEsIlxcZXFyZWZ7ZXE6cm93OmluY2x1c2lvbn1cXG9kb3QgMVxcb2RvdDEiXSxbNCw1LCJcXG1hdGhybXtpbmN9X2kiXSxbMSwyLCIxXFxvZG90IGZcXG9kb3QgMSJdLFs1LDYsImZfe2lqfSJdXQ==&macro_url=https%3A%2F%2Fwww.overleaf.com%2Fproject%2F64c9c0fcea71c453e68a663b
\[\begin{tikzcd}
	{A_{ii}\odot R_i(k)\odot [B_{ij}]\odot C_j(k)} & {A_{ii}\otimes B_{ij}} \\
	{R_i(k)\odot [A_{ij}]\odot [B_{ij}]\odot C_j(k)} & {\bigoplus_k (A_{ik}\otimes B_{kj})} \\
	{R_i(k)\odot [B_{ij}]\odot [D_{ij}]\odot C_j(k)} & {\bigoplus_k(B_{ik}\otimes D_{kj})} \\
	{R_i(k)\odot [B_{ij}]\odot C_j(k)\odot D_{jj}} & {B_{ij}\otimes D_{jj}}
	\arrow["\sim", from=1-1, to=1-2]
	\arrow["{\eqref{eq:row:inclusion}\odot 1\odot1}", from=1-1, to=2-1]
	\arrow["{\mathrm{inc}_i}", from=1-2, to=2-2]
	\arrow["\sim", from=2-1, to=2-2]
	\arrow["{1\odot f\odot 1}", from=2-1, to=3-1]
	\arrow["{f_{ij}}", from=2-2, to=3-2]
	\arrow["\sim", from=3-1, to=3-2]
	\arrow["{1\odot 1\odot \eqref{eq:column:projection}}", from=3-1, to=4-1]
	\arrow["{\mathrm{proj}_j}", from=3-2, to=4-2]
	\arrow["\sim", from=4-1, to=4-2]
\end{tikzcd}\]
    \caption{Trace in \cref{ex:2_vect_trace}}\label{fig:trace:2:vect}
\end{figure}
\end{ex}

\section{Statements of the main results}\label{sec:main}

A significant amount of the effort in this paper is in careful checking of diagrams.  In this section we state the main results in a level of detail we hope will allow the reader to appreciate the forest without feeling they are missing out too much on the trees.  (The trees can be found in \cref{sec:remnant,sec:remnant_functors,sec:traces}.)  This section is parallel to \cref{intro:classical}.

We start with the generalization of the endomorphism category.

\begin{restatable}[Compare to \cref{Lambda_bicat_early}]{thm}{restateLambdabicat}
\label{Lambda_bicat}
    Given a bicategory $\mathcal{B}$, there is a bicategory $\Lambda \mathcal{B}$ where 
    \begin{itemize}
        \item the 0-cells are the endomorphism 1-cells of $\mathcal{B}$,
        \item the 1-cells $P\to Q$ are pairs $(M,\psi)$ where $M$ is a dualizable 1-cell in $\mathcal{B}$ and 
        $\psi$ is a 2-cell 
        \[P\odot M\Rightarrow M\odot Q,\]
        and 
        \item the 2-cells $(M,\psi)\Rightarrow (M',\psi')$ are isomorphism 2-cells $\beta\colon M\Rightarrow M'$ of $\mathcal{B}$ so that the diagram 
          \[\xymatrix{P\odot M \ar@{=>}[d]_{\id\odot\beta} \ar@{=>}[r]^\psi & M\odot Q \ar@{=>}[d]^{\beta\odot\id}\\
    P\odot M' \ar@{=>}[r]_{\psi'} & M'\odot Q}\]
    commutes.
    \end{itemize}
\end{restatable}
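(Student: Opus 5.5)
To prove \cref{Lambda_bicat}, I will write down the remaining bicategory data explicitly and then check the axioms by reducing every coherence question to one already holding in $\mathcal{B}$. An endomorphism 1-cell means a pair consisting of a 0-cell $A$ of $\mathcal{B}$ and a 1-cell $P\colon A\to A$.

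\textbf{Composition of 1-cells.} Given $(M,\psi)\colon P\to Q$ and $(N,\phi)\colon Q\to R$, I will set
\[(M,\psi)\odot(N,\phi)=(M\odot N,\ \Psi),\]
where $\Psi$ is the composite
\[P\odot M\odot N\xRightarrow{\psi\odot\id}M\odot Q\odot N\xRightarrow{\id\odot\phi}M\odot N\odot R,\]
with associators suppressed as licensed by the coherence remark after \cref{defn:shadow}. I need $M\odot N$ to be dualizable. The dual is $N^*\odot M^*$, with coevaluation
\[U\xRightarrow{\eta_M} M\odot M^*\cong M\odot U\odot M^*\xRightarrow{\id\odot\eta_N\odot\id} M\odot N\odot N^*\odot M^*,\]
and the evaluation is built dually. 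The triangle identities follow from those for $M$ and $N$ by the standard string-diagram argument.

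\textbf{Units.} The identity 1-cell on $P$ is $(U_A,\lambda)$, where $\lambda\colon P\odot U_A\Rightarrow U_A\odot P$ is the composite of unitors. $U_A$ is self-dual via the unitors.

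\textbf{Composition of 2-cells.} Vertical composition is composition of isomorphisms in $\mathcal{B}$. Horizontal composition of $\beta$ and $\gamma$ is $\beta\odot\gamma$. To see that this is a 2-cell of $\Lambda\mathcal{B}$, I paste the two defining squares side by side and use the interchange law. Functoriality of $\odot$ on hom-categories is inherited from $\mathcal{B}$.

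\textbf{Structure 2-cells.} The associator and unitors of $\Lambda\mathcal{B}$ are those of $\mathcal{B}$, acting on the $M$-components. These are invertible, as the definition requires. I must check they are morphisms of $\Lambda\mathcal{B}$. For the associator, this means the square comparing $((\psi\cdot\phi)\cdot\chi)$ with $(\psi\cdot(\phi\cdot\chi))$ through $\alpha\colon (M\odot N)\odot L\Rightarrow M\odot(N\odot L)$ commutes. This follows from naturality of $\alpha$ together with the pentagon, or more simply from the coherence theorem: both sides are the same formal composite $\psi$, then $\phi$, then $\chi$, interspersed with structure cells. The unit cases are handled the same way, using naturality of the unitors and the triangle axiom.

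\textbf{Axioms.} The pentagon and triangle axioms in $\Lambda\mathcal{B}$ are equations between 2-cells. Since 2-cells of $\Lambda\mathcal{B}$ are certain 2-cells of $\mathcal{B}$, and the forgetful assignment preserves all compositions, these axioms hold because they hold in $\mathcal{B}$.

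The main obstacle I expect is keeping the suppressed associators honest in the verification that the associator is a 2-cell of $\Lambda\mathcal{B}$. There the comparison involves $P\odot M\odot N\odot L$ with several bracketings, and I will invoke the coherence results of \cite{bicat_coherence,MP:coherence} to justify working as if strict. The dualizability of composites is standard, and I would cite \cite[16.4]{may2004parametrized} rather than redo it.
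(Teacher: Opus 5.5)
Your proposal is correct and follows essentially the same route as the paper. Both arguments compose 1-cells as $(M\odot N,(\id\odot\phi)\circ(\psi\odot\id))$, take the unit to be $U_A$ twisted by unitors, check via naturality, coherence and the triangle axiom that the associator and unitors of $\mathcal{B}$ are 2-cells of $\Lambda\mathcal{B}$, and inherit the remaining axioms from $\mathcal{B}$. You also make explicit two points the paper leaves implicit: that $M\odot N$ is dualizable with dual $N^*\odot M^*$, and that horizontal composites $\beta\odot\gamma$ satisfy the defining square by interchange.
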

We call this the {\bf \remnant{}  
 bicategory} of $\mathcal{B}$. 
See \cref{sec:remnant} for the proof of \cref{Lambda_bicat}.

 The notation of $\Lambda$ is inspired by the following example. 

 \begin{ex}\label{ex:inertia}  Let $G$ be a group and $BG$ be the bicategory with a single 0-cell, $G$ as 1-cells, and only identity 2-cells.  
    The objects of $\Lambda BG$ are the elements of $G$.  A 1-cell in $\Lambda BG$ from $g$ to $h$ is an element $k$ of $G$ so that 
    \[gk=kh.\]  (Note that all 1-cells are dualizable in $BG$.)  The only 2-cells are identities.  This is the \textbf{inertia groupoid} of $G$.
    
    In contrast to \cref{intro:classical}, here we can apply the definition of $\Lambda$ directly to $BG$ since it is a bicategory.
\end{ex}

One feature of this bicategory that is very different from its classical inspiration is the ``twisted'' endomorphisms rather than the more familiar true endomorphisms.  \cref{ex:inertia} is a good source of inspiration for this twisting where it reflects the conjugation invariance of the character.

\begin{restatable}{thm}{lmabdaofacomposite}
 \label{lambda_of_a_composite}
    A strong 2-functor $F\colon \B\to \mathcal{C}$ induces a strong 2-functor 
    \[\Lambda F\colon \Lambda \B\to \Lambda \mathcal{C} \] 
where \begin{itemize}
\item on 0-cells $\Lambda F(P)=F(P)$,
\item on 1-cells $\Lambda F(M,\psi)=(F(M), F(P)\odot F(M)\xto{\phi} F(P\odot M)\xto{F(\psi)} F(M\odot Q)\xto{\phi^{-1}} F(M)\odot F(Q))$, and 
\item on 2-cells $\Lambda F(\beta)=F(\beta)$.
\end{itemize}

    If $F\colon \B\to \mathcal{C}$ and $G\colon \mathcal{C}\to \mathcal{D}$ are strong 2-functors, 
\[\Lambda(F\circ G)=(\Lambda F)\circ (\Lambda G).\]
\end{restatable}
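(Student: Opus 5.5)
The plan is to build $\Lambda F$ in stages: first check that the assignments land in $\Lambda\mathcal{C}$, then give the compositor and unitor data, then verify the strong 2-functor axioms, and finally compare the composite. Throughout, $\phi$ denotes the compositor of $F$ and $\iota\colon U_{F(A)}\Rightarrow F(U_A)$ its unitor, both invertible.

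\textbf{Well-definedness.} On 0-cells, $F(P)$ is an endomorphism 1-cell of $\mathcal{C}$, so it is a 0-cell of $\Lambda\mathcal{C}$.
\begin{itemize}
\item \emph{1-cells.} A strong 2-functor preserves dualizability. The dual of $F(M)$ is $F(M^*)$, with coevaluation
\[U\xto{\iota}F(U)\xto{F(\eta)}F(M\odot M^*)\xto{\phi^{-1}}F(M)\odot F(M^*),\]
and evaluation defined similarly. The triangle identities follow from the coherence of $\phi$ and $\iota$ together with functoriality of $F$. Hence $\Lambda F(M,\psi)$ is a 1-cell $F(P)\to F(Q)$.
\item \emph{2-cells.} Suppose $\beta$ is a 2-cell $(M,\psi)\Rightarrow(M',\psi')$. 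Then $F(\beta)$ is an isomorphism. The required square for $F(\beta)$ is obtained by pasting three squares: the naturality square of $\phi$ for $\id\odot\beta$, the square $F(\text{original square})$, and the naturality square of $\phi^{-1}$ for $\beta\odot\id$. Functoriality on 2-cells is inherited from $F$.
\end{itemize}

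\textbf{Compositor and unitor.} Before defining these I need the form of composition and units in $\Lambda\mathcal{B}$ from the proof of \cref{Lambda_bicat}.
\begin{itemize}
\item The composite $(N,\chi)\circ(M,\psi)$ is $M\odot N$ with the 2-cell
\[P\odot M\odot N\xto{\psi\odot\id}M\odot Q\odot N\xto{\id\odot\chi}M\odot N\odot R.\]
\item The unit on $P$ is $U$ with the unitor isomorphisms $P\odot U\cong P\cong U\odot P$.
\end{itemize}
I take the compositor of $\Lambda F$ to be $\phi\colon F(M)\odot F(N)\Rightarrow F(M\odot N)$, and the unitor to be $\iota$. The 2-functor axioms (associativity and unit coherence) are then exactly the axioms for $F$, because 2-cells of $\Lambda\mathcal{C}$ compose as in $\mathcal{C}$.

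\textbf{Main obstacle.} The key step is checking that $\phi$ and $\iota$ really are 2-cells of $\Lambda\mathcal{C}$, i.e.\ that they satisfy the compatibility square. For $\phi$, one route around the square is
\[F(P)\odot F(M)\odot F(N)\Rightarrow F(M)\odot F(Q)\odot F(N)\Rightarrow F(M)\odot F(N)\odot F(R),\]
using $\phi F(\psi)\phi^{-1}$ and then $\phi F(\chi)\phi^{-1}$, followed by $\phi$. The other route is $\phi$ followed by the conjugate of $F((\id\odot\chi)(\psi\odot\id))$ by $\phi$. The first thing I would try is to expand everything into the composite of instances of $\phi$ on threefold products. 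Then:
\begin{enumerate}
\item use naturality of $\phi$ to move $F(\psi)\odot\id$ and $\id\odot F(\chi)$ past the compositors;
\item use the hexagon (associativity) axiom for $\phi$, with associators suppressed by the coherence of \cite{bicat_coherence}, to cancel the intermediate $\phi\phi^{-1}$ pairs.
\end{enumerate}
This diagram chase is the bulk of the work. The unit case is analogous, using the unit axioms of $F$ relating $\iota$, $\phi$ and the unitors.

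\textbf{Composites.} Finally, for $\Lambda(G\circ F)$ versus $\Lambda G\circ\Lambda F$ (the statement writes $F\circ G$; the intended composite is $\mathcal{B}\to\mathcal{C}\to\mathcal{D}$):
\begin{itemize}
\item On 0-cells and 2-cells the two agree by definition.
\item On 1-cells, both give $GF(M)$. The 2-cells agree because the compositor of $G\circ F$ is $G(\phi^F)\circ\phi^G$. Expanding $\Lambda G(\Lambda F(M,\psi))$ gives
\[\phi^G\,G(\phi^F)\,GF(\psi)\,G(\phi^F)^{-1}\,(\phi^G)^{-1},\]
which is exactly the 2-cell of $\Lambda(G\circ F)(M,\psi)$, using that $G$ preserves inverses.
\item The compositors and unitors also match, since those of the composite are composites of those of $F$ and $G$.
\end{itemize}
So the equality holds on the nose.
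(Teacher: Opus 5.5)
Your proposal is correct and follows essentially the same route as the paper. The key step is showing that $\phi$ and $\iota$ are 2-cells of $\Lambda\mathcal{C}$ by a diagram chase using naturality of $\phi$ and the associativity and unit axioms of $F$; this is the paper's \cref{inducted_functor_lem_1,inducted_functor_lem_2}. You then inherit the remaining coherence from $F$ and compare composites by expanding the compositor of the composite functor, as the paper does. You are also more explicit than the paper about preservation of dualizability, the compatibility square for $F(\beta)$, and the matching of compositors, and your reading of the composite as $\mathcal{B}\to\mathcal{C}\to\mathcal{D}$ resolves the paper's inconsistent ordering sensibly.
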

See  \cref{sec:remnant_functors} for the proof. 

The primary example of interest is {\bf 2-representations}, that is  strong 2-functors 
\[\rho\colon BG\to \mathcal{C}.\]
A 2-representation consists of  
\begin{enumerate}
        \item an object $V$ of $\cc$,
        \item for each element $g\in G$, a dualizable 1-cell $V\xto{\rho(g)}V$,
        \item for any pair of elements $g, h$ of $G$, a natural 2-isomorphism

        \begin{equation}\label{phi_2_rep_pair}\phi_{g,h}: \rho(g) \odot \rho(h) \overset{\cong}{\implies} \rho(gh),\end{equation} and
        \item\label{unit_2_rep}  if $e$ is the identity element of $G$, a natural 2-isomorphism
        \begin{equation}\label{phi_2_rep_unit}\phi_e: \rho(e) \overset{\cong}{\implies} U_V,\end{equation}
    \end{enumerate}
        satisfying unit and associativity conditions.  On 0- and 1-cells, the functor  $\Lambda\rho$ is given by 
        \begin{enumerate}
            \item a 1-cell $V\xto{\rho(g)}V$ in $\mathcal{C}$ for each $g\in G$
            \item for each equality $gh=hk$ in $G$, a 2-cell 
            \begin{equation}\label{eq:LambdaGC:2-cell}\rho(g)\odot \rho(h)\Rightarrow \rho(h)\odot \rho(k)\end{equation}
            in $\mathcal{C}$.
        \end{enumerate}
    The remaining structure follows from that in $\mathcal{C}$ and $\rho$.
    
    \begin{ex}One of the most immediate sources of 2-representations are classical linear representations. Suppose $\rho$ is a linear representation of a group $G$  and, for all $g\in G$, all entries are 0 or 1 in  a matrix representation for $\rho(g)$.  Then  $\rho$ defines a 2-$G$-representation in 2-Vect where the entries with 0 are replaced by the zero dimensional vectors space and those with entry 1 are replaced by a 1-dimensional vector space. 
\end{ex}

Base change objects provide a very different source of 2-representations.
\begin{ex}\label{ring2rep}
Let $\psi\colon G\to \Aut(R)$ be a group homomorphism from a group $G$ to the automorphisms of a ring $R$.  
Let  $R_{\psi(g)}$ be the set $R$ with the usual left multiplication of $R$ on itself and right action twisted by $\psi(g)$.  This is a \emph{base change} 1-cell in   
$\Mod/\Ring$  and satisfies \eqref{phi_2_rep_pair} and \eqref{phi_2_rep_unit}.  It is dualizable by \cite[16.4.2]{may2004parametrized} and so the assignment of $R_{\psi(g)}$ to $g$ defines a $G$-2-representation in $\Mod/\Ring$.
\end{ex}

The following example has the same foundation but it technically more demanding. 

\begin{ex}\label{top2rep}
Let $\psi\colon G\to \Aut(X)$ be a group homomorphism from a group $G$ to the automorphisms of a topological space $X$.  The base change 1-cells 
\[X_{\psi(g)}\coloneqq (\id\times \psi(g))^*U_X\]
 \cite[17.2.1]{may2004parametrized} define 1-cells in the parametrized stable homotopy category and satisfy \eqref{phi_2_rep_pair} by \cite[17.2.6]{may2004parametrized} and \eqref{phi_2_rep_unit} by definition.  They are dualizable by \cite[17.3.1]{may2004parametrized} and so this assignment defines a $G$-2-representation in the parameterized stable homotopy category.
\end{ex}

The last step before defining characters for these 2-representations is to extend trace to a functor.
    
\begin{restatable}[Compare to \cref{thm:character_early}] {thm}{thmcharacter}
\label{thm:character}
Let $\B$ be a bicategory with shadow functors 
\[\sh{\text{-}}: \B(R, R) \to \mathbf{T}.\]
Let $\T$ be $\mathbf{T}$ regarded as a 2-category with only identity 2-cells. 
There is a strict 2-functor 
$\rchi: \Lambda \B \to \T$ 
defined 
by \begin{align*} 
\rchi(P) &= \sh{P},
\\ 
\rchi[(N, \gamma)] &= \tr(\gamma),\ \text{and} 
\\ 
\rchi(\ltwo{\psi}) &= \id_{\tr(\gamma)}. \end{align*} 
\end{restatable}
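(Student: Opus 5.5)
Since $\T$ has only identity 2-cells, a strict 2-functor $\Lambda\B\to\T$ amounts to the following data. We need an assignment on 0-cells and 1-cells that respects composition and units on the nose. We need an assignment on 2-cells that sends every 2-cell to an identity, which requires that 2-cells only connect 1-cells with equal images. I would therefore check three things, in this order: well-definedness on 2-cells, preservation of identity 1-cells, and preservation of composition of 1-cells. The 2-functor axioms for 2-cells (vertical and horizontal composition, and the associator and unitor 2-cells of $\Lambda\B$ going to identities) are then automatic. In $\T$ all of these must be identities, and the conditions only make sense once the images of sources and targets coincide, which the earlier steps ensure.

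For 2-cells, let $\beta\colon (M,\psi)\Rightarrow (M',\psi')$ be a 2-cell in $\Lambda\B$. Then $\beta\colon M\Rightarrow M'$ is an invertible 2-cell with $(\beta\odot\id)\psi=\psi'(\id\odot\beta)$. I must show $\tr(\psi)=\tr(\psi')$. I would use that an invertible $\beta$ transports the duality data: $M'^*$ may be taken to be $M^*$, with coevaluation $(\beta\odot\id)\eta$ and evaluation $\epsilon(\id\odot\beta^{-1})$, and the trace is independent of the choice of dual data. Substituting into \cref{defn:bicat_trace}, we get
\[
\tr(\psi')=\sh{\epsilon\odot\id}\,\theta\,\sh{(\id\odot\beta^{-1}\odot\id)(\psi'\odot\id)(\id\odot\beta\odot\id)}\,\sh{\id\odot\eta},
\]
where naturality of $\theta$ moves the $\beta^{-1}$ across. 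The middle term equals $\psi\odot\id$ by the commuting square, so $\tr(\psi')=\tr(\psi)$. This is the classical conjugation-invariance argument, and it is the reason 2-cells in $\Lambda\B$ are required to be isomorphisms.

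For identity 1-cells, the identity on $P$ in $\Lambda\B$ is $(U_R,\ P\odot U_R\cong P\cong U_R\odot P)$, with $U_R$ self-dual via unitors. Its trace is a composite of shadows of unitors together with $\theta$, and the unit axiom in \cref{defn:shadow} shows that this composite is $\id_{\sh{P}}$.

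For composition, $(M,\psi)\colon P\to Q$ and $(N,\gamma)\colon Q\to W$ compose to $(M\odot N,\ (\id_M\odot\gamma)(\psi\odot\id_N))$, with dual $N^*\odot M^*$. I must show that the trace of this composite is $\tr(\gamma)\circ\tr(\psi)$. This is the main obstacle: it is the bicategorical analogue of multiplicativity of trace under composition of dualizable 1-cells. I would invoke the composition result for bicategorical traces from \cite{Ponto_2012} (the same source as \cite[7.5]{Ponto_2012} used in \cref{ex:2_vect_trace}). If a direct proof is needed, I would expand $\eta_{M\odot N}=(\id\odot\eta_N\odot\id)\eta_M$ and $\epsilon_{M\odot N}=\epsilon_N(\id\odot\epsilon_M\odot\id)$. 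Interchange in $\B$ and naturality of $\theta$ then slide $\epsilon_M$ and the $\eta_M$ leg past $\gamma$, and the triangle-shaped coherence for $\theta$ in \cref{defn:shadow} (three cyclic rotations composing to one) splits the single rotation into the rotations appearing in $\tr(\psi)$ and $\tr(\gamma)$.
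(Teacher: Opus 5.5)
Your proposal is correct and follows the paper's proof step for step. The paper proves 2-cell invariance by transporting the duality data along the invertible 2-cell and using independence of the choice of dual (\cref{lem:treq}), preservation of units by coherence and the shadow unit axiom (\cref{lem:unit}), and multiplicativity under composition by \cite[7.5]{Ponto_2012} (\cref{lem:comp}). One trivial slip: after you move $\beta^{-1}$ across $\theta$ by naturality, it acts on the first factor of $M'\odot Q\odot M^*$, so the middle term should read $(\beta^{-1}\odot\id\odot\id)(\psi'\odot\id)(\id\odot\beta\odot\id)$.
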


See \cref{sec:traces} for the proof.

\begin{restatable}[Compare to \cref{def:character_early}]{defn}{defcharacter}
\label{lab:def:character}
    Let $\B$ be a bicategory with shadow $\sh{\,}$ valued in $\mathbf{T}$ and 
    \[\rho\colon \mathcal{C}\to \B\] be a strong 2-functor. 
    The \textbf{character} of $\rho$ is the composite
    \[\Lambda \mathcal{C}\xto{\Lambda \rho} \Lambda \B\xto{\chi}\T,\] where $\Lambda\rho$ is the induced strong functor from \cref{lambda_of_a_composite}
and $\rchi$ is the strict 2-functor from \cref{thm:character}.
\end{restatable}

Following \cref{def:character_early}, we denote this composite $\mathcal{X}_\rho$.

Explicitly, for a strong 2-functor $\rho\colon BG\to \mathcal{B}$, the character is the functor 
\[\Lambda BG\to \mathcal{T}\] given by 
\begin{enumerate}
    \item for each $g\in G$, the object $\chi\circ (\Lambda \rho)(g)=\sh{\rho(g)}$ in ${T}$, and 
    \item each equality $gh=hk$ in $G$, the morphism  
    \begin{equation} \sh{\rho(g)}\xto{\tr\eqref{eq:LambdaGC:2-cell}}\sh{\rho(k)}
    \label{eq:character_on_morphism}
    \end{equation}
            in $\mathcal{T}$.
\end{enumerate}

\begin{ex} If we use the 0th Hochschild homology as the shadow in \cref{ring2rep}, then the image of an equality $gh=hk$ is the map 
\[HH_0(R, R_{\psi(g)})\to HH_0(R, R_{\psi(k)})\]
that sends a class $[r]$ to $[hr]$.
\end{ex}

\begin{ex}
Returning to \cref{top2rep}, $\sh{X_{\psi(g)}}$ is equivalent to $\Lambda ^{\psi(g)}X$, the $\psi(g)$ twisted loops in $X$.    The image of an equality of group elements $gh=hk$ is the map 
\[\Lambda ^{\psi(g)}X\to \Lambda ^{\psi(k)}X\]
that takes a twisted loop $\gamma $ in $\Lambda ^{\psi(g)}X$ to $\psi(h)(\gamma)$ in $\Lambda ^{\psi(k)}X$.

Considering the special case of the dihedral group of order 6 acting on the circle, the relation $rs=sr^{-1}$ gives a morphism 
\[\Lambda^rS^1\to \Lambda ^{r^{-1}}S^1\]
by reflection.
\end{ex}

\subsection{Restriction of representations}\label{sec:restriction}\label{charrestrict}
    For a functor $\iota\colon \mathcal{C}\to \mathcal{D}$  and representation $\rho\colon \mathcal{D}\to \B$, the \textbf{restriction of $\rho$ along $\iota$}, $R_\mathcal{D}^\mathcal{C}(\rho)$,  is the composite 
    \[\mathcal{C}\xto{\iota}\mathcal{D} \xto{\rho} \B.\]
The character of the restriction is 
    \[\Lambda \mathcal{C} \xto{\Lambda (\rho\circ \iota)} \Lambda \B\xto{\rchi} \T.\]
By \cref{lambda_of_a_composite}, this is 
        \[\Lambda \mathcal{C} \xto{\Lambda (\iota)} \Lambda \mathcal{D}\xto{\Lambda \rho}\Lambda \B\xto{\rchi} \T\]
        showing 
        $R_{\Lambda \mathcal{D}}^{\Lambda \mathcal{C}}(\mathcal{X}_\rho)=\mathcal{X}_{R_\mathcal{D}^\mathcal{C}(\rho)}$,
    generalizing the familiar result from classical representation theory.

\subsection{Induced and coinduced representations}
Unfortunately induced and coinduced representations are more complicated to define than restriction.  We follow \cref{intro:classical} and  use comma 2-categories to give a fairly straightforward, if notationally elaborate, definition.  

Let $\mathcal{C}, \mathcal{D}, \mathcal{B}$ be bicategories and  $\iota\colon \mathcal{C}\to \mathcal{D}$  and $\rhov\colon \mathcal{C}\to \mathcal{B}$ be strong 2-functors.  Let $\Fun(\mathcal{D},\mathcal{B})$ denote the bicategory of strong 2-functors, strong transformations, and modifications from $\mathcal{D}$ to $\mathcal{B}$.

   The functor $\iota$ induces a strong 2-functor
\[\iota^*\colon 
\Fun(\mathcal{D}, \mathcal{B})\to \Fun(\mathcal{C}, \mathcal{B}).\]
We abuse notation and let $\rhov$ denote the functor 
\[\ast\to \Fun(\mathcal{C}, \mathcal{B})\] that picks out the strong functor $\rhov$ for the single 0-cell and identity 1- and 2-cells. Let $\rhov\downarrow \iota^*$ be the 2-comma category \cite[I.2.5]{gray} of $\iota^*$ and $\rhov$.  

\begin{defn}\label{def:induced_rep}The {\bf induced representation} of $\rhov$ to $\mathcal{D}$, denoted $I_\mathcal{C}^\mathcal{D}\rhov$, is the initial 0-cell of $\rhov\downarrow \iota^*$. 
\end{defn}

The 0-cells of $\rhov\downarrow \iota^*$ are pairs $(\rhow, \epsilon)$ consisting of a  functor $\rhow\colon \mathcal{D}\to \mathcal{B}$ and a transformation of strong 2-functors 
\[\epsilon\colon \rhov\to \iota^*\rhow.\]
As in the classical case (\cref{thm:intro:induced_classical}), we  restrict attention to a subcategory of $\rhov\downarrow \iota^*$ to have compatibility between induction and characters.  We denote this subbicategory by $(\rhov\downarrow \iota^*)^d$and the definition can be found in  \cref{def:conditions_for_induction}. 
We 
will   abuse notation and also  use $I_{\mathcal{C}}^\mathcal{D}\rhov$ to indicate the functor as well as the 0-cell in $\rhov\downarrow \iota^*$.

\begin{restatable}{thm}{thm:intro:induced}
\label{intro:induced}  If $I_\mathcal{C}^\mathcal{D}(\rhov)$  is a 0-cell in $(\rhov\downarrow \iota^*)^d$,
    there is a (unique) 1-cell 
        \[I_{\Lambda \mathcal{C}}^{\Lambda \mathcal{D}} (\mathcal{X}_{\rhov})\to \mathcal{X}_{I_\mathcal{C}^\mathcal{D}(\rhov)}.\]
\end{restatable}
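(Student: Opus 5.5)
The plan follows the proof of \cref{thm:intro:induced_classical}: extend the character to a 2-functor between comma bicategories, then use the universal property of the initial 0-cell $I_{\Lambda\mathcal{C}}^{\Lambda\mathcal{D}}(\mathcal{X}_\rhov)$ in $\mathcal{X}_\rhov\downarrow(\Lambda\iota)^*$.

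First I will construct a strong 2-functor
\[(\rhov\downarrow \iota^*)^d\to (\mathcal{X}_\rhov\downarrow (\Lambda\iota)^*).\]
On a 0-cell $(\rhow,\epsilon)$ the output is $(\mathcal{X}_\rhow, \rchi\circ\Lambda\epsilon)$. Here $\mathcal{X}_\rhow=\rchi\circ\Lambda\rhow$ is defined because the conditions in \cref{def:conditions_for_induction} are exactly what is needed for the relevant components of $\rhow$ to be dualizable. The transformation $\Lambda\epsilon$ is obtained by extending \cref{lambda_of_a_composite} from strong 2-functors to strong transformations and modifications. Concretely, a strong transformation $\epsilon\colon\rhov\to\iota^*\rhow$ has 1-cell components $\epsilon_c$ and naturality 2-cells. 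Whiskering these against the twisted endomorphism data $P\odot M\Rightarrow M\odot Q$ should give a transformation
\[\Lambda\rhov\to\Lambda\rhow\circ\Lambda\iota=\Lambda(\rhow\circ\iota),\]
where the equality is \cref{lambda_of_a_composite}. Post-composing with the strict 2-functor $\rchi$ of \cref{thm:character} then gives a transformation
\[\mathcal{X}_\rhov\to(\Lambda\iota)^*\mathcal{X}_\rhow.\]
Its components are the traces of the commuting squares built from $\epsilon$. Dualizability of the components $\epsilon_c$ is required here, and it should be part of the condition $(-)^d$. 1-cells and 2-cells of the comma bicategory are handled in the same way, using the functoriality of trace, the composite and naturality results of \cite{Ponto_2012} such as \cite[7.5]{Ponto_2012}.

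Next, because $I_\mathcal{C}^\mathcal{D}(\rhov)$ is assumed to lie in $(\rhov\downarrow\iota^*)^d$, its image $(\mathcal{X}_{I_\mathcal{C}^\mathcal{D}\rhov},\rchi\circ\Lambda\epsilon)$ is a 0-cell of $\mathcal{X}_\rhov\downarrow(\Lambda\iota)^*$. Initiality of $I_{\Lambda\mathcal{C}}^{\Lambda\mathcal{D}}(\mathcal{X}_\rhov)$ then supplies the required 1-cell. For uniqueness, note that $\T$ has only identity 2-cells. This should force any two 1-cells out of the initial object to coincide on the nose, not merely up to an invertible 2-cell, since the modification data relating them must be identities. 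This gives the parenthetical ``(unique)''.

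I expect the main obstacle to be the first step, specifically showing that $\Lambda$ extends to strong transformations and that $\rchi\circ\Lambda\epsilon$ satisfies the coherence axioms of a transformation. The components of $\Lambda\epsilon$ are themselves 1-cells of the form $(\epsilon_c,\text{2-cell})$ in $\Lambda\mathcal{B}$. Checking their naturality against 1-cells $(M,\psi)$ of $\Lambda\mathcal{C}$ amounts to an equality of two composite 2-cells in $\mathcal{B}$, after which the two traces must be compared. I would first reduce this to the statement that trace takes the horizontal composite of twisted endomorphisms to the composite of traces (the 1-cell part of \cref{thm:character}), combined with \cref{lambda_of_a_composite}. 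The remaining coherence diagrams then collapse automatically, because $\T$ has only identity 2-cells.
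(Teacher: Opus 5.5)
Your proposal is correct and follows the paper's route. The paper likewise extends the character to a strict 2-functor $(\rhov\downarrow\iota^*)^d\to(\mathcal{X}_{\rhov}\downarrow(\Lambda\iota)^*)$ (\cref{thm:trace_induced}), whose value on a 0-cell is exactly your $\rchi\circ\Lambda\epsilon$, with components $\tr(\epsilon_f^{-1})$ and the naturality square checked via \cite[7.5]{Ponto_2012} in \cref{lem:0:cell:functor}, and then invokes initiality of $I_{\Lambda\mathcal{C}}^{\Lambda\mathcal{D}}(\mathcal{X}_{\rhov})$.

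Only the 0-cell part of that functor is needed for this statement. This is fortunate, because your claim that 1-cells are ``handled in the same way'' is not right as stated: pushing the modification components $\alpha_c$ through $\Lambda$ would require them to be invertible. Separately, your remark that the target's hom-categories are discrete, because $\T$ has only identity 2-cells, usefully justifies the ``(unique)'', which the paper simply reads off from initiality.
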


This result is an immediate consequence of \cref{thm:trace_induced} that shows the character induces a functor on the relevant comma categories.

There are three other combinations analogous to \cref{intro:induced}.  The first variation to consider is the coinduced representation. 

\begin{defn}
The {\bf coinduced representation} of $\rhov$ to $\mathcal{D}$, denoted $C_\mathcal{C}^\mathcal{D}\rhov$, is the terminal 0-cell of $\iota^*\downarrow \rhov$. 
\end{defn}

The proof of \cref{intro:induced} can be minimally modified to show there is a (unique) 1-cell 
    \[\mathcal{X}_{C_\mathcal{C}^\mathcal{D}(\rhov)}\to 
     C_{\Lambda \mathcal{C}}^{\Lambda \mathcal{D}} (\mathcal{X}_{\rhov})\]
     if $C_\mathcal{C}^\mathcal{D}(\rhov)$ is a 0-cell in the analog of $(\rhov\downarrow \iota^*)^d$.
     
In the two remaining variations of  \cref{intro:induced} we replace the shadows and traces of \cite{p:thesis} with the coshadows and cotraces of \cite{barhite2023bicategorical}.  Not only is this change a straightforward formal modification, it reflects the motivating example in \cite{GK}.  See \cref{sec:coshadows} for a discussion of this structure.

\subsection{Joint character}
In this subsection we require that the shadow takes values in a symmetric monoidal category $(\mathbf{T}, \otimes, S)$. 

\begin{defn}\label{def:repdual}
For a $G$-2-representation $\rho$, we say $g\in G$ is {\bf \repdual} relative to $\rho$ if $\sh{\rho(g)}$ is dualizable in $\mathbf{T}$.
\end{defn}

The bicategory of $2\Vect_k$  provides useful examples here.  For any $G$-2-representation in $2\Vect_k$, a group element $g\in G$ is {\repdual} if the entries of the  matrix $\rho(g)$ are finite dimensional vector spaces since $\sh{\rho(g)}$ is then a finite sum of finite dimensional vector spaces.   On the other hand, the bicategory of parameterized spectra is not similarly generous with illuminating examples since the shadow produces loop spaces.

\begin{defn}
If $gh=hg$, and $g$ is \repdual, the {\bf joint trace} of $(g,h)$ relative to a representation $\rho$ is the symmetric monoidal trace of 
\eqref{eq:character_on_morphism}.

\end{defn}

Then the following definition is motivated by \cref{lab:def:character}.

\begin{defn}\label{def:joint_character}
If all elements of $G$ are {\repdual} with respect to a 2-representation $\rho$, the {\bf joint character}
 is the functor
\[\Lambda(\Lambda BG)\to \mathbf{T}(S,S)\]
where the image of $(g,h)$ is the joint trace of $(g,h)$ relative to $\rho$.
\end{defn}

The 0-cells of  $\Lambda(\Lambda BG)$ are pairs $(g,h)$ so that $gh=hg$.  (More carefully, the $g$ is implicit but since $h$ does not carry the data of its source and target, it is useful to keep the $g$ as part of the notation.)  The 1-cells from $(g,h)$ to $(g',h')$ are group elements $k$ so that,
\begin{itemize}
\item $gk=kg'$, so that $k$ is a 1-cell in $\Lambda BG$ from $g$ to $g'$, and, 
\item   $hk=kh'$ so that $k$ satisfies the equality in the definition of the 1-cell in \cref{Lambda_bicat}.  
\end{itemize} (In that statement it is a 2-cell, but there are only identity 2-cells in this case.)  Since there are only identity 2-cells in $\Lambda(\Lambda BG)$, the joint character is a \emph{function} from pairs of commuting elements up simultaneous conjugation to the set $\mathbf{T}(S,S)$.

At this point  these ideas connect to those in \cite{BZN,cp:iterated}.  Suppose $\mathcal{B}$ is a monoidal bicategory with monoidal product $\otimes$ and monoidal unit $I$.  
\begin{defn}\cite[6.4, 6.16]{cp:iterated} \label{defn:2_dual} A 0-cell $A$ is {\bf 2-dualizable} if there is a 0-cell $A^\vee$, left dualizable 1-cells $C\in \mathcal{B}(I, A\otimes A^\vee)$ and $E\in \mathcal{B}(A^\vee\otimes A,I)$, and invertible 2-cells for the triangle identities. 
\end{defn}
By \cite[6.18]{cp:iterated} a monoidal bicategory where all 0-cells are 2-dualizable has a canonical shadow valued in a symmetric monoidal category.

\begin{ex}
Recall that $\mathcal{T}$ is a category $\mathbf{T}$ regarded as a bicategory with only identity 2-cells. If $\mathbf{T}$ is symmetric monoidal and all objects of $\mathbf{T}$ are dualizable then all 0-cells of $\mathcal{T}$ are 2-dualizable and the symmetric monoidal trace is a shadow on $\mathcal{T}$.  Then the joint character in \cref{def:joint_character} is applying the character construction of \cref{lab:def:character} twice. 
\end{ex}

With this example we now return to the compatibility with restriction, induction, and coinduction.

\begin{thm}
If all elements of $G$ are {\repdual} with respect to a 2-representation $\rho$, then 
\[R_{\Lambda(\Lambda BG)}^{\Lambda(\Lambda BH)} \mathcal{X}_{\mathcal{X}_\rho}=\mathcal{X}_{\mathcal{X}_{R_{BG}^{BH}} \rho}
\quad \quad I_{\Lambda(\Lambda BH)}^{\Lambda(\Lambda BG)} \mathcal{X}_{\mathcal{X}_\rho}=\mathcal{X}_{\mathcal{X}_{I_{BH}^{BG}} \rho}
\quad \quad  C_{\Lambda(\Lambda BH)}^{\Lambda(\Lambda BG)} \mathcal{X}_{\mathcal{X}_\rho}=\mathcal{X}_{\mathcal{X}_{C_{BH}^{BG}} \rho}\]
\end{thm}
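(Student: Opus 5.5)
The plan is to treat the joint character as a second application of the character construction and then reuse the arguments already given for restriction (\cref{sec:restriction}) and for induction in the classical setting (\cref{thm:intro:induced_classical}). First I will factor the joint character as
\[\Lambda(\Lambda BG)\xto{\Lambda(\mathcal{X}_\rho)}\Lambda\mathcal{T}\xto{\tr}\mathbf{T}(S,S),\]
where the second map sends a 1-cell $(\sh{\rho(g)},\tr\eqref{eq:LambdaGC:2-cell})$ to its symmetric monoidal trace. Shadow dualizability of every $g$ guarantees that $\Lambda(\mathcal{X}_\rho)$ lands in the part of $\Lambda\mathcal{T}$ where this trace is defined.

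For restriction, $\mathcal{X}_{R\rho}=\mathcal{X}_\rho\circ\Lambda B\iota$ by \cref{sec:restriction}. Applying \cref{lambda_of_a_composite} once more gives $\Lambda(\mathcal{X}_\rho\circ\Lambda B\iota)=\Lambda\mathcal{X}_\rho\circ\Lambda\Lambda B\iota$. Composing with the trace then yields $\mathcal{X}_{\mathcal{X}_{R\rho}}=\mathcal{X}_{\mathcal{X}_\rho}\circ\Lambda\Lambda B\iota$, which is the first equality.

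For induction I will argue as in \cref{thm:intro:induced_classical}. The key observation is that the target $\mathbf{T}(S,S)$, viewed as a category with only identity morphisms, is discrete. I will build a functor $(\rho\downarrow(B\iota)^*)^d\to(\mathcal{X}_{\mathcal{X}_\rho}\downarrow(\Lambda\Lambda B\iota)^*)$ as the composite of two functors:
\begin{itemize}
\item the functor on comma categories supplied by \cref{thm:trace_induced}, sending $(\phi,\theta)\mapsto(\mathcal{X}_\phi,\chi\circ\Lambda\theta)$;
\item the analogous functor for the symmetric monoidal trace, sending $(\varphi,\vartheta)\mapsto(\tr\circ\Lambda\varphi,\tr\circ\Lambda\vartheta)$.
\end{itemize}
For the second functor, a transformation into the discrete target is forced to be an equality. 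Applying this to the initial object $I_{BH}^{BG}\rho$ produces a unique comparison 1-cell $I(\mathcal{X}_{\mathcal{X}_\rho})\to\mathcal{X}_{\mathcal{X}_{I\rho}}$. Since the target has only identities, this 1-cell is an identity, which upgrades the mere existence of a map in \cref{intro:induced} to the stated equality. Coinduction is dual: I replace under categories by over categories and initial objects by terminal ones.

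The main obstacle is to justify that the comparison map exists in the first place. This requires two things. First, the induced representation $I_{BH}^{BG}\rho$ must have shadow-dualizable images, so that $\mathcal{X}_{\mathcal{X}_{I\rho}}$ is defined; the natural route is the hypothesis that $I\rho$ lies in $(\rho\downarrow(B\iota)^*)^d$, extended to require shadow dualizability. Second, the induced function $I(\mathcal{X}_{\mathcal{X}_\rho})$ must be computed in the discrete category $\mathbf{T}(S,S)$, so that \emph{equality} rather than a mere comparison map holds. For that second point I would argue that a left Kan extension into a discrete category exists only when the values agree along connected components of the comma categories. I would also check that simultaneous-conjugation classes in $\Lambda\Lambda BG$ are handled correctly here.
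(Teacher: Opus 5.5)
Your restriction argument is the paper's. Your induction argument is the paper's rearranged. The paper first gets $I_{\Lambda BH}^{\Lambda BG}\mathcal{X}_\rho\to\mathcal{X}_{I_{BH}^{BG}\rho}$ from \cref{intro:induced}, takes symmetric monoidal characters, and then applies \cref{thm:intro:induced_classical} to $\mathcal{X}_\rho$. You instead compose the two comma-category functors and use initiality once, so you never need $I_{\Lambda BH}^{\Lambda BG}\mathcal{X}_\rho$ at all.

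There is, however, a genuine gap at ``a transformation into the discrete target is forced to be an equality.'' Discreteness puts the burden the other way. An object of $\mathcal{X}_{\mathcal{X}_\rho}\downarrow(\Lambda\Lambda B\iota)^*$ is a functor $\Phi$ with $\Phi\circ\Lambda\Lambda B\iota=\mathcal{X}_{\mathcal{X}_\rho}$ on the nose. So to send $(\varphi,\vartheta)$ there you must \emph{prove} $\tr(\mathcal{X}_\rho(h'))=\tr(\varphi(\iota h'))$ for every commuting pair $(h,h')$ in $H$. Naturality of $\vartheta$ only gives $\varphi(\iota h')\circ\vartheta_h=\vartheta_h\circ\mathcal{X}_\rho(h')$. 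This yields equal traces when $\vartheta_h$ is invertible (by cyclicity of the trace), but not in general.

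Your formula $\tr\circ\Lambda\vartheta$ in fact presupposes invertibility. $\Lambda\vartheta$ requires each $\vartheta_h$ to be a dualizable 1-cell of $\mathcal{T}$, and since $\mathcal{T}$ has only identity 2-cells these are exactly the isomorphisms of $\mathbf{T}$. This is the condition of \cref{def:conditions_for_induction} when $\mathcal{B}=\mathcal{T}$. At the object you care about, the components are $\tr(\zeta_h^{-1})\colon\sh{\rho(h)}\to\sh{(I_{BH}^{BG}\rho)(\iota h)}$ from \cref{lem:0:cell:functor} (your $\theta$ is the paper's $\zeta$). Nothing makes these invertible.

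The step also proves too much. You never use that $I_{BH}^{BG}\rho$ is initial, only that it lies in $(\rho\downarrow(B\iota)^*)^d$. So your argument would give $\mathcal{X}_{\mathcal{X}_\sigma}\circ\Lambda\Lambda B\iota=\mathcal{X}_{\mathcal{X}_\rho}$ for every object $(\sigma,\zeta)$ there. In $2\Vect_k$ take $\iota=\id_G$, take $\rho$ and $\sigma$ to be the trivial 2-representations on $[1]$ and $[2]$, and take $\zeta=R_1(k)\colon[1]\to[2]$ with $\zeta_g$ built from unitors. This is an object of the $d$-part, but $\mathcal{X}_{\mathcal{X}_\rho}\equiv 1$ while $\mathcal{X}_{\mathcal{X}_\sigma}\equiv 2$. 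Consistently, $\tr(\zeta_g^{-1})$ is the non-invertible summand inclusion $k\to k^2$.

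So you need an additional hypothesis. One option is that every $\tr(\zeta_h^{-1})$ is invertible. Another is to assume outright that $\mathcal{X}_{\mathcal{X}_{I_{BH}^{BG}\rho}}\circ\Lambda\Lambda B\iota=\mathcal{X}_{\mathcal{X}_\rho}$, a 2-categorical ``$\mathrm{Res}\,\mathrm{Ind}\,\chi=\chi$,'' which one should not expect in general. List it next to the two conditions you already flagged. The paper's proof passes over the same point (``morphisms become equalities,'' referring to the proof of \cref{thm:intro:induced_classical}), so citing it will not close the gap.

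Your second flagged concern is also sharper than you state. $\mathcal{X}_{\mathcal{X}_\rho}\downarrow(\Lambda\Lambda B\iota)^*$ is itself discrete, so it has an initial object only if there is exactly one extension. When $\mathbf{T}(S,S)$ has more than one element, this requires every simultaneous conjugacy class of commuting pairs in $G$ to meet the image of $H$. For a proper subgroup of a finite group this already fails at $(g,e)$ with $g$ not conjugate into $H$.
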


\begin{proof}
The case of restriction follows from \cref{ex:class_restriction,sec:restriction}.  

By \cref{intro:induced}, we have 
$ I_{\Lambda BH}^{\Lambda BG} \mathcal{X}_\rho\xrightarrow{\exists!}\mathcal{X}_{I_{BH}^{BG}} \rho$.  Taking the character  of both sides and noting that, as in the proof of \cref{thm:intro:induced_classical}, morphisms become equalities we have 
\[ \mathcal{X}_{I_{\Lambda BH}^{\Lambda BG} \mathcal{X}_\rho}=\mathcal{X}_{\mathcal{X}_{I_{BH}^{BG}} \rho}.\]
By \cref{thm:intro:induced_classical}, 
\[ I_{\Lambda (\Lambda BH)}^{\Lambda(\Lambda BG)} \mathcal{X}_{\mathcal{X}_\rho}= \mathcal{X}_{I_{\Lambda BH}^{\Lambda BG} \mathcal{X}_\rho}\]
completing the proof for induction.  The case for coinduction is similar. 
\end{proof}

We also note the  following result which is an immediate consequence of the main results of \cite{BZN,cp:iterated}. 
\begin{thm}\label{thm:ind_of_order}
For a 2-representation in a monoidal bicategory where all 0-cells are dualizable, all group elements are {\repdual} and the joint character is independent of order of the commuting elements. 
\end{thm}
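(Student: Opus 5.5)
The proof has two parts. First I show that every $g$ is \repdual{}, using the canonical shadow on a monoidal bicategory all of whose $0$-cells are $2$-dualizable (\cref{defn:2_dual}). Then I identify the joint trace of a commuting pair $(g,h)$ with an iterated trace in the sense of \cite{BZN,cp:iterated}, and invoke their symmetry theorem.

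\emph{Step 1: shadow dualizability.} By \cite[6.18]{cp:iterated} the shadow of an endo-$1$-cell $M\colon A\to A$ is the composite
\[I\xto{C} A\otimes A^\vee\xto{M\otimes \id} A\otimes A^\vee\cong A^\vee\otimes A\xto{E} I.\]
It lands in $\mathbf{T}=\B(I,I)$ with the monoidal product induced by $\otimes$. Since $C$ and $E$ are left dualizable $1$-cells, this composite is dualizable in $\B(I,I)$ whenever $M$ is dualizable as a $1$-cell. Each $\rho(g)$ is dualizable: it is invertible up to $2$-isomorphism with inverse $\rho(g^{-1})$, via $\phi_{g,g^{-1}}$ and $\phi_e$ from \eqref{phi_2_rep_pair} and \eqref{phi_2_rep_unit}. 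Hence $\sh{\rho(g)}$ is dualizable in $\mathbf{T}$, so every $g$ is \repdual{} in the sense of \cref{def:repdual}.

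\emph{Step 2: identify the joint trace.} For commuting $g,h$, \cref{lambda_of_a_composite} provides the $2$-cell \eqref{eq:LambdaGC:2-cell}
\[\rho(g)\odot\rho(h)\Rightarrow\rho(h)\odot\rho(g).\]
By \eqref{eq:character_on_morphism} its bicategorical trace is an endomorphism of $\sh{\rho(g)}$, and the joint trace is the symmetric monoidal trace of that endomorphism. Write $V=\rho(\ast)$. The pair $(\rho(g),\rho(h))$ together with this commuting $2$-cell is exactly the input to the iterated trace of \cite{BZN,cp:iterated}, namely a pair of commuting endomorphisms of the $2$-dualizable object $V$. Their main theorem states that the two orders of iteration agree:
\[\tr\bigl(\tr(\rho(g)\odot\rho(h)\Rightarrow\rho(h)\odot\rho(g))\bigr)=\tr\bigl(\tr(\rho(h)\odot\rho(g)\Rightarrow\rho(g)\odot\rho(h))\bigr).\]
This is the required independence of order.

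\emph{Main obstacle.} The delicate point is that the commuting $2$-cell for $(h,g)$, obtained from \cref{lambda_of_a_composite} via $\phi_{g,h}$ and $\phi_{h,g}^{-1}$, must be the inverse of (equivalently, correspond under the symmetry to) the one for $(g,h)$, as the convention in \cite{cp:iterated} requires. This is checked directly: it reduces to $\phi_{h,g}\phi_{h,g}^{-1}=\id$ and the associativity coherence of $\rho$. I also need to match the shadow of \cite[6.18]{cp:iterated} with the one used in \cref{thm:character}, but this holds by definition in that setting.
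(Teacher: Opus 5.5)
Your Step 1 is fine: $\rho(g)$ is an equivalence, so it is dualizable on whichever side $C$ and $E$ are. Hence $\sh{\rho(g)}$ is a composite of dualizable $1$-cells $I\to I$.

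The gap is the displayed equation in Step 2. Write $f=\phi_{h,g}^{-1}\phi_{g,h}\colon\rho(g)\odot\rho(h)\Rightarrow\rho(h)\odot\rho(g)$. You correctly check that the $2$-cell attached to $(h,g)$ is $f^{-1}$. You then assert that tracing out $\rho(h)$ from $f$ and tracing out $\rho(g)$ from $f^{-1}$ give the same iterated trace. That is not what \cite{BZN,cp:iterated} prove, and it is false. A counterexample:
\begin{itemize}
\item take the $0$-cell $[1]$ of $2\Vect_{\mathbb{C}}$ and $G=(\mathbb{Z}/3)^2$;
\item set $\rho(x)=\mathbb{C}$ for all $x$, with $\phi_{x,y}$ multiplication by $c(x,y)=\zeta^{x_2y_1}$, $\zeta=e^{2\pi i/3}$ (a normalized $2$-cocycle, since it is bilinear).
\end{itemize}
All traces here are linear in the $2$-cell, and the $2$-cell for $(g,h)$ is multiplication by $c(g,h)/c(h,g)$. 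So $\mathcal{X}_{\mathcal{X}_\rho}(g,h)=c(g,h)/c(h,g)$. For $g=(1,0)$, $h=(0,1)$ this is $\zeta^{-1}$, whereas $\mathcal{X}_{\mathcal{X}_\rho}(h,g)=\zeta$. The two sides of your equation are inverse scalars, not equal ones.

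What the iterated-trace theorem actually does is compare the two iterated traces of one and the same $2$-cell $f\colon M\odot N\Rightarrow N\odot M$.
\begin{itemize}
\item Tracing out $N$ is \cref{defn:bicat_trace} directly.
\item Tracing out $M$ in that form needs a $2$-cell $N\odot X\Rightarrow X\odot N$. This is produced from $f$ by taking its mate along the dual of $M$ (equivalently, using duality on the other side), not by inverting $f$.
\end{itemize}
For $M=\rho(g)$ the dual is $\rho(g^{-1})$. By coherence for $\rho$, the mate of $f$ is the $2$-cell that $\Lambda\rho$ assigns to the $1$-cell $g^{-1}\colon h\to h$ of $\Lambda BG$.

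So in the paper's conventions the cited results give $\mathcal{X}_{\mathcal{X}_\rho}(g,h)=\mathcal{X}_{\mathcal{X}_\rho}(h,g^{-1})$. In the example, $c(h,g^{-1})/c(g^{-1},h)=\zeta^{-1}$ as well. This is the orientation-preserving exchange of the two cycles of the torus, not invariance under $(g,h)\mapsto(h,g)$.

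The paper's own justification is only the citation of \cite{BZN,cp:iterated}, so your route is the intended one. But the conclusion must be read either as independence of the order of the two traces of a fixed $2$-cell, or as the $(h,g^{-1})$ identity. Read literally as $\mathcal{X}_{\mathcal{X}_\rho}(g,h)=\mathcal{X}_{\mathcal{X}_\rho}(h,g)$, the statement fails, so this step cannot be repaired.
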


\subsection{Conjectural generalizations}
We expect the bicategory $\Lambda \mathcal{B}$, functor $\Lambda$ and the trace functor are fragments of a larger structure, but we regard this largely as motivation since a tricategory is a fairly unwieldy structure.  Our conjectural statements are the following. 
\begin{conj}[Compare to \cref{Lambda_bicat}]\label{conjecture_tricat}
    There is a tricategory $\Lambda_3\B$ where the 
    \begin{itemize}
        \item 0-cells: the 0-cells of $\B$.
        \item 1-cells $R$ to $S$: the 1-cells of $\B$ from $R$ to $S$.
        \item 2-cells $P$ to $Q$: pairs $(M,\beta)$ where 
        \[\beta\colon P\odot M\Rightarrow M\odot Q\]
        is a 2-cell in $\B$.
        \item 3-cells $(M,\beta)$ to $(N,\gamma)$ are isomorphism 2-cells $\psi\colon M\to N$ so that 
            \[\xymatrix{P\odot M \ar@{=>}[d]_{\id\odot\psi} \ar@{=>}[r]^\beta & M\odot Q \ar@{=>}[d]^{\psi\odot\id}\\
    P\odot N \ar@{=>}[r]_\gamma & N\odot Q}\]
    commutes.
    \end{itemize}
\end{conj}

\begin{conj}[Compare to \cref{lambda_of_a_composite}]\label{conjecture_tricat_lambda}
    The construction of $\Lambda$ on bicategories and strong functors of bicategories extends to a (appropriately strong) endofunctor on the tricategory of bicategories. 
\end{conj}

\begin{conj}[Compare to \cref{thm:character}]
    The functor in \cref{thm:character} is an example of a shadow on a tricategory (when that is defined!).
\end{conj}

\section{Twisted endomorphism bicategories}\label{sec:remnant}
In this section we verify the existence of the endomorphism bicategory $\Lambda \mathcal{B}$ as in the following statement.

\restateLambdabicat*

We note here that the assumption that some of the 1-cells are dualizable and some 2-cells are isomorphisms are forward looking assumptions for \cref{sec:traces}.  They are not required to define a bicategory and lead to more work in this section.

We start by verifying that $\Lambda \mathcal{B}(P,Q)$ is a category for endomorphism 1-cells $P$ and $Q$ of $\mathcal{B}$.

\begin{lem}\label{LambdaBPQ_Cat}
    For endomorphism 1-cells $P$ and $Q$  in $\B$, $\Lambda \B(P, Q)$ is a category.
\end{lem}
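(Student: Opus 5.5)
We check the category axioms for $\Lambda \B(P,Q)$ directly. Objects are pairs $(M,\psi)$ with $M\colon R\to R$ a dualizable 1-cell (here $P\in\B(R,R)$ and $Q\in\B(S,S)$, so $M\in\B(R,S)$) and $\psi\colon P\odot M\Rightarrow M\odot Q$ a 2-cell. Morphisms $(M,\psi)\to(M',\psi')$ are invertible 2-cells $\beta\colon M\Rightarrow M'$ satisfying
\[(\beta\odot\id_Q)\circ\psi=\psi'\circ(\id_P\odot\beta).\]
Composition is vertical composition of 2-cells in $\B$, and the identity on $(M,\psi)$ is $\id_M$. Associativity and the unit laws are then inherited from the category $\B(R,S)$. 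The only content is that these operations stay inside $\Lambda\B(P,Q)$.

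\emph{Identities.} The identity $\id_M$ is invertible. Since $\odot$ is a functor $\B(R,R)\times\B(R,S)\to\B(R,S)$, we have $\id_P\odot\id_M=\id_{P\odot M}$ and $\id_M\odot\id_Q=\id_{M\odot Q}$. The required square therefore reduces to $\psi=\psi$.

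\emph{Composition.} Take morphisms $\beta\colon(M,\psi)\to(M',\psi')$ and $\beta'\colon(M',\psi')\to(M'',\psi'')$. The composite $\beta'\circ\beta$ is invertible, with inverse $\beta^{-1}\circ\beta'^{-1}$. For the square, functoriality of $\odot$ (the interchange law) gives
\[(\beta'\beta)\odot\id_Q=(\beta'\odot\id_Q)(\beta\odot\id_Q)\quad\text{and}\quad \id_P\odot(\beta'\beta)=(\id_P\odot\beta')(\id_P\odot\beta).\]
So the square for $\beta'\circ\beta$ is obtained by pasting the two commuting squares for $\beta$ and $\beta'$ vertically, and hence commutes.

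No step here is a real obstacle; the argument is bookkeeping. The one place needing care is interchange, that whiskering by a fixed 1-cell is functorial in the 2-cell. This is exactly functoriality of the horizontal composition functor in $\B$. Because each axiom is an equation among 2-cells between the same pair of 1-cells, no associators or unitors enter at this stage. The dualizability of $M$ plays no role, since it is a condition on objects only and is not changed by morphisms.
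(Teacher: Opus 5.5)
Your proposal is correct and follows the paper's own proof. Composition is vertical composition of 2-cells in $\B$, closure under composition comes from stacking the two commuting squares, $\id_M$ is the identity, and associativity and the unit laws are inherited from $\B$; you also spell out the interchange law and the invertibility of composites, which the paper leaves implicit (note the slip where you first write $M\colon R\to R$ before correctly taking $M\in\B(R,S)$).
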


\begin{proof}
The composition of morphisms in $\Lambda \B(P,Q)$, denoted $\Box$, is given by composing the associated 2-cells from $\B$.  The required diagrams commute since they correspond to vertically stacked commutative diagrams. 
\[\xymatrix{P\odot L \ar@{=>}[r]^\delta \ar@{=>}[d]_{\id\odot\phi} & L\odot Q \ar@{=>}[d]^{\phi\odot\id}\\
P\odot M \ar@{=>}[d]_{\id\odot\psi} \ar@{=>}[r]^\beta & M\odot Q \ar@{=>}[d]^{\psi\odot\id}\\
P\odot N \ar@{=>}[r]^\gamma & N\odot Q
}.\]

       \underline{Associativity}: Composition of 2-cells in $\Lambda \B$ is composition in $\B$. 
       Since composition of 2-cells in $\B$ is associative,  composition of morphisms in $\Lambda \B(P, Q)$ is associative. 
        
        \underline{Identity Morphism}: Let $(M, \beta): P \to Q$ be an object in $\Lambda \B(P, Q)$. Then $\id_M$ defines an endomorphism of $(M,\beta)$ since the following diagram commutes.  
        \[\xymatrix{P\odot M \ar@{=>}[d]_{\id_P\odot\id_M} \ar@{=>}[r]^\beta & M\odot Q \ar@{=>}[d]^{\id_M\odot \id_Q}\\
        P\odot M \ar@{=>}[r]_\beta & M\odot Q
        }\] 
        To see that 
        \[\ltwo{\id}_M\Box \ltwo{\psi} = \ltwo{\psi}\quad \text{and}\quad  \ltwo{\psi}\Box \ltwo{\id}_N = \ltwo{\psi}\] 
        for a 2-cell $\ltwo{\psi}: (M, \beta) \Rightarrow (N, \gamma)$ in $\Lambda \B(P, Q)$, 
        it is enough to note that the composition $\Box$ is composition of 2-cells in $\B$.

Since the structure maps are inherited from $\mathcal{B}$ the required diagrams commute. 
\end{proof}

Given objects $(M, \beta)\in \Lambda\B( P,Q)$ and $(N, \gamma)\in \Lambda \B( Q,T)$ in $\Lambda \B$, let 
$\boxmaps{\beta}{\gamma}$ 
be the composite 
% https://q.uiver.app/#q=WzAsMyxbMCwwLCJQXFxvZG90IE1cXG9kb3QgTiJdLFsxLDEsIk0gXFxvZG90IFFcXG9kb3QgTiJdLFsyLDAsIk0gXFxvZG90IE5cXG9kb3QgVCJdLFswLDIsIlxcYm94bWFwc3tcXGJldGF9e1xcZ2FtbWF9IiwwLHsibGV2ZWwiOjJ9XSxbMCwxLCJ7XFxiZXRhXFxvZG90IFxcbWF0aHJte2lkfV9OfSIsMix7ImxldmVsIjoyfV0sWzEsMiwie1xcbWF0aHJte2lkfV9NXFxvZG90IFxcZ2FtbWF9IiwyLHsibGV2ZWwiOjJ9XV0=&macro_url=https%3A%2F%2Fwww.overleaf.com%2Fproject%2F64c9c0fcea71c453e68a663b
\[\begin{tikzcd}
	{P\odot M\odot N} && {M \odot N\odot T} \\
	& {M \odot Q\odot N}
	\arrow["{\boxmaps{\beta}{\gamma}}", Rightarrow, from=1-1, to=1-3]
	\arrow["{{\beta\odot \mathrm{id}_N}}"', Rightarrow, from=1-1, to=2-2]
	\arrow["{{\mathrm{id}_M\odot \gamma}}"', Rightarrow, from=2-2, to=1-3]
\end{tikzcd}\]
Then the  bicategorical composition in $\Lambda \B$ of $(M,\beta)$ and $(N,\gamma)$, denoted  
\[(M, \beta) \boxdot (N, \gamma),\]
is $(M\odot N, \boxmaps{\beta}{\gamma})$. 

\begin{notn}\label{notation_box_dot}
   We will abuse notation and  also use $(M, \beta) \boxdot (N, \gamma)$  to indicate  the  map $\boxmaps{\beta}{\gamma}$.
\end{notn}

\begin{lem}\label{LambdaB_assoc}
 For 1-cells $(L, \delta): P\to Q, (M, \beta): Q\to R$, and $(N, \gamma): R \to S$ in $\Lambda\B$, the associator $a_{L,M,N}\colon (L\odot M)\odot N\to L\odot (M\odot N)$ in $\B$ is a 2-cell 
\[(L, \delta)\boxdot\left[(M, \beta)\boxdot(N, \gamma)\right]
\to (\left[L, \delta)\boxdot(M, \beta)\right]\boxdot(N, \gamma)\] in $\Lambda\B$.
\end{lem}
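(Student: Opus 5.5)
Note first that the direction in the statement is slightly garbled: $a_{L,M,N}$ goes from $(L\odot M)\odot N$ to $L\odot(M\odot N)$, so the claim is that $a_{L,M,N}$ is a 2-cell
\[\left[(L, \delta)\boxdot(M, \beta)\right]\boxdot(N, \gamma)\Rightarrow (L, \delta)\boxdot\left[(M, \beta)\boxdot(N, \gamma)\right].\]
(Its inverse then gives the direction written in the statement, since $a$ is invertible.) By the description of 2-cells in \cref{Lambda_bicat}, $a_{L,M,N}$ is an isomorphism 2-cell between the underlying 1-cells, so the only thing to check is the compatibility square. We must show that the two composites $P\odot (L\odot M)\odot N\Rightarrow L\odot (M\odot N)\odot S$ agree: one goes around via $(\id_P\odot a)$ and then $\boxmaps{\delta}{(\boxmaps{\beta}{\gamma})}$, the other via $\boxmaps{(\boxmaps{\delta}{\beta})}{\gamma}$ and then $(a\odot \id_S)$.

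The first step is to unpack both twisted maps using \cref{notation_box_dot}. By definition,
\[\boxmaps{(\boxmaps{\delta}{\beta})}{\gamma}=(\id_{L\odot M}\odot \gamma)\circ\big((\id_L\odot \beta)\circ(\delta\odot\id_M)\big)\odot \id_N,\]
and
\[\boxmaps{\delta}{(\boxmaps{\beta}{\gamma})}=\id_L\odot\big((\id_M\odot\gamma)\circ(\beta\odot\id_N)\big)\circ(\delta\odot \id_{M\odot N}).\]
Functoriality of $\odot$ (the interchange law) splits each into a composite of three whiskered 2-cells: $\delta$ whiskered by $N$ (and $M$), then $\beta$ whiskered by $L$ and $N$, then $\gamma$ whiskered by $L$ and $M$. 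The bracketings are different in the two expressions.

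The second step is to insert associators. We use naturality of $a$ in each variable to slide $a_{L,M,N}$ (together with the associators implicit in whiskerings such as $(\delta\odot\id_M)\odot\id_N$ versus $\delta\odot\id_{M\odot N}$) past each of the three stages. Naturality of $a$ with respect to $\delta$, $\beta$ and $\gamma$ respectively lets us convert one composite into the other, up to associators among four-fold composites like $P\odot L\odot M\odot N$. The remaining equality of associator composites is an instance of the pentagon, or more simply of the coherence theorem \cite{bicat_coherence} already invoked in the Remark: any two composites of structure isomorphisms with the same source and target bracketing agree.

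The main obstacle is purely bookkeeping. Since the paper suppresses associators by coherence, the cleanest route is to argue that after strictification (replacing $\B$ by a biequivalent 2-category, or appealing directly to coherence) both composites equal
\[(\id_{L\odot M}\odot\gamma)\circ(\id_L\odot\beta\odot\id_N)\circ(\delta\odot\id_{M\odot N}),\]
and $a$ becomes an identity. I expect the care needed to be in checking that the coherence theorem applies to diagrams that mix structure cells with the non-structural cells $\delta,\beta,\gamma$. This is legitimate because those cells enter only through whiskering, and naturality of $a$ handles them. I would write out the single large diagram whose cells are the naturality squares for $a$ and one pentagon-type coherence cell.
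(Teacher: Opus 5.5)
Your proposal is correct and is essentially the paper's argument. The paper checks the square with vertical maps $\id_P\odot a_{L,M,N}$ and $a_{L,M,N}\odot\id_S$, drawn from $(L\odot M)\odot N$ to $L\odot(M\odot N)$ (which agrees with your correction of the direction); it suppresses associators by coherence and observes that both twisted maps reduce to $(\id_{L\odot M}\odot\gamma)\circ(\id_L\odot\beta\odot\id_N)\circ(\delta\odot\id_{M\odot N})$, and your optional explicit version via naturality squares for $a$ and pentagon cells is just an unpacking of that coherence step.
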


   In particular, $a_{-,-,-}$ is an associator for $\Lambda\B$ satisfying the pentagon axiom.

\begin{proof}
    We need to show 
the diagram in \eqref{eq:assocator_1-cells} commutes. 
    \begin{equation}\label{eq:assocator_1-cells}
    \xymatrix@C=4cm{P\odot \left[L\odot (M\odot N)\right] \ar@{=>}[r]^{(L, \delta)\boxdot\left[(M, \beta)\boxdot(N, \gamma)\right]} \ar@{<=}[d]_{\id_P\odot a_{L, M, N}} &  \left[L\odot (M\odot N)\right]\odot S \ar@{<=}[d]^{a_{L, M, N}\odot\id_S}\\
    P\odot \left[(L\odot M)\odot N\right] \ar@{=>}[r]_{\left[(L, \delta)\boxdot(M, \beta)\right]\boxdot(N, \gamma)} & \left[(L\odot M)\odot N\right]\odot S}.
    \end{equation}
The top and bottom maps in \eqref{eq:assocator_1-cells} are  the 2-cells 
in \cref{eq:assoctivity_Lambda_B}.  Here we have suppressed the associators  \cite{bicat_coherence,MP:coherence}.  This has the additional benefit that it is easier to see that the diagram in  \eqref{eq:assocator_1-cells} commutes. 
\begin{figure}
     \centering
     \begin{subfigure}[b]{.45\textwidth}\label{lamdba_b_2_1}
% https://q.uiver.app/#q=WzAsNCxbMSwwLCJMXFxvZG90IE1cXG9kb3QgTlxcb2RvdCBTIl0sWzAsMCwiUFxcb2RvdCBMXFxvZG90IE1cXG9kb3QgTiAiXSxbMSwxLCJMXFxvZG90IE1cXG9kb3QgUlxcb2RvdCBOKSAiXSxbMCwxLCJMXFxvZG90IChRXFxvZG90IE0pXFxvZG90IE4iXSxbMSwyLCJ7KEwsXFxkZWx0YSlcXGJveGRvdChNLCBcXGJldGEpXFxvZG90XFxtYXRocm17aWR9X059IiwxLHsibGV2ZWwiOjJ9XSxbMiwwLCJ7XFxtYXRocm17aWR9X3tMXFxvZG90IE19XFxvZG90IFxcZ2FtbWF9IiwyLHsibGV2ZWwiOjJ9XSxbMSwzLCJ7XFxkZWx0YSBcXG9kb3QgXFxtYXRocm17aWR9X01cXG9kb3QgXFxtYXRocm17aWR9X059IiwyLHsibGV2ZWwiOjJ9XSxbMywyLCJ7XFxtYXRocm17aWR9X0xcXG9kb3QgXFxiZXRhXFxvZG90IFxcbWF0aHJte2lkfV9OfSIsMix7ImxldmVsIjoyfV0sWzEsMCwieyhMLCBcXGRlbHRhKVxcYm94ZG90KE0sIFxcYmV0YSlcXGJveGRvdChOLCBcXGdhbW1hKX0iLDAseyJsZXZlbCI6Mn1dXQ==
\adjustbox{scale=.9,center}{\begin{tikzcd}[column sep=1in,row sep=.5in]
	{P\odot L\odot M\odot N } & {L\odot M\odot N\odot S} \\
	{L\odot (Q\odot M)\odot N} & {L\odot M\odot R\odot N) }
	\arrow["{{(L, \delta)\boxdot(M, \beta)\boxdot(N, \gamma)}}", Rightarrow, from=1-1, to=1-2]
	\arrow["{{\delta \odot \mathrm{id}_M\odot \mathrm{id}_N}}"', Rightarrow, from=1-1, to=2-1]
	\arrow["{{(L,\delta)\boxdot(M, \beta)\odot\mathrm{id}_N}}"{description}, Rightarrow, from=1-1, to=2-2]
	\arrow["{{\mathrm{id}_L\odot \beta\odot \mathrm{id}_N}}"', Rightarrow, from=2-1, to=2-2]
	\arrow["{{\mathrm{id}_{L\odot M}\odot \gamma}}"', Rightarrow, from=2-2, to=1-2]
\end{tikzcd}}
\caption{$\left[(L, \delta)\boxdot(M, \beta)\right]\boxdot(N, \gamma)$}
\end{subfigure}
\hfill 
     \begin{subfigure}[b]{0.45\textwidth}\label{lamdba_b_1_2}
% https://q.uiver.app/#q=WzAsNCxbMSwwLCJMXFxvZG90IE1cXG9kb3QgTlxcb2RvdCBTIl0sWzAsMCwiIFBcXG9kb3QgTCBcXG9kb3QgTVxcb2RvdCBOIl0sWzAsMSwiTFxcb2RvdCBRIFxcb2RvdCBNXFxvZG90IE4gIl0sWzEsMSwiTFxcb2RvdCBNXFxvZG90IFJcXG9kb3QgTiAiXSxbMSwyLCJ7XFxkZWx0YVxcb2RvdCBcXG1hdGhybXtpZH1fe01cXG9kb3QgTn19IiwyLHsibGV2ZWwiOjJ9XSxbMSwwLCJ7KEwsIFxcZGVsdGEpXFxib3hkb3QoTSwgXFxiZXRhKVxcYm94ZG90KE4sIFxcZ2FtbWEpfSIsMCx7ImxldmVsIjoyfV0sWzIsMywie1xcbWF0aHJte2lkfV9MXFxvZG90IFxcYmV0YVxcb2RvdCBcXG1hdGhybXtpZH1fTn0iLDIseyJsZXZlbCI6Mn1dLFsyLDAsIntcXG1hdGhybXtpZH1fTFxcb2RvdCAoKE0sIFxcYmV0YSlcXGJveGRvdChOLCBcXGdhbW1hKSl9IiwxLHsibGV2ZWwiOjJ9XSxbMywwLCJ7XFxtYXRocm17aWR9X0xcXG9kb3QgXFxtYXRocm17aWR9X01cXG9kb3QgXFxnYW1tYX0iLDIseyJsZXZlbCI6Mn1dXQ==
\adjustbox{scale=.9,center}{\begin{tikzcd}[column sep=1in,row sep=.5in]
	{ P\odot L \odot M\odot N} & {L\odot M\odot N\odot S} \\
	{L\odot Q \odot M\odot N } & {L\odot M\odot R\odot N }
	\arrow["{{(L, \delta)\boxdot(M, \beta)\boxdot(N, \gamma)}}", Rightarrow, from=1-1, to=1-2]
	\arrow["{{\delta\odot \mathrm{id}_{M\odot N}}}"', Rightarrow, from=1-1, to=2-1]
	\arrow["{{\mathrm{id}_L\odot ((M, \beta)\boxdot(N, \gamma))}}"{description}, Rightarrow, from=2-1, to=1-2]
	\arrow["{{\mathrm{id}_L\odot \beta\odot \mathrm{id}_N}}"', Rightarrow, from=2-1, to=2-2]
	\arrow["{{\mathrm{id}_L\odot \mathrm{id}_M\odot \gamma}}"', Rightarrow, from=2-2, to=1-2]
\end{tikzcd}}
\caption{$(L, \delta)\boxdot\left[(M, \beta)\boxdot(N, \gamma)\right]$}
\end{subfigure}
\caption{Compositions of three 1-cells in $\Lambda \B$}\label{eq:assoctivity_Lambda_B}
\end{figure}
\end{proof}

\begin{rmk}  In most of the following diagrams we will suppress structure morphisms as we did in \cref{eq:assoctivity_Lambda_B}.  This is justified by the coherence results in \cite{bicat_coherence,MP:coherence}.   We will retain them in examples like \eqref{eq:assocator_1-cells} where they are essential to understanding the diagram.
\end{rmk}

Let $P: V\to V$ be an object in $\Lambda \B$. Let $(U_V, r\ell^{-1}): P\to P$  be the 1-cell in $\Lambda \B$ equipped with the composition \[\xymatrix{P\odot U_V \ar@{=>}[r]^-r & P \ar@{=>}[r]^-{\ell^{-1}} & U_V \odot P,} \] where $U_V$ is the unit object for $V$ in $\B$ and $r, \ell$ are the right and left unitors in $\B$, respectively. 

\begin{lem}\label{LambdaB_unit} Let  $N: V\to R$ be a 1-cell in $\B$ and $(N, \gamma): P\to Q$ be a 1-cell in $\Lambda \B$.
The unitor maps 
\[r\colon N\odot U_R\to N\text{ and }\ell\colon U_V\odot  N\to N\] 
in $\B$ are 2-cells 
\[(N, \gamma)\boxdot (U_R, \ell^{-1}r) \Rightarrow (N, \gamma)\text{ and } (U_V, r^{-1}\ell)\boxdot (N, \gamma)\Rightarrow (N, \gamma)\] in $\Lambda \B$. 
\end{lem}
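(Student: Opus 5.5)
We must check that $r$ and $\ell$ satisfy the condition defining 2-cells in $\Lambda\B$ (\cref{Lambda_bicat}): each is an invertible 2-cell of $\B$, so it remains to show two squares commute. Note first that $(U_R,\ell^{-1}r)$ is a 1-cell $Q\to Q$, since $U_R$ is dualizable (with dual $U_R$, coevaluation and evaluation given by unitors). By the definition of $\boxdot$, the composite $(N,\gamma)\boxdot(U_R,\ell^{-1}r)$ is the 1-cell $(N\odot U_R,\ \boxmaps{\gamma}{\ell^{-1}r})$, where
\[\boxmaps{\gamma}{\ell^{-1}r}\colon P\odot N\odot U_R\xRightarrow{\gamma\odot\id} N\odot Q\odot U_R\xRightarrow{\id\odot r} N\odot Q\xRightarrow{\id\odot\ell^{-1}} N\odot U_R\odot Q .\]
The condition for $r$ is then the commutativity of
\[\xymatrix{P\odot N\odot U_R \ar@{=>}[d]_{\id_P\odot r} \ar@{=>}[r]^{\boxmaps{\gamma}{\ell^{-1}r}} & N\odot U_R\odot Q \ar@{=>}[d]^{r\odot\id_Q}\\
P\odot N \ar@{=>}[r]_{\gamma} & N\odot Q.}\]

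The plan is to paste this square out of three smaller ones. First, naturality of the associator and unitor in $\B$ (interchange for $\odot$) gives $(\id_N\odot r)\circ(\gamma\odot\id_{U_R})=\gamma\circ(\id_{P}\odot r)$, once the associators $(P\odot N)\odot U_R\cong P\odot(N\odot U_R)$ are inserted; here the relevant triangle identity is $r_{P\odot N}=(\id_P\odot r_N)\circ a$. Second, the remaining segment $N\odot Q\xRightarrow{\id\odot\ell^{-1}}N\odot U_R\odot Q\xRightarrow{r\odot\id}N\odot Q$ is the identity by the triangle axiom of the bicategory, $(r_N\odot\id_Q)\circ a^{-1}\circ(\id_N\odot\ell_Q^{-1})=\id$. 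Combining these two facts, the top-then-right path equals $\gamma\circ(\id\odot r)$, which is the left-then-bottom path.

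The case of $\ell$ is symmetric. Here $(U_V,r^{-1}\ell)\boxdot(N,\gamma)$ has 2-cell
\[P\odot U_V\odot N\xRightarrow{r\odot\id}P\odot N\xRightarrow{\ell^{-1}\odot\id}U_V\odot P\odot N\xRightarrow{\id\odot\gamma}U_V\odot N\odot Q .\]
Postcomposing with $\ell\odot\id_Q$ and using naturality of $\ell$, i.e. $(\ell_N\odot\id_Q)\circ(\id_{U_V}\odot\gamma)=\gamma\circ(\ell_{P\odot N})$ up to associators, reduces the claim to the identity $(\ell_P\odot \id_N)\circ(\ell^{-1}_P\odot\id_N)\circ(r_P\odot\id_N)=\id_P\odot\ell_N$. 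This follows from the triangle axiom $(r_P\odot\id_N)=(\id_P\odot\ell_N)\circ a$.

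The main obstacle is not conceptual but bookkeeping. The diagrams above are stated with associators suppressed, and the coherence theorem \cite{bicat_coherence,MP:coherence} justifies this. However, as with \eqref{eq:assocator_1-cells}, the very maps being checked are structure maps, so I would write the associators explicitly at the two spots where the triangle axiom is invoked, to make sure the unitors on $N\odot U_R\odot Q$ are matched correctly. Finally, I would note that these 2-cells are natural in $(N,\gamma)$ and satisfy the triangle identity with the associator of \cref{LambdaB_assoc}, since all of these are inherited verbatim from $\B$.
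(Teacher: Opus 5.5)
Your proof is correct and is essentially the paper's argument: \cref{existremunitor} pastes together the same pieces you use, namely naturality of $r$ against $\gamma$, the derived identity $r_{X\odot Y}=(\id\odot r)\circ a$, the cancellation $\ell\circ\ell^{-1}=\id$, and the triangle axiom $(\id\odot\ell)\circ a=r\odot\id$, with associators written explicitly, and it dismisses the $\ell$ case as similar. You also check explicitly that $U_R$ is dualizable and that $r,\ell$ are invertible, which the paper leaves implicit.
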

In particular, the unitors in $\B$ are unitors in $\Lambda\B$ satisfying the triangle axiom.

\begin{proof}
The commutative diagram \cref{existremunitor} shows the map $r$ is a 2-cell in $\Lambda \B$,
$\ltwo{r}: (N, \gamma)\boxdot (U_R, r\ell^{-1}) \Rightarrow (N, \gamma).$  The case of $\ell$ is similar.  
\end{proof}
\begin{figure}
\[\begin{tikzcd}
	{P\odot (N\odot U_R)} && {P\odot N} \\
	{ (P\odot N)\odot U_R} && {P\odot N} \\
	{ (N\odot Q)\odot U_R} && {N\odot Q} \\
	{ N\odot (Q\odot U_R)} && {N\odot Q} \\
	{ N\odot (U_R\odot Q)} && {N\odot Q} \\
	{ (N\odot U_R)\odot Q} && {N\odot Q}
	\arrow["{\id\odot r}", Rightarrow, from=1-1, to=1-3]
	\arrow["{\text{Naturality}}"{description}, draw=none, from=1-1, to=2-3]
	\arrow[equals, from=1-3, to=2-3]
	\arrow["a", Rightarrow, from=2-1, to=1-1]
	\arrow["r", Rightarrow, from=2-1, to=2-3]
	\arrow["{\gamma\odot\id}"', Rightarrow, from=2-1, to=3-1]
	\arrow["\gamma", Rightarrow, from=2-3, to=3-3]
	\arrow["{\text{Nat. of Unitor}}"{description}, draw=none, from=3-1, to=2-3]
	\arrow["r", Rightarrow, from=3-1, to=3-3]
	\arrow["a"', Rightarrow, from=3-1, to=4-1]
	\arrow[equals, from=3-3, to=4-3]
	\arrow["{\text{Triangle Axiom}}"{description}, draw=none, from=4-1, to=3-3]
	\arrow["{\id\odot r}", Rightarrow, from=4-1, to=4-3]
	\arrow["{\id\odot r\ell^{-1}}"', Rightarrow, from=4-1, to=5-1]
	\arrow[equals, from=4-3, to=5-3]
	\arrow["{=}"{description}, no body, from=5-1, to=4-3]
	\arrow["{\id\odot\ell}", Rightarrow, from=5-1, to=5-3]
	\arrow["{\text{Triangle Axiom}}"{description}, draw=none, from=5-3, to=6-1]
	\arrow[equals, from=5-3, to=6-3]
	\arrow["a", Rightarrow, from=6-1, to=5-1]
	\arrow["{r\odot\id}", Rightarrow, from=6-1, to=6-3]
\end{tikzcd}\]
\caption{Existence of unitor for $\Lambda \B$}\label{existremunitor}
\end{figure}

 \cref{LambdaBPQ_Cat,LambdaB_assoc,LambdaB_unit} complete the proof of \cref{Lambda_bicat} that $\Lambda\B$ is a bicategory. 

\section{Functors induced on twisted endomorphism bicategories}\label{sec:remnant_functors} 
In this section we verify that the construction of \cref{sec:remnant} extends to functors. 

\begin{lem}\label{inducted_functor_lem_1} The 2-cell $\phi \colon F(M)\odot F(N)\to F(M\odot N)$ in $\B$ is a 2-cell
\[\Lambda F(M,\beta)\boxdot \Lambda F(N,\gamma)\to \Lambda F((M,\beta)\boxdot (N,\gamma))\]
 in $\Lambda \B$.

\end{lem}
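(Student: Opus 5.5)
We must show that the square
\[\xymatrix{F(P)\odot F(M)\odot F(N) \ar@{=>}[r]^-{\Lambda F(M,\beta)\boxdot\Lambda F(N,\gamma)} \ar@{=>}[d]_{\id\odot\phi} & F(M)\odot F(N)\odot F(T)\ar@{=>}[d]^{\phi\odot \id}\\
F(P)\odot F(M\odot N)\ar@{=>}[r]_-{\Lambda F(\boxmaps{\beta}{\gamma})} & F(M\odot N)\odot F(T)}\]
commutes, with associators suppressed as justified by \cite{bicat_coherence,MP:coherence}. Here $(M,\beta)\colon P\to Q$ and $(N,\gamma)\colon Q\to T$. We must also check that $\phi$ is an isomorphism and that $F(M)\odot F(N)$ is dualizable, so that both sides are 1-cells of $\Lambda\mathcal{C}$. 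Both facts are immediate: $F$ is strong, so $\phi$ is invertible, and a strong 2-functor preserves dualizable 1-cells.

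First unfold both paths using \cref{lambda_of_a_composite}. The top map is
\[(\id_{F(M)}\odot \phi^{-1}F(\gamma)\phi)\circ(\phi^{-1}F(\beta)\phi\odot \id_{F(N)}).\]
The bottom map is $\phi^{-1}\circ F(\beta\odot\id_N)$ followed by $F(\id_M\odot\gamma)\circ\phi$, where each $\phi$ is the comparison for the relevant triple composite.

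Next, compose the top path with $\phi\odot\id$ on the right. The plan is to push every $\phi$ to the outside using two inputs:
\begin{enumerate}
\item naturality of $\phi$, which gives $\phi\circ(F(\beta)\odot \id_{F(N)}) = F(\beta\odot\id_N)\circ\phi$, and similarly for $\id\odot F(\gamma)$;
\item the associativity axiom for $\phi$ of a strong 2-functor. For example, the composite $F(P)\odot F(M)\odot F(N)\to F(P\odot M)\odot F(N)\to F(P\odot M\odot N)$ equals $F(P)\odot F(M)\odot F(N)\to F(P)\odot F(M\odot N)\to F(P\odot M\odot N)$, modulo associators.
\end{enumerate}

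Concretely, I would draw a large diagram whose outer boundary is the square above and whose middle row runs through
\[F(P\odot M\odot N)\to F(M\odot Q\odot N)\to F(M\odot N\odot T)\]
via $F(\beta\odot \id)$ and $F(\id\odot\gamma)$. The cells of this diagram would be of three kinds:
\begin{enumerate}
\item cells where $\phi\phi^{-1}$ cancels, in particular at the intermediate object $F(M)\odot F(Q)\odot F(N)$;
\item naturality squares for $\phi$;
\item associativity hexagons for $\phi$, with associators suppressed.
\end{enumerate}
Composing the top route with the two-fold comparison gives the middle row, and so does the bottom route. Functoriality $F(\id_M\odot\gamma)\circ F(\beta\odot\id_N)=F(\boxmaps{\beta}{\gamma})$ then identifies the middle row with $F$ of the composite.

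The main obstacle is bookkeeping at $F(M)\odot F(Q)\odot F(N)$. There, $\Lambda F(M,\beta)\odot \id$ lands via $\phi^{-1}\odot\id$, while $\id\odot\Lambda F(N,\gamma)$ starts with $\id\odot\phi$. To pass from these to the single comparison into $F(M\odot Q\odot N)$, I need both bracketings of the associativity axiom for $\phi$, together with the associators of $\mathcal{B}$ and $\mathcal{C}$. I would handle this by working with the canonical comparison $\phi^{(3)}\colon F(A)\odot F(B)\odot F(C)\to F(A\odot B\odot C)$. Coherence for strong 2-functors makes $\phi^{(3)}$ independent of bracketing, which reduces every cell to naturality of $\phi^{(3)}$.
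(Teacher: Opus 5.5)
Your proposal is correct and is essentially the paper's proof: the paper's \cref{Lambda_F_boxdot} is exactly the diagram you describe. Its middle column is $F(P\odot M\odot N)\Rightarrow F(M\odot Q\odot N)\Rightarrow F(M\odot N\odot T)$, its cells come from naturality of $\phi$ and the associativity axiom for $\phi$, and this includes the cell at $F(M)\odot F(Q)\odot F(N)$ where $(\phi\odot\id)^{-1}$ is followed by $\id\odot\phi$. One small slip: in composition order the bottom map is $\phi^{-1}\circ F(\id_M\odot\gamma)\circ F(\beta\odot\id_N)\circ\phi$, not as you wrote it.
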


\begin{proof}

\begin{figure}
\[\begin{tikzcd}[column sep= .0in, row sep =. 35in]
	{F(P)\odot F(M)\odot F(N)} &&& {F(P)\odot F(M\odot N)} \\
	& {F(P\odot M)\odot F(N)} & {F(P\odot M\odot N)} \\
	& {F(M\odot Q)\odot F(N)} \\
	& {F(M)\odot F(Q)\odot F(N)} & {F(M\odot Q\odot N)} \\
	& {F(M)\odot F(Q\odot N)} \\
	& {F(M)\odot F(N\odot T)} & {F(M\odot N\odot T)} \\
	{F(M)\odot F(N)\odot F(T)} &&& {F(M\odot N)\odot F(T)}
	\arrow["{\id \odot \phi}", Rightarrow,  from=1-1, to=1-4]
	\arrow["{\phi\odot \mathrm{id}}"', Rightarrow,  from=1-1, to=2-2]
	\arrow["{\Lambda F(M,\beta)\boxdot \Lambda F(N,\gamma)}"', Rightarrow,  dotted, from=1-1, to=7-1]
	\arrow["\phi", Rightarrow,  from=1-4, to=2-3]
	\arrow["{\Lambda F((M,\beta)\boxdot(N,\gamma))}", Rightarrow,  dotted, from=1-4, to=7-4]
	\arrow["\phi", Rightarrow,  from=2-2, to=2-3]
	\arrow["{F(\beta)\odot \mathrm{id}}", Rightarrow,  from=2-2, to=3-2]
	\arrow["{F(\beta\odot \mathrm{id})}", Rightarrow,  from=2-3, to=4-3]
	\arrow["\phi", Rightarrow,  from=3-2, to=4-3]
	\arrow["{\phi\odot \mathrm{id}}"', Rightarrow,  from=4-2, to=3-2]
	\arrow["{\mathrm{id}\odot \phi}", Rightarrow,  from=4-2, to=5-2]
	\arrow["{F(\mathrm{id}\odot \gamma)}", Rightarrow,  from=4-3, to=6-3]
	\arrow["\phi"', Rightarrow,  from=5-2, to=4-3]
	\arrow["{\mathrm{id}\odot F(\gamma)}", Rightarrow,  from=5-2, to=6-2]
	\arrow["\phi"', Rightarrow,  from=6-2, to=6-3]
	\arrow["{\mathrm{id}\odot \phi}", Rightarrow,  from=7-1, to=6-2]
	\arrow["{\phi\odot \id}"', Rightarrow,  from=7-1, to=7-4]
	\arrow["\phi"', Rightarrow,  from=7-4, to=6-3]
\end{tikzcd}\]
\caption{Compatibility of $\Lambda F$ and $\boxdot$}\label{Lambda_F_boxdot}
\end{figure}
We show that $\phi \colon F(M)\odot F(N)\to F(M\odot N)$ defines a 2-cell in $\Lambda \B$ by showing that the diagram in \eqref{boxdot_Lambda} commutes.
\begin{equation}\label{boxdot_Lambda}
% https://q.uiver.app/#q=WzAsNCxbMCwxLCJGKFApXFxvZG90IEYoTVxcb2RvdCBOKSJdLFszLDEsIkYoTVxcb2RvdCBOKVxcb2RvdCBGKFQpIl0sWzAsMCwiRihQKVxcb2RvdCAoRihNKVxcb2RvdCBGKE4pKSJdLFszLDAsIihGKE0pXFxvZG90IEYoTikpXFxvZG90IEYoVCkiXSxbMCwxLCJcXExhbWJkYSBGKChNLFxcYmV0YSlcXGJveGRvdCAoTixcXGdhbW1hKSkiLDAseyJsZXZlbCI6Miwic3R5bGUiOnsiYm9keSI6eyJuYW1lIjoiZG90dGVkIn19fV0sWzIsMywiXFxMYW1iZGEgRihNLFxcYmV0YSlcXGJveGRvdCBcXExhbWJkYSBGKE4sXFxnYW1tYSkiLDAseyJsZXZlbCI6Miwic3R5bGUiOnsiYm9keSI6eyJuYW1lIjoiZG90dGVkIn19fV0sWzIsMCwiXFxpZFxcb2RvdFxccGhpIiwyLHsibGV2ZWwiOjJ9XSxbMywxLCJcXHBoaVxcb2RvdFxcaWQiLDAseyJsZXZlbCI6Mn1dXQ==&macro_url=https%3A%2F%2Fwww.overleaf.com%2Fproject%2F64c9c0fcea71c453e68a663b
\begin{tikzcd}
	{F(P)\odot (F(M)\odot F(N))} &&& {(F(M)\odot F(N))\odot F(T)} \\
	{F(P)\odot F(M\odot N)} &&& {F(M\odot N)\odot F(T)}
	\arrow["{\Lambda F(M,\beta)\boxdot \Lambda F(N,\gamma)}", Rightarrow, dotted, from=1-1, to=1-4]
	\arrow["{\id\odot\phi}"', Rightarrow, from=1-1, to=2-1]
	\arrow["{\phi\odot\id}", Rightarrow, from=1-4, to=2-4]
	\arrow["{\Lambda F((M,\beta)\boxdot (N,\gamma))}", Rightarrow, dotted, from=2-1, to=2-4]
\end{tikzcd}
\end{equation}
See \cref{Lambda_F_boxdot}.
\end{proof}

\begin{lem}\label{inducted_functor_lem_2}
 For a 0-cell $V$ in $\Lambda \B$, the 2-cell $\iota\colon U_{F(V)}\to F(U_V)$ is a 2-cell $(U_{F(V)},r\ell^{-1})\to \Lambda F(U_V,r\ell^{-1})$.
\end{lem}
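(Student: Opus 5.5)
We need a commutative square: for the 0-cell $P\colon V\to V$ of $\Lambda\B$, the 1-cell $(U_{F(V)},r\ell^{-1})$ goes from $F(P)$ to $F(P)$ with structure 2-cell
\[F(P)\odot U_{F(V)}\xRightarrow{r} F(P)\xRightarrow{\ell^{-1}} U_{F(V)}\odot F(P).\]
By \cref{lambda_of_a_composite}, the 1-cell $\Lambda F(U_V,r\ell^{-1})$ has underlying 1-cell $F(U_V)$ and structure 2-cell
\[F(P)\odot F(U_V)\xRightarrow{\phi} F(P\odot U_V)\xRightarrow{F(r)} F(P)\xRightarrow{F(\ell^{-1})} F(U_V\odot P)\xRightarrow{\phi^{-1}} F(U_V)\odot F(P).\]
The goal is to show that
\[(\iota\odot\id)\circ \ell^{-1}\circ r \;=\; \phi^{-1}\circ F(\ell^{-1})\circ F(r)\circ\phi\circ(\id\odot\iota)\]
as 2-cells $F(P)\odot U_{F(V)}\Rightarrow F(U_V)\odot F(P)$. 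Since $F$ is strong, $\iota$ is invertible, so $\iota$ is an isomorphism 2-cell, as \cref{Lambda_bicat} requires.

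I would split the target equation at the middle object $F(P)$ and check two pieces.

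The first piece is $F(r)\circ\phi\circ(\id\odot\iota)=r$ as maps $F(P)\odot U_{F(V)}\Rightarrow F(P)$. This is exactly the right unit axiom for the strong 2-functor $F$, which relates the lax constraints $\phi$ and $\iota$ to the unitors.

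The second piece is $\phi^{-1}\circ F(\ell^{-1})=(\iota\odot\id)\circ\ell^{-1}$ as maps $F(P)\Rightarrow F(U_V)\odot F(P)$. Equivalently, $F(\ell)\circ\phi\circ(\iota\odot\id)=\ell$, which is the left unit axiom for $F$. Inverting both sides, using that $\phi$, $\iota$ and $\ell$ are all invertible, gives the form needed.

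Stacking the two pieces gives the required square. I would present this as a single diagram in the style of \cref{Lambda_F_boxdot}: the outer boundary is the square, and the interior has two cells, one labelled by each unit axiom of $F$.

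The only real care needed is bookkeeping. The two unit axioms must be used in the correct directions, one of them inverted. The suppressed associators, which here are trivial because only unit 1-cells appear, can be ignored by \cite{bicat_coherence,MP:coherence}. I do not expect a genuine obstacle beyond this.
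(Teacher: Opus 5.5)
Your proposal is correct and is essentially the paper's proof. The diagram in \cref{lambda_F_unit} is exactly your square, cut at the middle $F(P)$ by an identity, and its two halves commute by the right and left unit axioms of the strong functor $F$ (the left one in the inverted form you describe). Your remark that $\iota$ is invertible, which is what makes it an admissible 2-cell of $\Lambda\mathcal{C}$, is a point the paper leaves implicit.
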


\begin{proof}
The image of $(U_V,r\ell^{-1})$ under $\Lambda F$ is the pair 
\[(F(U_V), F(P)\odot F(U_V)\Rightarrow F(P\odot U_V)
\Rightarrow F(P) \Rightarrow F(U_V\odot P)\Rightarrow F(U_V)\odot F(P)).\]
Since the diagram in \cref{lambda_F_unit} commutes, 
$\iota$ defines a map in $\Lambda \cc$.
\begin{figure}
\centering
% https://q.uiver.app/#q=WzAsOCxbMSwwLCJGKFApXFxvZG90IEYoVV9WKSJdLFsxLDEsIkYoUFxcb2RvdCBVX1YpIl0sWzEsMiwiRihQKSJdLFsxLDMsIkYoVV9WXFxvZG90IFApIl0sWzEsNCwiRihVX1YpXFxvZG90IEYoUCkiXSxbMCwwLCJGKFApXFxvZG90IFVfe0YoVil9Il0sWzAsNCwiVV97RihWKX1cXG9kb3QgRihQKSJdLFswLDIsIkYoUCkiXSxbNywyLCIiLDIseyJsZXZlbCI6Miwic3R5bGUiOnsiaGVhZCI6eyJuYW1lIjoibm9uZSJ9fX1dLFs1LDcsInIiLDIseyJsZXZlbCI6Mn1dLFs3LDYsIlxcZWxsXnstMX0iLDIseyJsZXZlbCI6Mn1dLFs1LDAsIlxcbWF0aHJte2lkfVxcb2RvdCBcXGlvdGEiLDIseyJsZXZlbCI6Mn1dLFswLDEsIlxccGhpIiwwLHsibGV2ZWwiOjJ9XSxbMSwyLCJGKHIpIiwwLHsibGV2ZWwiOjJ9XSxbMiwzLCJGKFxcZWxsXnstMX0pIiwwLHsibGV2ZWwiOjJ9XSxbMyw0LCJcXHBoaV57LTF9IiwwLHsibGV2ZWwiOjJ9XSxbNiw0LCJcXGlvdGFcXG9kb3QgXFxtYXRocm17aWR9IiwyLHsibGV2ZWwiOjJ9XV0=
\begin{tikzcd}
	{F(P)\odot U_{F(V)}} & {F(P)\odot F(U_V)} \\
	& {F(P\odot U_V)} \\
	{F(P)} & {F(P)} \\
	& {F(U_V\odot P)} \\
	{U_{F(V)}\odot F(P)} & {F(U_V)\odot F(P)}
	\arrow[Rightarrow, no head, from=3-1, to=3-2]
	\arrow["r"', Rightarrow, from=1-1, to=3-1]
	\arrow["{\ell^{-1}}"', Rightarrow, from=3-1, to=5-1]
	\arrow["{\mathrm{id}\odot \iota}"', Rightarrow, from=1-1, to=1-2]
	\arrow["\phi", Rightarrow, from=1-2, to=2-2]
	\arrow["{F(r)}", Rightarrow, from=2-2, to=3-2]
	\arrow["{F(\ell^{-1})}", Rightarrow, from=3-2, to=4-2]
	\arrow["{\phi^{-1}}", Rightarrow, from=4-2, to=5-2]
	\arrow["{\iota\odot \mathrm{id}}"', Rightarrow, from=5-1, to=5-2]
\end{tikzcd}
\caption{$\Lambda F$ respects the unit}\label{lambda_F_unit}
\end{figure}
\end{proof}

\lmabdaofacomposite*

\begin{proof}
   If $F$ is a strong functor, $F(M)$ is dualizable.   Then the pair 
    \begin{equation}\label{1_cell_Lambda_F}(F(M), \phi\circ F(\psi)\circ \phi^{-1}).
    \end{equation} 
 is a 1-cell in $\Lambda \mathcal{C}$. 
By \cref{inducted_functor_lem_1,inducted_functor_lem_2},
the coherence diagrams follow from the coherence diagrams for $F$.

    The compatibility with composition is straightforward to check. 
    On 0- and 2-cells, $\Lambda F=F$ and there is nothing to prove.  For 1-cells, 
    \begin{align*}
        (\Lambda (F\circ G))(M,\beta)
        &=((F\circ G)(M), \phi_{F\circ G} \circ ((F\circ G)(\beta)) \circ \phi_{F\circ G}^{-1})\\
        &=((F\circ G)(M), \phi_{F}\circ F(\phi_{G})\circ ((F\circ G)(\beta)) \circ F(\phi_{G}^{-1})\circ \phi_F^{-1})\\
        &=(\Lambda F)(G(M), \phi_{G}\circ G(\beta) \circ \phi_{G}^{-1})
        \\
        &=(\Lambda F)(\Lambda G)(M,\beta)
    \end{align*}
\end{proof}

\section{Traces on twisted endomorphism bicategories and characters}\label{sec:traces}
In this section we show  the bicategorical trace can be upgraded to a functor on the endomorphism bicategory of \cref{sec:remnant} and use this functor to define characters for 2-representations.
\thmcharacter*

To prove this theorem we first note the following consequence of  \cite[Proposition 7.5]{Ponto_2012}.

\begin{lem} \label{lem:comp}
    For the functor  $\rchi$ defined above and composable 1-cells $(M, \beta)$ and $(N, \gamma)$ in $\Lambda \B$, 
    \[\rchi[(N, \gamma)\boxdot (M, \beta)] = \rchi[(N, \gamma)]\circ \rchi[(M, \beta)].\] 
\end{lem}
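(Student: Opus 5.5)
The plan is to reduce the statement to the composition property of bicategorical traces, \cite[Proposition 7.5]{Ponto_2012}, which says that for dualizable 1-cells $M$ and $N$ and 2-cells $f\colon Q\odot M\Rightarrow M\odot P$ and $g\colon P\odot N\Rightarrow N\odot K$, the trace of the composite
\[Q\odot M\odot N\xRightarrow{f\odot \id} M\odot P\odot N\xRightarrow{\id\odot g} M\odot N\odot K\]
is $\tr(g)\circ\tr(f)$. Here $M\odot N$ is dualizable with dual $N^*\odot M^*$. Its coevaluation is $\eta_M$ followed by $\id\odot\eta_N\odot\id$, and its evaluation is built dually from $\epsilon_N$ and $\epsilon_M$. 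The first step is to unwind the composite 1-cell in $\Lambda\B$: it is the pair consisting of the composite of the underlying 1-cells, with twisting 2-cell given by the $\boxmaps{\cdot}{\cdot}$ construction. By \cref{notation_box_dot}, that twisting 2-cell is precisely the displayed composite above (first the twist of the first factor, then the twist of the second). Since $\rchi$ sends a 1-cell to the bicategorical trace of its twisting 2-cell, the left side of the claimed equality is the trace of this $\boxmaps{}{}$ composite. The right side is the composite of the two individual traces in $\mathbf{T}$, read in the order dictated by the source and target shadows. Matching these is exactly the content of \cite[7.5]{Ponto_2012}.

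The remaining work is bookkeeping, and I would organize it as follows. First, I will check that the duality data for the composite 1-cell used in the cited proposition agrees with whatever duality data is implicitly fixed for composites in $\Lambda\B$. The trace is independent of the choice of dual up to the canonical isomorphism, and the bicategorical trace depends only on the 2-cell, not on the chosen dual, so this is harmless. Second, I will note that the associativity and unit isomorphisms suppressed in $\boxmaps{}{}$ are the same ones suppressed in \cite{Ponto_2012}; this is justified by the coherence results \cite{bicat_coherence,MP:coherence} already invoked in the paper.

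If one wanted a self-contained argument instead of citing, the main obstacle would be the large diagram chase. In that chase, the shadow isomorphism $\theta$ for $\sh{M\odot P\odot N\odot N^*\odot M^*}$ has to be split into two cyclic rotations using the first axiom of \cref{defn:shadow}. The middle $\epsilon_M\odot\eta_N$ pieces must then be moved past each other by naturality of $\theta$ and interchange. I would handle this by drawing the trace of $\boxmaps{}{}$ as a string of maps, inserting $\sh{\epsilon_M\odot \id}\circ\sh{\id\odot\eta_N}$-type intermediate stages, and verifying that each square commutes either by naturality of $\theta$ or by the triangle-shaped shadow coherence diagram. Since this is exactly the computation carried out in \cite{Ponto_2012}, I expect to cite it rather than redo it.
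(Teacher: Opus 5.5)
Your proposal is correct and takes the same route as the paper, which gives no separate argument and records the lemma as an immediate consequence of \cite[Proposition 7.5]{Ponto_2012}, once the twisting 2-cell of the $\boxdot$-composite is identified as $\boxmaps{\gamma}{\beta}$. Your added bookkeeping is the only work implicit in that citation: independence of the trace from the chosen dual pair (so $\chi$ is well defined on composites), and the coherence results \cite{MP:coherence} for the suppressed associators and unitors.
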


We now verify that $\chi$ preserves unit 1-cells.

 \begin{lem} \label{lem:unit}
     For 
     the unit 1-cell $(U_V, r\ell^{-1}): P\to P$ in $\Lambda \B$, 
     \[\rchi[(U_V, r\ell^{-1})] = \id_{\sh{P}}.\]
 \end{lem}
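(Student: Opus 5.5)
We unwind \cref{defn:bicat_trace} for the 1-cell $(U_V, r\ell^{-1})$ and collapse the resulting composite with the unit coherence of the shadow. The first step is to fix the dual data for $U_V$. We take $U_V^\ast = U_V$, with coevaluation $\eta\colon U_V \Rightarrow U_V\odot U_V$ given by $\ell^{-1}$ (equivalently $r^{-1}$, since $\ell_{U_V}=r_{U_V}$), and evaluation $\epsilon\colon U_V\odot U_V\Rightarrow U_V$ given by $\ell$. The triangle identities then reduce to the unit coherence of $\B$. Traces do not depend on the choice of dual up to the canonical comparison \cite{Ponto_2012}, so this choice is harmless.

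With these choices, $\tr(r\ell^{-1})$ is the composite
\[\sh{P}\xrightarrow{\sh{\id\odot \ell^{-1}}}\sh{P\odot U_V\odot U_V}\xrightarrow{\sh{r\ell^{-1}\odot \id}}\sh{U_V\odot P\odot U_V}\xrightarrow{\theta}\sh{U_V\odot U_V\odot P}\xrightarrow{\sh{\ell\odot\id}}\sh{P}.\]
Using naturality of the unitors and the coherence results \cite{bicat_coherence,MP:coherence}, the maps inside the shadow combine. The first two maps become $\sh{P}\to\sh{U_V\odot P}$, namely $\sh{\ell^{-1}}$, followed by $\sh{\id\odot r^{-1}}$ into $\sh{U_V\odot P\odot U_V}$. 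Naturality of $\theta$ with respect to $r$ then lets us rewrite $\theta$ on $\sh{U_V\odot (P\odot U_V)}$ in terms of $\theta\colon\sh{U_V\odot P}\to\sh{P\odot U_V}$, after which the last map becomes $\sh{r}$. After these simplifications the composite is
\[\sh{P}\xrightarrow{\sh{\ell^{-1}}}\sh{U_V\odot P}\xrightarrow{\theta}\sh{P\odot U_V}\xrightarrow{\sh{r}}\sh{P}.\]

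The second diagram in \cref{defn:shadow} states exactly that $\sh{r}\circ\theta = \sh{\ell}$ on $\sh{U_V\odot P}$. The composite above is therefore $\sh{\ell}\circ\sh{\ell^{-1}}=\id_{\sh{P}}$, which proves the lemma.

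The main obstacle is the middle step: moving $\theta$ past the unitors. The $\theta$ occurring in the trace is the cyclic isomorphism for the decomposition $(U_V)\odot(P\odot U_V)$, or $(U_V\odot P)\odot U_V$ depending on bracketing, so we have to justify replacing it by the $\theta$ for $U_V$ and $P$. Our first attempt would be naturality of $\theta$ in its second variable applied to $r\colon P\odot U_V\Rightarrow P$, combined with the triangle axiom. If that does not close, we would use the first (cyclic) diagram in \cref{defn:shadow} to split $\theta$ into two rotations, one of which involves only unit 1-cells and is handled by the unit axiom.
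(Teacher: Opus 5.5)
Your proposal is correct and is essentially the paper's proof: the same self-dual choice $\eta=\ell^{-1}$, $\epsilon=\ell$ for $U_V$, a coherence-and-naturality collapse of the trace composite, and the unit axiom for shadows to finish. The paper lands on $\sh{\ell}\circ\theta\circ\sh{r^{-1}}$, the mirror image of your $\sh{r}\circ\theta\circ\sh{\ell^{-1}}$. On the step you flag, there is no bracketing ambiguity: the $\theta$ in the trace is the one moving the trailing $U_V$ of $\sh{(U_V\odot P)\odot U_V}$ to the front, not the one on $\sh{U_V\odot(P\odot U_V)}$ that your naturality-with-$r$ sentence refers to. The paper handles it by naturality of this $\theta$ in its first variable with respect to $\ell\colon U_V\odot P\Rightarrow P$, which turns it into $\theta\colon\sh{P\odot U_V}\to\sh{U_V\odot P}$ with no appeal to the cyclic axiom.
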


 \begin{proof}
Recall that $(U_R, U_R)$ forms a dual pair, with coevaluation 
$\eta = \ell^{-1}=r^{-1}$ and evaluation 
$\epsilon = \ell=r$. 

In the diagram in \cref{fig:lambda_functor_unit}, $\rchi[(U_V, r\ell^{-1})]$ is the  top, right, and bottom composite.  
\begin{figure}
\begin{tikzcd}
	{\sh{P}} & {\sh{P\odot U_R}} && {\sh{P\odot(U_R\odot U_R)}} \\
	&&& {\sh{(P\odot U_R)\odot U_R}} \\
	& {\sh{P\odot U_R}} && {\sh{P\odot U_R}} \\
	& {\sh{P\odot U_R}} && {\sh{(U_R\odot P)\odot U_R}} \\
	& {\sh{U_R\odot P}} && {\sh{U_R\odot (U_R\odot P)}} \\
	{\sh{P}} & {\sh{U_R\odot P}} && {\sh{(U_R\odot U_R)\odot P}}
	\arrow["{\sh{r^{-1}}}", Rightarrow, from=1-1, to=1-2]
	\arrow["\id", equals, from=1-1, to=6-1]
	\arrow["{\sh{\id\odot \eta}}", curve={height=-24pt}, Rightarrow, from=1-2, to=1-4]
	\arrow["{\sh{\id\odot \ell^{-1}}}"', Rightarrow, from=1-2, to=1-4]
	\arrow["\id"', equals, from=1-2, to=3-2]
	\arrow["{\sh{a^{-1}}}"', Rightarrow, from=2-4, to=1-4]
	\arrow["{\sh{r\odot\id}}", Rightarrow, from=2-4, to=3-4]
	\arrow["\id"', equals, from=3-2, to=3-4]
	\arrow["\id", equals, from=3-2, to=4-2]
	\arrow["{\sh{\ell^{-1}\odot\id}}", Rightarrow, from=3-4, to=4-4]
	\arrow["\theta"', Rightarrow, from=4-2, to=5-2]
	\arrow["{\sh{\ell\odot \id}}"', Rightarrow, from=4-4, to=4-2]
	\arrow["\theta", Rightarrow, from=4-4, to=5-4]
	\arrow["\id"', equals, from=5-2, to=6-2]
	\arrow["{\sh{\id\odot \ell}}"', Rightarrow, from=5-4, to=5-2]
	\arrow["{\sh{\ell}}"', Rightarrow, from=6-2, to=6-1]
	\arrow["{\sh{a^{-1}}}"', Rightarrow, from=6-4, to=5-4]
	\arrow["{\sh{\epsilon\odot\id}}", curve={height=-24pt}, Rightarrow, from=6-4, to=6-2]
	\arrow["{\sh{r\odot\id}}"', Rightarrow, from=6-4, to=6-2]
\end{tikzcd}
\caption{ $\rchi[(U_V, r\ell^{-1})]$ }\label{fig:lambda_functor_unit}
\end{figure}
The regions of the diagram commute by coherence for bicategories and shadows \cite{MP:coherence}.
 \end{proof}

To consider the behavior on 2-cells we need a preliminary lemma. 
\begin{lem} \label{lem:treq}
    Let $\B$ be a bicategory with shadow functors $\sh{\text{-}}$. 
    Given a dualizable 1-cell $M$, isomorphism 2-cell $\phi\colon M\Rightarrow N$, and commuting diagram 
    % https://q.uiver.app/#q=WzAsNCxbMCwwLCJQXFxvZG90IE0iXSxbMSwwLCJNXFxvZG90IFEiXSxbMCwxLCJQXFxvZG90IE4iXSxbMSwxLCJOXFxvZG90IFEiXSxbMCwyLCJcXGlkIFxcb2RvdCBcXHBoaSIsMix7ImxldmVsIjoyfV0sWzAsMSwiXFxiZXRhIiwwLHsibGV2ZWwiOjJ9XSxbMiwzLCJcXGdhbW1hIiwwLHsibGV2ZWwiOjJ9XSxbMSwzLCJcXHBoaVxcb2RvdCBcXGlkIiwwLHsibGV2ZWwiOjJ9XV0=&macro_url=https%3A%2F%2Fwww.overleaf.com%2Fproject%2F64c9c0fcea71c453e68a663b
\[\begin{tikzcd}
	{P\odot M} & {M\odot Q} \\
	{P\odot N} & {N\odot Q}
	\arrow["\beta", Rightarrow, from=1-1, to=1-2]
	\arrow["{\id \odot \phi}"', Rightarrow, from=1-1, to=2-1]
	\arrow["{\phi\odot \id}", Rightarrow, from=1-2, to=2-2]
	\arrow["\gamma", Rightarrow, from=2-1, to=2-2]
\end{tikzcd}\]
     then 
   $\tr(\beta) = \tr(\gamma).$
\end{lem}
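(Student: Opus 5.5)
First we transport the duality data along $\phi$, so that $N$ is dualizable with dual $M^\ast$. Define
\[
\eta_N\colon U \xRightarrow{\eta} M\odot M^\ast \xRightarrow{\phi\odot\id} N\odot M^\ast,
\qquad
\epsilon_N\colon M^\ast\odot N \xRightarrow{\id\odot\phi^{-1}} M^\ast\odot M \xRightarrow{\epsilon} U .
\]
The triangle identities for $(N,M^\ast,\eta_N,\epsilon_N)$ follow from those for $M$, because $\phi$ is invertible and the interchange law lets $\phi$ and $\phi^{-1}$ cancel. The trace in \cref{defn:bicat_trace} is independent of the chosen dual pair up to the canonical comparison isomorphism of duals, as in \cite{Ponto_2012}. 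We may therefore compute $\tr(\gamma)$ using this transported dual pair.

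Next, write out $\tr(\gamma)$:
\[
\sh{P}\xto{\sh{\id\odot\eta_N}}\sh{P\odot N\odot M^\ast}\xto{\sh{\gamma\odot\id}}\sh{N\odot Q\odot M^\ast}\xto{\theta}\sh{M^\ast\odot N\odot Q}\xto{\sh{\epsilon_N\odot\id}}\sh{Q}.
\]
Substituting the definitions of $\eta_N$ and $\epsilon_N$ produces the string
\[
\id\odot\eta,\quad \id\odot\phi\odot\id,\quad \gamma\odot\id,\quad \theta,\quad \id\odot\phi^{-1}\odot\id,\quad \epsilon\odot\id .
\]
By the commuting square in the hypothesis, $\gamma\circ(\id\odot\phi)=(\phi\odot\id)\circ\beta$. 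The composite therefore becomes
\[
\id\odot\eta,\quad \beta\odot\id,\quad \phi\odot\id\odot\id,\quad \theta,\quad \id\odot\phi^{-1}\odot\id,\quad \epsilon\odot\id .
\]
Naturality of $\theta$ in both variables moves $\phi\odot\id\odot\id$ past $\theta$, where it becomes $\id\odot\phi\odot\id$ acting on the $N$ factor of $\sh{M^\ast\odot N\odot Q}$. It then cancels against $\id\odot\phi^{-1}\odot\id$. What remains is exactly the composite defining $\tr(\beta)$.

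The main obstacle is the first step: making precise that the trace does not depend on the choice of dual, or equivalently justifying the use of the transported pair. I would handle it by citing the standard uniqueness of duals up to unique isomorphism compatible with $\eta$ and $\epsilon$, together with the corresponding invariance of the bicategorical trace in \cite{Ponto_2012}. If that invariance is not quoted there, I would prove it directly. Given two dual pairs for $N$, the comparison isomorphism $\alpha\colon N^\ast\to N'^\ast$ is built from $\eta$ and $\epsilon'$. Inserting $\alpha$ and $\alpha^{-1}$ and applying naturality of $\theta$ once more shows the two traces agree. The rest is routine interchange-law bookkeeping, with coherence isomorphisms suppressed as in the remark following \cref{defn:shadow}.
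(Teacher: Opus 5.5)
Your proposal is correct and matches the paper's proof. The paper likewise transports $\eta_M,\epsilon_M$ along $\phi$ to make $N$ dualizable with dual $M^\ast$, invokes independence of the trace from the choice of dual (citing \cite[4.5.2]{p:thesis}), and checks a ladder diagram whose squares commute by the definitions of the transported coevaluation and evaluation, the hypothesis square, and naturality of $\theta$ — exactly your cancellation of $\phi$ against $\phi^{-1}$, where only naturality of $\theta$ in the first variable is actually needed.
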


\begin{proof}
       Let  $\eta_M$ and $\epsilon_M$ be a choice of coevaluation and evaluation for $M$ with dual $M^\ast$.
    Since $N$ is isomorphic to $M$ and $M$ is dualizable, $N$ is dualizable and a choice of coevaluation and evaluation for $N$ are 
    \[\xymatrix{\eta: U_R \ar@{=>}[r]^-{\eta_M} & M\odot M^\ast \ar@{=>}[r]^{\phi\odot\id} & N\odot M^\ast}\] and
    \[\xymatrix{\epsilon: M^\ast \odot N \ar@{=>}[r]^-{\id\odot\phi^{-1}} & M^\ast\odot M \ar@{=>}[r]^-{\epsilon_M} & U_S}.\]

    Since traces are independent of the choice of dual \cite[4.5.2]{p:thesis}, we can use this dual pair to compute the trace of $\gamma$. In the commuting diagram in \cref{comparison_of_traces}, the left composite is the trace of $\beta$ and the right composite is the trace of $\gamma$.
\begin{figure}
\centering
\begin{tikzcd}
	{\sh{P}} \\
	{\sh{P\odot U_S} } & {\sh{P\odot U_S} } \\
	{ \sh{P\odot M\odot M^\ast}} && {\sh{P\odot N\odot M^\ast}} \\
	{\sh{M\odot Q\odot M^\ast}} && {\sh{N\odot Q\odot M^\ast}} \\
	{\sh{Q\odot M^\ast\odot M} } && {\sh{Q\odot M^\ast\odot N}} \\
	{\sh{Q \odot U_S}} & {\sh{Q \odot U_S}} \\
	{\sh{Q}}
	\arrow["{\sh{r^{-1}}}"', Rightarrow, from=1-1, to=2-1]
	\arrow["{\sh{r^{-1}}}", Rightarrow, from=1-1, to=2-2]
	\arrow["{\sh{\id\odot\eta_M}}"', Rightarrow, from=2-1, to=3-1]
	\arrow[Rightarrow, no head, from=2-1, to=2-2]
	\arrow["{\sh{\beta\odot\id}}"', Rightarrow, from=3-1, to=4-1]
	\arrow["{\sh{\id\odot\phi\odot\id}}", Rightarrow, from=3-1, to=3-3]
	\arrow["\theta"', Rightarrow, from=4-1, to=5-1]
	\arrow["{\sh{\phi\odot\id\odot\id}}", Rightarrow, from=4-1, to=4-3]
	\arrow["{\sh{\id\odot\epsilon_M}}"', Rightarrow, from=5-1, to=6-1]
	\arrow["{\sh{\id\odot\id\odot\phi}}", Rightarrow, from=5-1, to=5-3]
	\arrow["{\sh{r}}"', Rightarrow, from=6-1, to=7-1]
	\arrow[Rightarrow, no head, from=6-1, to=6-2]
	\arrow["{\sh{\id\odot\eta}}", Rightarrow, dotted, from=2-2, to=3-3]
	\arrow["{\sh{\gamma\odot\id}}", Rightarrow, from=3-3, to=4-3]
	\arrow["\theta", Rightarrow, from=4-3, to=5-3]
	\arrow["{\sh{\id\odot\epsilon}}", Rightarrow, dotted, from=5-3, to=6-2]
	\arrow["{\sh{r}}", Rightarrow, from=6-2, to=7-1]
\end{tikzcd}
\caption{Comparison of traces (\cref{lem:treq})}\label{comparison_of_traces}
\end{figure}
\end{proof}

Composition of 2-cells is preserved by the map $\rchi$ follows from the limited number of 2-cells in the target category.

 \begin{lem} \label{lem:2comp}
     For the map $\rchi$ defined above and composable 2-cells $\ltwo{\phi}: (M, \beta) \Rightarrow (N, \gamma)$ and $\ltwo{\psi}: (N, \gamma) \Rightarrow (L, \sigma)$, 
     \[\rchi(\ltwo{\phi}\Box \ltwo{\psi}) = \rchi(\ltwo{\phi})\circ \rchi(\ltwo{\psi}).\]
 \end{lem}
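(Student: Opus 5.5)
The plan is to reduce the statement to the fact that $\T$ has only identity 2-cells, so that the equation becomes a statement about identity 2-cells in $\T$ and there is nothing of substance to check beyond well-definedness.

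First, record what the three 2-cells are. Composition $\ltwo{\phi}\Box\ltwo{\psi}$ in $\Lambda\B$ is the vertical composite of the underlying 2-cells of $\B$ (\cref{LambdaBPQ_Cat}), which is a 2-cell $(M,\beta)\Rightarrow(L,\sigma)$. By the definition of $\rchi$ in \cref{thm:character}:
\begin{itemize}
\item its image is $\rchi(\ltwo{\phi}\Box\ltwo{\psi})=\id_{\tr(\beta)}$;
\item for the two factors, $\rchi(\ltwo{\phi})=\id_{\tr(\beta)}$ and $\rchi(\ltwo{\psi})=\id_{\tr(\gamma)}$.
\end{itemize}

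For these to be legitimate 2-cells in $\T$ with the correct sources and targets, we need $\tr(\beta)=\tr(\gamma)=\tr(\sigma)$. This is exactly \cref{lem:treq}:
\begin{itemize}
\item applying it to $\phi\colon M\Rightarrow N$ and the commuting square defining $\ltwo{\phi}$ gives $\tr(\beta)=\tr(\gamma)$;
\item applying it to $\psi$ gives $\tr(\gamma)=\tr(\sigma)$.
\end{itemize}
The hypotheses of \cref{lem:treq} hold because 2-cells of $\Lambda\B$ are isomorphisms in $\B$ between dualizable 1-cells.

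Once the three traces agree, the composite $\rchi(\ltwo{\phi})\circ\rchi(\ltwo{\psi})$ is $\id_{\tr(\beta)}\circ\id_{\tr(\beta)}=\id_{\tr(\beta)}$. This equals $\rchi(\ltwo{\phi}\Box\ltwo{\psi})$, which proves the lemma.

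The only point needing care is this well-definedness check: that $\rchi$ really sends each 2-cell to a 2-cell in $\T$ between the right 1-cells. \Cref{lem:treq} already handles it, so I expect no real obstacle. The rest is immediate because the hom-categories of $\T$ are discrete.
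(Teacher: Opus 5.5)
Your proposal is correct and follows essentially the same route as the paper: you use \cref{lem:treq} (via the isomorphisms $M\cong N\cong L$) to get $\tr(\beta)=\tr(\gamma)=\tr(\sigma)$, and then observe that $\T$ has only identity 2-cells, so both sides of the equation are the same identity. Your explicit check that the hypotheses of \cref{lem:treq} hold (dualizable source, isomorphism 2-cell) is a small addition; the paper leaves that check implicit.
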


\begin{proof}
    $\rchi(\ltwo{\phi})$ is a 2-cell  $\tr(\beta)\Rightarrow \tr(\gamma)$. The only 2-cells in $T$ are identities. By \cref{lem:treq} and $M\cong N\cong L$, $\tr(\beta)=\tr(\gamma)=\tr(\sigma)$, which means $\id_{\tr(\beta)} = \id_{\tr(\gamma)} = \id_{\tr(\sigma)}$. 
\end{proof}

\noindent
\begin{proof}[Proof of \cref{thm:character}]
    \cref{lem:comp} shows that $\rchi$ respects 1-cell composition. \cref{lem:unit} shows that $\rchi$ preserves the unit 1-cell. \cref{lem:2comp} shows that $\rchi$ respects 2-cell composition. $\rchi$ maps 2-cells in $\Lambda\B$ to identities, so $\rchi$ preserves the unit 2-cell trivially. 
\end{proof}
We now use the functor in \cref{thm:character} to define the character of a strong 2-functor (and so a 2-representation).

\defcharacter*

Conjugation invariance is fundamental to this definition.  In the case that $\mathcal{C}=BG$, the 1-cells in $\Lambda BG$ are equalities \[gh=hk\] for elements $g$, $h$, and $k$ of $G$, and 
traces associated to these equalities are the fundamental objects of interest.  For other choices of $\mathcal{C}$, the morphisms of $\Lambda \mathcal{C}$ are 2-cells 
\[P\odot M\to M\odot Q\] which we consider to be generalized conjugation relations between 1-cells. 

\section{Coshadows, cotraces, and cotwisted endomorphism bicategories}\label{sec:coshadows}
This section has two goals.  The first is to show that the approach above to characters in terms of the bicategorical trace extends to the bicategorical cotrace \cite{barhite2023bicategorical}.  The second goal, and the reason for the relevance of the first, is to more clearly illuminate how the approach of \cite{GK} sits inside the approach here.

Fundamental to the Bartlett and  Ganter--Kapranov character is the following definition.
\begin{defn}\cite[4.4]{Bartlett}\cite[3.1]{GK} Let $F\colon x\to x$ be an endo-1-cell in a bicategory $\mathcal{B}$. Then the {\bf categorical trace} of $F$ is \[\Tr(F)= 2\Hom_{\mathcal{B}}(U_x, F).\] 
\end{defn}
That is, the categorical trace of $F$ is the set of 2-cells from the identity 1-cell on $x$ to $F$.   

\begin{ex} The case of $2\Vect_k$ is particularly illuminating.  For a 1-cell $A = [A_{ij}]$ in $2\Vect_k$, 
\[\Tr(A) = 2\Hom_{2\Vect_k}(1_{[n]}, A) \cong \overset{n}{\underset{i=1}{\bigoplus}}\ A_{ii}.\]
\end{ex}

The categorical trace is not a shadow in the sense of \cref{defn:shadow}, but it a coshadow as defined by Barhite in \cite{barhite2023bicategorical}.   (See \cite[Example 4.4]{barhite2023bicategorical}.)
In this section we briefly describe the generalization of the approach above to characters using shadows to coshadows.  The arguments are the same, but this gives an approach in closer alignment to that in \cite{GK}.

Coshadows have the same properties as  shadows but with respect to the internal hom rather than the bicategorical composition.  A closed 
$\B$ bicategory  is equipped with left and right internal hom-functors
\[
	- \triangleleft - : \B(R, T) \times \B(R, S)^{op} \to \B(S, T)
\]
and
\[
	- \triangleright - : \mathscr{B}(S, T)^{op} \times \mathscr{B}(R, T) \to \mathscr{B}(R, S)
\]
for all triples of 0-cells $R, S, T$ and natural isomorphisms
\begin{equation}
\label{eq:closed_bicat_tensor_hom}
	\mathscr{B}(S, T)(N, P \triangleleft M) \cong \mathscr{B}(R, T)(M \odot N, P) \cong \mathscr{B}(R, S)(M, N \triangleright P)
\end{equation}
for all triples of 1-cells $M : R \to S$, $N : S \to T$, and $P : R \to T$.

\begin{defn}[Compare to \cref{defn:shadow}]\cite[Def 4.1]{barhite2023bicategorical}
    A \textbf{coshadow} for a closed bicategory $\B$ is a category $\mathbf{T}$ and functors \[\coshh{\text{-}}: \B(R, R) \to \mathbf{T} \] for each 0-cell $R$ of $\B$, equipped with natural isomorphisms \[\theta: \coshh{M\triangleright N} \overset{\cong}{\to} \coshh{N \triangleleft M}\] for each $M, N \in \B(R, S)$, such that the following diagrams commute whenever they make sense:

    \[\xymatrix{\coshh{(M\odot N)\triangleright P} \ar[r]^\theta_\cong \ar[d]^\cong_\theta & \coshh{P\triangleleft (M\odot N)} \ar[r]^{\coshh{t}}_\cong & \coshh{(P\triangleleft M)\triangleleft N} \ar[d]^-\theta_\cong\\
    \coshh{M\triangleright (N\triangleright P)} \ar[r]^\cong_\theta & \coshh{(N\triangleright P)\triangleleft M} \ar[r]^\cong_{\coshh{a}} & \coshh{N\triangleright (P\triangleleft M)}}\]

    \[\xymatrix{\coshh{U_R\triangleright M} \ar[r]^\theta_\cong \ar[dr]^\cong_{\coshh{\overline{r}^{-1}}} & \coshh{M\triangleleft U_R} \ar[r]^\theta_\cong \ar[d]^\cong_{\coshh{\overline{\ell}^{-1}}} & \coshh{U_R\triangleright M} \ar[dl]^{\coshh{\overline{r}^{-1}}}_\cong \\ 
    & \coshh{M} &}\]
\end{defn}

The definition of the cotrace parallels the definition of the trace in \cref{defn:bicat_trace}.

\begin{defn}\cite[Definition 4.5]{barhite2023bicategorical}
    Let $\B$ be a closed bicategory with a coshadow and $(M, M^\ast)$ a dual pair with $M\in \B(R, S)$. The \textbf{cotrace} of a 2-cell $f: M\triangleright Q \to P\triangleleft M$, denoted $\co(f)$, is the composite \small{\begin{align*}\coshh{Q} \underset{\cong}{\xto{\overline{r}}} \coshh{U_S \triangleright Q} \underset{\cong}{\xto{\coshh{\epsilon\triangleright\id}}} \coshh{(M^\ast\odot M)\triangleright Q} \underset{\cong}{\xto{\coshh{t}}} \coshh{M^\ast \triangleright (M\triangleright Q)}
    \underset{\cong}{\xto{\coshh{\id\triangleright f}}} \coshh{M^\ast\triangleright(P\triangleleft M)}\\ \hfill\underset{\cong}{\xto{\theta}} \coshh{(P\triangleleft M)\triangleleft M^\ast} \underset{\cong}{\xto{\coshh{t_\ast^{-1}}}} \coshh{P\triangleleft (M\odot M^\ast)}
    \hfill \underset{\cong}{\xto{\id\triangleleft\eta}} \coshh{P\triangleleft U_R} \underset{\cong}{\xto{\coshh{\overline{\ell}^{-1}}}} \coshh{P}\end{align*}}
\end{defn}

Not only are the definitions of the trace and cotrace parallel, but formal results for traces have almost identical proofs for cotraces.  In particular, the results in \cref{sec:remnant,sec:remnant_functors,sec:traces} have the following companions.  (We omit the proofs since they follow so closely.)

Paralleling \cref{Lambda_bicat} we have the following.

\begin{thm}\label{Lambda_bicat_coshadow}\label{coLambdaB_bicat}
    From a closed  bicategory $\mathcal{B}$ there is a bicategory $\cb$ where 
    \begin{itemize}
        \item the 0-cells are the endomorphism 1-cells of $\mathcal{B}$,
        \item the 1-cells $P\to Q$ are pairs $(M,\bar{\alpha})$ where $M$ is a dualizable 1-cell in $\mathcal{B}$ and 
        $\bar{\alpha}$ is a 2-cell 
        \[M\triangleright  P\Rightarrow Q\triangleleft M,\]
        and 
        \item the 2-cells $(M,\bar{\alpha})\Rightarrow (M',\bar{\alpha}')$ are isomorphism 2-cells $\psi\colon M\to M$ of $\mathcal{B}$ so that the diagram 
          \[\xymatrix{M\triangleright P\ar@{=>}[d]_{\psi\triangleright \id } \ar@{=>}[r]^{\bar{\alpha}} 
          & Q\triangleleft M \ar@{=>}[d]^{\id\triangleleft\psi}\\
    M'\triangleright P \ar@{=>}[r]_{\bar{\alpha}'} & Q\triangleleft M'}\]
    commutes.
    \end{itemize}
\end{thm}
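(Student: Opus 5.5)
The plan is to copy the proof of \cref{Lambda_bicat} step by step, trading composition for the internal homs $\triangleright$ and $\triangleleft$. First, for each pair of endomorphism 1-cells $P,Q$, I will check that $\cb(P,Q)$ is a category, as in \cref{LambdaBPQ_Cat}. Composition is vertical composition of 2-cells in $\B$. If $\psi$ and $\psi'$ are morphisms, the required square for $\psi'\circ\psi$ is the vertical pasting of the two squares. This uses functoriality of $-\triangleright P$ and $Q\triangleleft -$, which come from the closed structure. Identities and associativity are inherited from $\B$, and isomorphisms are closed under composition and inverses.

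Next I will define horizontal composition. Take $(M,\bar\alpha)\colon P\to Q$ with $M\colon R\to S$ and $(N,\bar\gamma)\colon Q\to T$ with $N\colon S\to T'$. I set
\[(M,\bar\alpha)\boxdot(N,\bar\gamma)=(M\odot N,\ \bar\alpha\diamondsuit\bar\gamma),\]
where $\bar\alpha\diamondsuit\bar\gamma$ is the composite
\[(M\odot N)\triangleright P\xRightarrow{t} N\triangleright(M\triangleright P)\xRightarrow{\id\triangleright\bar\alpha} N\triangleright(Q\triangleleft M)\xRightarrow{a} (N\triangleright Q)\triangleleft M\xRightarrow{\bar\gamma\triangleleft\id}(T\triangleleft N)\triangleleft M\xRightarrow{t_*^{-1}} T\triangleleft(M\odot N).\]
Here $t$, $t_*$ and $a$ are the canonical isomorphisms from the tensor-hom adjunction \eqref{eq:closed_bicat_tensor_hom}; they appear in the coshadow axioms. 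The 1-cell $M\odot N$ is dualizable because dualizable 1-cells compose. On 2-cells I use $\psi\odot\psi'$. To check that this is a 2-cell, I use naturality of $t$, $a$ and $t_*$ in each variable.

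For the associator and unitors I will take those of $\B$, as in \cref{LambdaB_assoc,LambdaB_unit}. The unit 1-cell on $P$ is $(U_V,\ \overline{\ell}\,\overline{r})$, i.e.\ $U_V\triangleright P\cong P\cong P\triangleleft U_V$. I then have to show that $a_{L,M,N}$ and $r,\ell$ are 2-cells of $\cb$, which means certain squares commute. Once they are 2-cells, the pentagon and triangle axioms hold automatically, because 2-cells of $\cb$ are 2-cells of $\B$ and the axioms already hold in $\B$.

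The main obstacle is the associator square. To handle it, I will pass everything through the adjunction isomorphisms \eqref{eq:closed_bicat_tensor_hom}. The isomorphisms $t$, $t_*$ and $a$ are the mates of associators in $\B$, so their coherences follow from those of $\B$: compatibility of $t$ with $a_{L,M,N}$ and the interchange $a$ between $\triangleright$ and $\triangleleft$. Concretely, after transposing $\bar\alpha$ to its adjoint form $M\odot(M\triangleright P)\odot\cdots$, I expect the two composites for three 1-cells to reduce to the same diagram as \cref{eq:assoctivity_Lambda_B}, with the internal homs inserted. The commutativity then follows from bicategorical coherence \cite{bicat_coherence,MP:coherence}, extended to closed bicategories. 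The unitor square follows the same pattern as \cref{existremunitor}, using that $\overline{r}$ and $\overline{\ell}$ are mates of the unitors.
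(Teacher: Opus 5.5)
Your overall strategy is the one the paper intends: it omits the proof as a close transcription of \cref{Lambda_bicat}. However, the horizontal composition you define is ill-typed, and repairing it reverses the order of composition.

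With the conventions of \eqref{eq:closed_bicat_tensor_hom}, take $M\colon X\to Y$. Then $M\triangleright P$ is defined only for $P\in\B(Y,Y)$, and $Q\triangleleft M$ only for $Q\in\B(X,X)$; both are 1-cells $Y\to X$. So a 1-cell $(M,\bar\alpha)\colon P\to Q$ of $\cb$ has its source $P$ over the \emph{target} of $M$, and its target $Q$ over the \emph{source} of $M$. This is the opposite of \cref{Lambda_bicat}; compare the cotrace, where $f\colon M\triangleright Q\to P\triangleleft M$ with $M\in\B(R,S)$ has $Q$ on $S$.

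Consequently $(N,\bar\gamma)\colon Q\to T$ forces $N\colon W\to X$. Your typing $M\colon R\to S$, $N\colon S\to T'$ is incompatible with $(N,\bar\gamma)$ starting at $Q$.

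The canonical isomorphisms also go the other way from what you wrote. The adjunctions give $t\colon (A\odot B)\triangleright P\cong A\triangleright(B\triangleright P)$ and $t_*\colon P\triangleleft(A\odot B)\cong (P\triangleleft A)\triangleleft B$; these are exactly the isomorphisms in the coshadow axiom and the cotrace. So $(M\odot N)\triangleright P\cong M\triangleright(N\triangleright P)$, not $N\triangleright(M\triangleright P)$.

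Your middle chain $N\triangleright(M\triangleright P)\Rightarrow N\triangleright(Q\triangleleft M)\Rightarrow (N\triangleright Q)\triangleleft M\Rightarrow (T\triangleleft N)\triangleleft M$ is correct. But its ends are $(N\odot M)\triangleright P$ and $T\triangleleft(N\odot M)$, whereas $(M\odot N)\triangleright P$ is in general not even defined. You must therefore set $(M,\bar\alpha)\boxdot(N,\bar\gamma)=(N\odot M,\ldots)$.

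This change propagates through the rest of the proof:
\begin{itemize}
\item horizontal composition of 2-cells becomes $\psi'\odot\psi$;
\item the associator square compares $(N\odot M)\odot L$ with $N\odot(M\odot L)$ via $a_{N,M,L}$;
\item in the unit check the roles of $r$ and $\ell$ are interchanged;
\item in the paper's notation the unit 2-cell is $\overline{\ell}\,\overline{r}^{-1}$, not $\overline{\ell}\,\overline{r}$.
\end{itemize}

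Two further points need attention.

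First, variance. $\triangleright$ is contravariant in its first variable and $\triangleleft$ in its second. So for $\psi\colon M\Rightarrow M'$ one has $\psi\triangleright\id\colon M'\triangleright P\Rightarrow M\triangleright P$ and $\id\triangleleft\psi\colon Q\triangleleft M'\Rightarrow Q\triangleleft M$. The square in the statement only typechecks with $\psi^{-1}$ on the vertical arrows, or equivalently with those arrows reversed. This is where invertibility of $\psi$ is genuinely used here, unlike in \cref{Lambda_bicat}, where it was only forward-looking. Your vertical-pasting argument and the naturality check for $\psi'\odot\psi$ still go through, but they must be written with this variance.

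Second, coherence. The references you cite cover bicategories and shadows, not internal homs. Justify the associator and unitor squares by the Yoneda lemma instead:
\begin{itemize}
\item use naturality to slide $\bar\delta,\bar\beta,\bar\gamma$ into matching positions;
\item what remains is a diagram built from $t,t_*,a,\overline{r},\overline{\ell}$;
\item each of these represents a composite of associators and unitors under \eqref{eq:closed_bicat_tensor_hom};
\item so the diagram commutes because its transpose is a diagram of structure 2-cells in $\B$.
\end{itemize}
With these corrections your plan is a faithful version of the argument in \cref{sec:remnant}.
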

We call this bicategory the \textbf{\coremant\ bicategory}.

\begin{thm}[Compare to \cref{lambda_of_a_composite}]\label{coLambdaF}\label{colambda_of_a_composite}    A strong functor $F\colon \B\to \mathcal{C}$ induces a strong functor $\cob{F}\colon \cb\to \cob{\mathcal{C}}$ of \coremant\, bicategories.
    If $F\colon \B\to \mathcal{C}$ and $G\colon \mathcal{C}\to \mathcal{D}$ are strong functors of bicategories, 
\[\cob{(F\circ G)}=(\cob{F})\circ (\cob{G}).\]
\end{thm}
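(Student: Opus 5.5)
The plan is to mirror the proof of \cref{lambda_of_a_composite}, replacing $\odot$-compatibility of the coherence maps of $F$ with the compatibility of $F$ with the internal homs. For a strong functor $F$ with structure isomorphisms $\phi\colon F(M)\odot F(N)\Rightarrow F(M\odot N)$, the first step is to build canonical comparison 2-cells
\[\lambda\colon F(M\triangleright P)\Rightarrow F(M)\triangleright F(P),\qquad \rho\colon F(Q\triangleleft M)\Rightarrow F(Q)\triangleleft F(M).\]
These come by adjunction \eqref{eq:closed_bicat_tensor_hom}: $\lambda$ is adjoint to
\[F(M)\odot F(M\triangleright P)\xrightarrow{\phi}F(M\odot(M\triangleright P))\xrightarrow{F(\mathrm{ev})}F(P),\]
and $\rho$ is defined in the same way. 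When $M$ is dualizable, $M\triangleright P\cong M^\ast\odot P$ and $Q\triangleleft M\cong Q\odot M^\ast$, and $F(M)$ is dualizable with dual $F(M^\ast)$. Under these identifications $\lambda$ and $\rho$ become composites of $\phi^{\pm1}$, so they are isomorphisms. This is the only place where dualizability is essential.

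On 0-cells and 2-cells, $\cob{F}$ is $F$. On a 1-cell $(M,\bar\alpha)$ it is
\[\cob{F}(M,\bar\alpha)=\bigl(F(M),\ \rho\circ F(\bar\alpha)\circ\lambda^{-1}\bigr).\]
Next we check that a 2-cell $\psi$ of $\cb$ is sent to a 2-cell of $\cob{\mathcal{C}}$. This follows from the naturality of $\lambda$ and $\rho$ in the variable $M$, by applying $F$ to the defining square.

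The analogues of \cref{inducted_functor_lem_1,inducted_functor_lem_2} come next.
\begin{itemize}
\item For composition, $\phi$ (or its inverse, depending on variance) must be a 2-cell between $\cob{F}(M,\bar\alpha)\boxdot\cob{F}(N,\bar\beta)$ and $\cob{F}$ of the composite. The composite in $\cb$ is built from the associativity isomorphisms $t$ and $t_\ast$ of the closed structure, for instance $(M\odot N)\triangleright P\cong N\triangleright(M\triangleright P)$.
\item For units, the unit comparison $\iota$ must be compatible with $\overline{r}$ and $\overline{\ell}$.
\end{itemize}
In each case the required square decomposes, as in \cref{Lambda_F_boxdot}, into naturality squares for $\lambda$ and $\rho$, $F$ applied to naturality squares, and two coherence cells of a new kind. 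These new cells say that $\lambda$ and $\rho$ intertwine $t$, $t_\ast$, $\overline{r}$, $\overline{\ell}$ with $\phi$ and $\iota$.

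The main obstacle is this last point. I would prove it by transposing across the adjunction \eqref{eq:closed_bicat_tensor_hom}: both sides become maps out of $F(M)\odot F(N)\odot F((M\odot N)\triangleright P)$. Unwinding $t$ via evaluation maps reduces the equality to the associativity hexagon for $\phi$ together with the naturality of $\phi$ against evaluation. The unit case reduces similarly to the unit axioms of $F$.

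Finally, for functoriality in $F$, the comparison maps for a composite satisfy
\[\lambda_{F\circ G}=\lambda_F\circ F(\lambda_G),\]
which again is checked after transposing, using $\phi_{F\circ G}=\phi_F\circ F(\phi_G)$. The chain of equalities in the proof of \cref{lambda_of_a_composite} then goes through verbatim with $\lambda,\rho$ in place of $\phi^{\pm1}$.
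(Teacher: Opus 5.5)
Your plan is the one the paper intends: it omits this proof as a formal copy of \cref{lambda_of_a_composite}. You correctly identify the ingredient the paper never mentions: a strong functor carries no compatibility with $\triangleright,\triangleleft$, so comparison cells $\lambda,\rho$ must be built by adjunction.

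The error is in the invertibility step, which you single out as the only use of dualizability. The paper's duals are one-sided ($\eta\colon U\Rightarrow M\odot M^\ast$, $\epsilon\colon M^\ast\odot M\Rightarrow U$), which gives $-\odot M\dashv -\odot M^\ast$ and $M^\ast\odot -\dashv M\odot -$. By \eqref{eq:closed_bicat_tensor_hom}, $M\triangleright P$ is the right adjoint of $-\odot M$ evaluated at $P$, and $Q\triangleleft M$ is the right adjoint of $M\odot -$ evaluated at $Q$. Hence $M\triangleright P\cong P\odot M^\ast$, and $\lambda$ is invertible because it is identified with $\phi^{-1}$. But $M^\ast\odot -$ is the \emph{left} adjoint of $M\odot -$, so $Q\triangleleft M$ has no description via $M^\ast$, and $\rho$ need not be invertible. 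Concretely, in bimodules over $k$-algebras, $Q\triangleleft M$ is a $\Hom$ of left modules, while $M$ is only finitely generated projective on the right. Take extension of scalars along an infinite field extension $K/k$, with $S=k$, $R=M=k[t]$ and $Q=k$; this strong functor has a $\rho$ that is not surjective. In any convention, a one-sided dual turns only one of $M\triangleright(-)$ and $(-)\triangleleft M$ into composition with $M^\ast$; asserting both amounts to assuming a two-sided dual.

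Relatedly, with the paper's conventions the evaluation defining $\lambda$ is $F(M\triangleright P)\odot F(M)\Rightarrow F(P)$, not $F(M)\odot F(M\triangleright P)\Rightarrow F(P)$. This is not cosmetic. If $\triangleright$ were right adjoint to $M\odot -$, as your evaluation map suggests, $\lambda$ would be the map that can fail to be invertible, and $\rho\circ F(\bar\alpha)\circ\lambda^{-1}$ would be undefined.

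The argument survives because $\rho$ is never inverted. It appears only in the forward direction: in $\rho\circ F(\bar\alpha)\circ\lambda^{-1}$, in the squares relating $\rho$ to $t_\ast$ and $\phi$, and in functoriality. Every $\lambda$ you invert has a dualizable first variable ($M$, $M'$, $N$, $N\odot M$, $U$). So the repair is:
\begin{itemize}
\item put the evaluation on the correct side;
\item prove invertibility only for $\lambda$, via $M\triangleright P\cong P\odot M^\ast$;
\item drop the claim for $\rho$.
\end{itemize}

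One item is also missing from your list of new coherence cells. The composite in $\cb$ of $(M,\bar\alpha)\colon P\to Q$ and $(N,\bar\beta)\colon Q\to T$ has underlying 1-cell $N\odot M$, since here $M$ runs from the 0-cell of $Q$ to that of $P$. Its 2-cell passes through $t$, then the mixed isomorphism $a\colon N\triangleright(Q\triangleleft M)\cong(N\triangleright Q)\triangleleft M$ from the coshadow axioms, then $t_\ast$. You therefore also need $a\circ(\id\triangleright\rho)\circ\lambda=(\lambda\triangleleft\id)\circ\rho\circ F(a)$. This follows from the associativity of $\phi$ by the same transposition argument you use for $t$ and $t_\ast$.
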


\begin{thm}[Compare to \cref{thm:character}]
Let $\B$ be a closed bicategory with coshadow functors $\coshh{\text{-}}: \B(R, R) \to \mathbf{T}$. Let $\T$ be $\mathbf{T}$ regarded as a 2-category with identity 2-cells. Define $\overline{\rchi}: \cb \to \T$ by \begin{gather*} \overline{\rchi}(P) = \coshh{P},\\ \overline{\rchi}([N, \overline{\gamma}]) = \co(\overline{\gamma}),\ \text{and} \\ \overline{\rchi}(\lone{\psi}) = \id_{\co(\overline{\gamma})}, \end{gather*} where $\co$ is the bicategorical cotrace with coshadow $\coshh{\text{-}}$. Then $\overline{\rchi}$ is a strict 2-functor.
\end{thm}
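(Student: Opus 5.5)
The proof mirrors that of \cref{thm:character}, with shadow, trace and bicategorical composition replaced by coshadow, cotrace and internal homs. The target $\T$ has only identity 2-cells, so $\overline{\rchi}$ is strict as soon as four things hold: it preserves composition of 1-cells strictly, it sends unit 1-cells to identities, it is well defined on 2-cells, and it preserves composition of 2-cells. I will check these in that order, transporting the four lemmas of \cref{sec:traces}.

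First, the cotrace analog of \cref{lem:comp}. Composition in $\cb$ of $(M,\bar\alpha)\colon P\to Q$ and $(N,\bar\gamma)\colon Q\to T$ should be $M\odot N$ with the composite
\[(M\odot N)\triangleright P\cong N\triangleright(M\triangleright P)\xto{\id\triangleright\bar\alpha}N\triangleright(Q\triangleleft M)\cong(N\triangleright Q)\triangleleft M\xto{\bar\gamma\triangleleft\id}(T\triangleleft N)\triangleleft M\cong T\triangleleft(M\odot N).\]
I need $\co$ of this composite to equal $\co(\bar\gamma)\circ\co(\bar\alpha)$, the cotrace version of \cite[Proposition 7.5]{Ponto_2012}. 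Barhite's cotrace results should supply this. If they do not, I will prove it directly. Take the dual pair $(M\odot N, N^*\odot M^*)$. Expand its coevaluation and evaluation into those of $M$ and $N$ via the associativity isomorphisms $t$, $t_*$ and $a$. Then rearrange the long composite using naturality of $\theta$ and the first coshadow coherence hexagon, exactly as in the shadow argument.

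Second, the analog of \cref{lem:unit}. The unit 1-cell is $U_V$ with $\bar r^{-1}$ and $\bar\ell$ (suitably composed), and its dual pair is $(U_V,U_V)$ with unitors as coevaluation and evaluation. Its cotrace collapses to $\id_{\coshh{P}}$ by the triangular coshadow axiom together with coherence for closed bicategories.

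Third, the analog of \cref{lem:treq}. Let $\psi\colon M\Rightarrow M'$ be an isomorphism making the square in \cref{coLambdaB_bicat} commute. Transport the dual pair along $\psi$: use $\eta'=(\psi\odot\id)\eta_M$ and $\epsilon'=\epsilon_M(\id\odot\psi^{-1})$. Then invoke independence of the cotrace from the choice of dual. A ladder diagram like \cref{comparison_of_traces} then commutes by naturality of $t$, $\theta$ and $t_*^{-1}$ in $M$, with the middle square being the hypothesis. This gives $\co(\bar\alpha)=\co(\bar\alpha')$, so $\overline\rchi(\psi)=\id$ is well defined. Preservation of vertical composition and of identity 2-cells is then automatic, as in \cref{lem:2comp}.

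The main obstacle is the first step, compatibility of the cotrace with composition. It involves the most coherence bookkeeping, because the internal-hom associativity isomorphisms $t$ and $t_*$ and the mixed isomorphism $a$ must be tracked through the coshadow's $\theta$. Here I would lean on the coherence theorem for closed bicategories and on Barhite's existing composition result for cotraces.
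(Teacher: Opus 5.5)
Your proposal is correct and is essentially the paper's argument. The paper omits the proof, saying it follows the proof of \cref{thm:character} with shadows and traces replaced by coshadows and cotraces, which is exactly your four steps: the cotrace analogues of \cref{lem:comp} (via the cotrace version of \cite[Proposition 7.5]{Ponto_2012}), \cref{lem:unit}, \cref{lem:treq} and \cref{lem:2comp}. One bookkeeping slip: with the paper's diagrammatic $\odot$, the 1-cell $M$ of $(M,\bar\alpha)\colon P\to Q$ runs from the base of $Q$ to the base of $P$, so the composite's underlying 1-cell is $N\odot M$ with dual $M^*\odot N^*$, not $M\odot N$. Your displayed isomorphisms $(N\odot M)\triangleright P\cong N\triangleright(M\triangleright P)$ and $(T\triangleleft N)\triangleleft M\cong T\triangleleft(N\odot M)$ are the correct ones once the order is written this way.
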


 \begin{defn}[Compare to \cref{lab:def:character}]\label{cocharacter}
    Let $\B$ be a closed bicategory with coshadow $\coshh{}$ valued in $\mathbf{T}$ and $\rho\colon BG\to \B$ be a 2-representation. Let $\T$ be $\mathbf{T}$ regarded as a 2-category with identity 2-cells. The \textbf{cocharacter} of $\rho$ is the composite
    \[\Lambda BG\xto{\cob{\rho}} \cob{\B} \xto{\overline{\chi}}\T,\] where $\cob{\rho}(g) = \rho(g)$ and $\cob{\rho} = \overline{\gamma}$.
\end{defn}

\begin{ex}\label{ex:GK}
    Let $\rho: BG \to \B$ be a 2-representation, where $\B$ is a closed bicategory. Let the coshadow in the definition of $\overline{\rchi}$ be $\Tr$, the categorical trace as defined in \cite[3.1]{GK}, with target category $T$. The categorical character, $\Tr(\rho)$, from \cite[4.8]{GK} is given by the commutative diagram: \[\xymatrix{\Lambda BG \ar[rr]^{\Tr(\rho)} \ar[dr]_{\overline{\Lambda}\rho} & & \T\\
    & \cb \ar[ur]_{\overline{\rchi}} &}\] On the level of 1-cells, $\overline{\rchi}(\overline{\gamma}) = \co(\overline{\gamma})$. 
\end{ex}

Just as \cref{lambda_of_a_composite} implies that character commutes with restriction, \cref{colambda_of_a_composite} implies that the cocharacter commutes with restriction.

\section{Induced representations and their characters}\label{sec:induction}

There are four cases to consider here: each of induction and coinduction with traces and cotraces.  
Traces and cotraces share formal properties and induction (resp. coinduction) is a initial object in an under category (resp.  terminal object in an over category) so we will consider only the case of induction with traces and leave the remaining cases to the interested reader. 

Recall from \cref{def:induced_rep} that if ${\rhov}$ is an $\mathcal{C}$-2-representation  and $\iota\colon \mathcal{C}\to \mathcal{D}$ is a strong 2-functor, then the {\bf induced $\mathcal{D}$-2-representation} $I_\mathcal{C}^\mathcal{D}{\rhov}$ is the initial object in the under bicategory ${\rhov}\downarrow \iota^*$. 

Since the proofs of the main results in this section, \cref{intro:induced,thm:trace_induced},  primarily follow from carefully identifying the bicategories in question,  we first unravel the definitions.

\begin{defn}\label{def:comma_bicat}\cite[I.2.5]{gray}
    Let 
% https://q.uiver.app/#q=WzAsMyxbMSwwLCJcXG1hdGhjYWx7Q30iXSxbMCwxLCJcXG1hdGhjYWx7RH0iXSxbMSwxLCJcXG1hdGhjYWx7RX0iXSxbMCwyLCJGIl0sWzEsMiwiRyIsMl1d
\[\begin{tikzcd}
	& {\mathcal{X}} \\
	{\mathcal{Y}} & {\mathcal{Z}}
	\arrow["F", from=1-2, to=2-2]
	\arrow["G"', from=2-1, to=2-2]
\end{tikzcd}\]
be a diagram of strong 2-functors.  Then  the {\bf comma bicategory}, $F\downarrow G$, has 
\begin{enumerate}
    \item 0-cells triples 
    \[(x, y, \zeta\colon F(x)\to G(y))\] 
    where $x$ is a 0-cell of $\mathcal{X}$, $y$ is a 0-cell of $\mathcal{Y}$ and $\zeta$ is a 1-cell in $\mathcal{Z}$.
    \item 1-cells $(x, y, \zeta\colon F(x)\to G(y)) \to (x', y', \zeta'\colon F(x')\to G(y'))$ are triples 
    \[(\phi\colon x\to x', \psi\colon y\to y', \alpha\colon  \zeta\odot G(\psi) \Rightarrow F(\phi)\odot \zeta')\]
    \item 2-cells $(\phi,\psi,\alpha)\to (\phi',\psi',\alpha')$ are pairs 
    \[(\beta\colon \phi\Rightarrow \phi', \gamma\colon \psi\Rightarrow \psi')\] so that 
the following diagram commutes.
% https://q.uiver.app/#q=WzAsNCxbMCwwLCJcXHpldGFcXG9kb3QgRyhcXHBzaSkiXSxbMSwwLCJGKFxccGhpKVxcb2RvdCBcXHpldGEnIl0sWzEsMSwiRihcXHBoaScpXFxvZG90IFxcemV0YSciXSxbMCwxLCJcXHpldGFcXG9kb3QgRyhcXHBzaScpIl0sWzAsMSwiXFxhbHBoYSIsMCx7ImxldmVsIjoyfV0sWzEsMiwiRihcXGJldGEpXFxvZG90IFxcaWQiLDAseyJsZXZlbCI6Mn1dLFswLDMsIlxcaWRcXG9kb3QgRyhcXGdhbW1hKSIsMix7ImxldmVsIjoyfV0sWzMsMiwiXFxhbHBoYSciLDIseyJsZXZlbCI6Mn1dXQ==
\begin{equation}\label{eq:comma_bicat_2_cell}
\begin{tikzcd}
	{\zeta\odot G(\psi)} & {F(\phi)\odot \zeta'} \\
	{\zeta\odot G(\psi')} & {F(\phi')\odot \zeta'}
	\arrow["\alpha", Rightarrow,  from=1-1, to=1-2]
	\arrow["{\id\odot G(\gamma)}"', Rightarrow,  from=1-1, to=2-1]
	\arrow["{F(\beta)\odot \id}", Rightarrow,  from=1-2, to=2-2]
	\arrow["{\alpha'}"', Rightarrow,  from=2-1, to=2-2]
\end{tikzcd}
\end{equation}
\end{enumerate}
\end{defn}

We now consider  \cref{def:comma_bicat} for functor bicategories and further expand the transformations since we will more direct access to them in \cref{lem:0:cell:functor,lem:1:cell:functor}.  

\begin{ex}
\label{ex:under:2}
Let ${\rhov}\colon \mathcal{C}\to \mathcal{B}$ and $\iota\colon \mathcal{C}\to \mathcal{D}$
be strong 2-functors and $\Fun(\mathcal{D},\mathcal{B})$ be the bicategory of strong functors, strong transformations, and modifications.  Consider the diagram
% https://q.uiver.app/#q=WzAsMyxbMSwwLCJcXGFzdCJdLFswLDEsIlxcRnVuKFxcbWF0aGNhbHtEfSxcXG1hdGhjYWx7Qn0pIl0sWzEsMSwiXFxGdW4oXFxtYXRoY2Fse0N9LFxcbWF0aGNhbHtCfSkiXSxbMCwyLCJWIl0sWzEsMiwiXFxpb3RhXioiLDJdXQ==
\[\begin{tikzcd}
	& \ast \\
	{\Fun(\mathcal{D},\mathcal{B})} & {\Fun(\mathcal{C},\mathcal{B})}
	\arrow["{\rhov}", from=1-2, to=2-2]
	\arrow["{\iota^*}"', from=2-1, to=2-2]
\end{tikzcd}\]
Then ${\rhov}\downarrow \iota^*$ is the bicategory where 
\begin{enumerate}
    \item \label{ex:under:2:it:0}
    a 0-cell is a tuple consisting of 
    \begin{enumerate}
            \item \label{ex:under:2:it:0:1} a strong functor ${\sigma}\colon \mathcal{D}\to \mathcal{B}$, 
            \item a 1-cell  $\zeta_c\colon {\rhov}(c)\to {\sigma}(\iota(c))$ in $\mathcal{B}$ for each 0-cell $c$ in $\mathcal{C}$,
            and 
            \item an isomorphism 2-cell 
\[\zeta_f\colon \zeta_c\odot {\sigma}(\iota(f)) \Rightarrow {\rhov}(f)\odot \zeta_{c'}\] in $\mathcal{B}$ for each 1-cell $f\colon c\to c'$ in $\mathcal{C}$
so that 
\begin{enumerate}
\item the diagrams in \cref{ex:under:2:it:0:1:fig:compose,ex:under:2:it:0:1:fig:unit} commute 
and 
\item the diagram in \cref{ex:under:2:nat_1} commutes for each 2-cell $\vartheta\colon f\Rightarrow f'$ in $\mathcal{D}$.
\end{enumerate} 
           
    \end{enumerate}
    \item\label{ex:under:2:it:1} a 1-cell $({\sigma},\zeta)\to ({\sigma}',\zeta')$ is a tuple consisting of 
    \begin{enumerate}
        \item a 1-cell $\psi_d\colon {\sigma}(d)\to {\sigma}'(d)$ in $\mathcal{B}$ for each 0-cell $d\in \mathcal{D}$,
        \item an isomorphism 2-cell 
\[\psi_g\colon \psi_d\odot {\sigma}'(g)\Rightarrow {\sigma}(g)\odot \psi_{d'} \] in $\mathcal{B}$ for each 1-cell $g\colon d\to d'$ in $\mathcal{D}$
        so that 
\begin{enumerate}
\item the diagrams in \cref{ex:under:2:it:1:fig:compose,ex:under:2:it:1:fig:unit} commute and 
\item the diagram in \cref{ex:under:2:nat_2} commutes for all 2-cells  $\vartheta\colon g\Rightarrow g'$ in $\mathcal{D}$. 
\end{enumerate}
        \item a 2-cell 
\[\alpha_c\colon \zeta_c\odot \psi_{\iota(c)}\Rightarrow \zeta'_c \] in $\mathcal{B}$ for each 0-cell $c$ of $\mathcal{C}$
        so that the diagram in \cref{ex:under:2:it:1:fig:modification} commutes. 
    \end{enumerate}
    \item \label{ex:under:2:it:3}a 2-cell $(\psi,\beta)\to (\psi',\beta')$ is a 2-cell 
\[\gamma_d\colon \psi_d\Rightarrow\psi'_d\]
 in $\mathcal{B}$ for each 0-cell $d$ of $\mathcal{D}$
         so that the diagrams in \cref{ex:under:2:it:2:fig:modification,ex:under:2:it:2:fig:comma} commute. 
\end{enumerate}
\end{ex}

\begin{figure}
     \centering
     \begin{subfigure}[b]{0.25\textwidth}
\adjustbox{scale=1,center}{
% https://q.uiver.app/#q=WzAsNCxbMCwwLCJcXHpldGFfY1xcb2RvdCBcXHNpZ21hIFxcaW90YShmKSJdLFswLDEsIlxcemV0YV9jXFxvZG90IFxcc2lnbWEgXFxpb3RhKGYnKSJdLFsxLDAsIlxccmhvKGYpXFxvZG90IFxcemV0YV97Yyd9Il0sWzEsMSwiXFxyaG8oZicpXFxvZG90IFxcemV0YV97Yyd9Il0sWzAsMSwiXFxpZFxcb2RvdCBcXHNpZ21hXFxpb3RhKFxcdmFydGhldGEpIiwyLHsibGV2ZWwiOjJ9XSxbMCwyLCJcXHpldGFfZiIsMCx7ImxldmVsIjoyfV0sWzEsMywiXFx6ZXRhX3tmJ30iLDIseyJsZXZlbCI6Mn1dLFsyLDMsIlxccmhvKFxcdmFydGhldGEpXFxvZG90IFxcaWQiLDAseyJsZXZlbCI6Mn1dXQ==
\begin{tikzcd}
	{\zeta_c\odot \sigma \iota(f)} & {\rho(f)\odot \zeta_{c'}} \\
	{\zeta_c\odot \sigma \iota(f')} & {\rho(f')\odot \zeta_{c'}}
	\arrow["{\zeta_f}", Rightarrow,  from=1-1, to=1-2]
	\arrow["{\id\odot \sigma\iota(\vartheta)}", Rightarrow,  from=1-1, to=2-1]
	\arrow["{\rho(\vartheta)\odot \id}", Rightarrow,  from=1-2, to=2-2]
	\arrow["{\zeta_{f'}}"', Rightarrow,  from=2-1, to=2-2]
\end{tikzcd}
}
\caption{}
         \label{ex:under:2:nat_1}
     \end{subfigure}
     \hspace{1in}
     \begin{subfigure}[b]{0.25\textwidth}
\adjustbox{scale=1,center}{
% https://q.uiver.app/#q=WzAsNCxbMCwwLCJcXHBzaV9kXFxvZG90IHtcXHJob3d9JyhnKSAiXSxbMSwwLCJ7XFxyaG93fShnKVxcb2RvdCBcXHBzaV97ZCd9ICJdLFsxLDEsIntcXHJob3d9KGcnKVxcb2RvdCBcXHBzaV97ZCd9ICJdLFswLDEsIiBcXHBzaV9kXFxvZG90IHtcXHJob3d9JyhnJykiXSxbMCwxLCJcXHBzaV9nIiwwLHsibGV2ZWwiOjJ9XSxbMywyLCJcXHBzaV97Zyd9IiwyLHsibGV2ZWwiOjJ9XSxbMCwzLCJcXGlkXFxvZG90IFxccmhvdycoXFx2YXJ0aGV0YSkiLDIseyJsZXZlbCI6Mn1dLFsxLDIsIlxccmhvdyhcXHZhcnRoZXRhKVxcb2RvdCBcXGlkIiwwLHsibGV2ZWwiOjJ9XV0=
\begin{tikzcd}
	{\psi_d\odot {\sigma}'(g) } & {{\sigma}(g)\odot \psi_{d'} } \\
	{ \psi_d\odot {\sigma}'(g')} & {{\sigma}(g')\odot \psi_{d'} }
	\arrow["{\psi_g}", Rightarrow,  from=1-1, to=1-2]
	\arrow["{\id\odot \sigma'(\vartheta)}"', Rightarrow,  from=1-1, to=2-1]
	\arrow["{\sigma(\vartheta)\odot \id}", Rightarrow,  from=1-2, to=2-2]
	\arrow["{\psi_{g'}}"', Rightarrow,  from=2-1, to=2-2]
\end{tikzcd}}
\caption{}
         \label{ex:under:2:nat_2}
     \end{subfigure}
\caption{Naturality conditions}\label{ex:nat_diagrams}
\end{figure}

\begin{figure}
     \centering
     \begin{subfigure}[b]{0.53\textwidth}
\adjustbox{scale=.9,center}{% https://q.uiver.app/#q=WzAsNixbMCwwLCJcXHpldGFfY1xcb2RvdCBcXHNpZ21hIChcXGlvdGEoZikpXFxvZG90IFxcc2lnbWEgKFxcaW90YShmJykpIl0sWzAsMSwiXFx6ZXRhX2NcXG9kb3QgXFxzaWdtYSAoXFxpb3RhKGYpXFxvZG90IFxcaW90YShmJykpIl0sWzAsMiwiXFx6ZXRhX2NcXG9kb3QgXFxzaWdtYSAoXFxpb3RhKGZcXG9kb3QgZicpKSJdLFsxLDAsIlxccmhvKGYpXFxvZG90IFxcemV0YV97Yyd9XFxvZG90IFxcc2lnbWEgKFxcaW90YShmJykpIl0sWzIsMCwiXFxyaG8oZilcXG9kb3QgXFxyaG8oZicpXFxvZG90IFxcemV0YV97YycnfSJdLFsyLDIsIlxccmhvKGZcXG9kb3QgZicpXFxvZG90IFxcemV0YV97YycnfSJdLFswLDEsIiIsMCx7ImxldmVsIjoyfV0sWzAsMywiXFx6ZXRhX2ZcXG9kb3QgMSIsMCx7ImxldmVsIjoyfV0sWzEsMiwiIiwwLHsibGV2ZWwiOjJ9XSxbMiw1LCJcXHpldGFfe2ZcXG9kb3QgZ30iLDAseyJsZXZlbCI6Mn1dLFszLDQsIjFcXG9kb3QgXFx6ZXRhX2ciLDAseyJsZXZlbCI6Mn1dLFs0LDUsIiIsMCx7ImxldmVsIjoyfV1d
\begin{tikzcd}
	{\zeta_c\odot \sigma (\iota(f))\odot \sigma (\iota(f'))} & {\rho(f)\odot \zeta_{c'}\odot \sigma (\iota(f'))} & {\rho(f)\odot \rho(f')\odot \zeta_{c''}} \\
	{\zeta_c\odot \sigma (\iota(f)\odot \iota(f'))} \\
	{\zeta_c\odot \sigma (\iota(f\odot f'))} && {\rho(f\odot f')\odot \zeta_{c''}}
	\arrow["{\zeta_f\odot 1}", Rightarrow,  from=1-1, to=1-2]
	\arrow[Rightarrow,  from=1-1, to=2-1]
	\arrow["{1\odot \zeta_g}", Rightarrow,  from=1-2, to=1-3]
	\arrow[Rightarrow,  from=1-3, to=3-3]
	\arrow[Rightarrow,  from=2-1, to=3-1]
	\arrow["{\zeta_{f\odot g}}", Rightarrow,  from=3-1, to=3-3]
\end{tikzcd}
}
         \caption{}
         \label{ex:under:2:it:0:1:fig:compose}
     \end{subfigure}
     \hfill 
     \begin{subfigure}[b]{0.38\textwidth}
         \centering
\adjustbox{scale=.9,center}{
% https://q.uiver.app/#q=WzAsNixbMCwwLCJcXHpldGFfY1xcb2RvdCBVX3tcXHNpZ21hKFxcaW90YShjKSl9Il0sWzEsMCwiXFx6ZXRhX2MiXSxbMCwxLCJcXHpldGFfY1xcb2RvdCBcXHNpZ21hKFVfe1xcaW90YShjKX0pIl0sWzAsMiwiXFx6ZXRhX2NcXG9kb3QgXFxzaWdtYShcXGlvdGEoVV97Y30pKSJdLFsyLDIsIlxccmhvKFVfYylcXG9kb3QgXFx6ZXRhX3tjfSJdLFsyLDAsIlVfe1xccmhvKGMpfVxcb2RvdCBcXHpldGFfYyJdLFswLDEsInIiLDAseyJsZXZlbCI6Mn1dLFswLDIsIiIsMCx7ImxldmVsIjoyfV0sWzIsMywiIiwwLHsibGV2ZWwiOjJ9XSxbMyw0LCJcXHpldGFfe1VfY30iLDAseyJsZXZlbCI6Mn1dLFs1LDEsIlxcZWxsIiwyLHsibGV2ZWwiOjJ9XSxbNSw0LCIiLDAseyJsZXZlbCI6Mn1dXQ==
\begin{tikzcd}
	{\zeta_c\odot U_{\sigma(\iota(c))}} & {\zeta_c} & {U_{\rho(c)}\odot \zeta_c} \\
	{\zeta_c\odot \sigma(U_{\iota(c)})} \\
	{\zeta_c\odot \sigma(\iota(U_{c}))} && {\rho(U_c)\odot \zeta_{c}}
	\arrow["r", Rightarrow,  from=1-1, to=1-2]
	\arrow[Rightarrow,  from=1-1, to=2-1]
	\arrow["\ell"', Rightarrow,  from=1-3, to=1-2]
	\arrow[Rightarrow,  from=1-3, to=3-3]
	\arrow[Rightarrow,  from=2-1, to=3-1]
	\arrow["{\zeta_{U_c}}", Rightarrow,  from=3-1, to=3-3]
\end{tikzcd}}
         \caption{}
         \label{ex:under:2:it:0:1:fig:unit}
     \end{subfigure}

     \begin{subfigure}[b]{0.53\textwidth}
\adjustbox{scale=.9,center}{
% https://q.uiver.app/#q=WzAsNSxbMCwwLCJcXHBzaV9kXFxvZG90IHtcXHNpZ21hfScoZylcXG9kb3Qge1xcc2lnbWF9JyhnJykiXSxbMCwxLCJcXHBzaV9kXFxvZG90IHtcXHNpZ21hfScoZ1xcb2RvdCBnJykiXSxbMSwwLCJ7XFxzaWdtYX0oZylcXG9kb3QgXFxwc2lfe2QnfVxcb2RvdCB7XFxzaWdtYX0nKGcnKSJdLFsyLDAsIntcXHNpZ21hfShnKVxcb2RvdCB7XFxzaWdtYX0oZycpXFxvZG90IFxccHNpX3tkJyd9Il0sWzIsMSwie1xcc2lnbWF9KGdcXG9kb3QgZycpXFxvZG90IFxccHNpX3tkJyd9Il0sWzAsMSwiIiwwLHsibGV2ZWwiOjJ9XSxbMCwyLCJcXHBzaV9nXFxvZG90IDEiLDAseyJsZXZlbCI6Mn1dLFsxLDQsIlxccHNpX3tnXFxvZG90IGcnfSIsMix7ImxldmVsIjoyfV0sWzIsMywiMVxcb2RvdCBcXHBzaV97Zyd9IiwwLHsibGV2ZWwiOjJ9XSxbMyw0LCIiLDAseyJsZXZlbCI6Mn1dXQ==
\begin{tikzcd}
	{\psi_d\odot {\sigma}'(g)\odot {\sigma}'(g')} & {{\sigma}(g)\odot \psi_{d'}\odot {\sigma}'(g')} & {{\sigma}(g)\odot {\sigma}(g')\odot \psi_{d''}} \\
	{\psi_d\odot {\sigma}'(g\odot g')} && {{\sigma}(g\odot g')\odot \psi_{d''}}
	\arrow["{\psi_g\odot 1}", Rightarrow,  from=1-1, to=1-2]
	\arrow[Rightarrow,  from=1-1, to=2-1]
	\arrow["{1\odot \psi_{g'}}", Rightarrow,  from=1-2, to=1-3]
	\arrow[Rightarrow,  from=1-3, to=2-3]
	\arrow["{\psi_{g\odot g'}}"', Rightarrow,  from=2-1, to=2-3]
\end{tikzcd}}
    \caption{}\label{ex:under:2:it:1:fig:compose}
     \end{subfigure}
         \hfill 
     \begin{subfigure}[b]{0.38\textwidth}
\adjustbox{scale=.9,center}{
% https://q.uiver.app/#q=WzAsNSxbMCwwLCJcXHBzaV9kXFxvZG90IFVfe3tcXHNpZ21hfScoZCl9Il0sWzEsMCwiXFxwc2lfZCJdLFsyLDAsIlVfe3tcXHNpZ21hfShkKX1cXG9kb3QgXFxwc2lfZCJdLFswLDEsIlxccHNpX2RcXG9kb3Qge1xcc2lnbWF9JyhVX3tkfSkiXSxbMiwxLCJ7XFxzaWdtYX0oVV9kKVxcb2RvdCBcXHBzaV9kIl0sWzAsMSwiciIsMCx7ImxldmVsIjoyfV0sWzAsMywiIiwwLHsibGV2ZWwiOjJ9XSxbMiwxLCJcXGVsbCIsMix7ImxldmVsIjoyfV0sWzIsNCwiIiwwLHsibGV2ZWwiOjJ9XSxbMyw0LCJcXHBzaV97VV9kfSIsMix7ImxldmVsIjoyfV1d
\begin{tikzcd}
	{\psi_d\odot U_{{\sigma}'(d)}} & {\psi_d} & {U_{{\sigma}(d)}\odot \psi_d} \\
	{\psi_d\odot {\sigma}'(U_{d})} && {{\sigma}(U_d)\odot \psi_d}
	\arrow["r", Rightarrow,  from=1-1, to=1-2]
	\arrow[Rightarrow,  from=1-1, to=2-1]
	\arrow["\ell"', Rightarrow,  from=1-3, to=1-2]
	\arrow[Rightarrow,  from=1-3, to=2-3]
	\arrow["{\psi_{U_d}}"', Rightarrow,  from=2-1, to=2-3]
\end{tikzcd}
}
     \caption{}\label{ex:under:2:it:1:fig:unit}
     \end{subfigure}
    \hfill 
\caption{Transformation axioms}\label{ex:trans_diagrams}
\end{figure}

\begin{figure}
     \centering
\hfill 
          \begin{subfigure}[b]{0.35\textwidth}
\adjustbox{scale=.9,center}{
% https://q.uiver.app/#q=WzAsNCxbMCwwLCJcXHBzaV9kXFxvZG90IHtcXHNpZ21hfScoZykiXSxbMCwxLCJcXHBzaV9kJ1xcb2RvdCB7XFxzaWdtYX0nKGcpIl0sWzEsMCwie1xcc2lnbWF9KGcpXFxvZG90IFxccHNpX3tkJ30iXSxbMSwxLCJ7XFxzaWdtYX0oZylcXG9kb3QgXFxwc2knX3tkJ30iXSxbMCwxLCJcXGdhbW1hX2RcXG9kb3QgMSIsMix7ImxldmVsIjoyfV0sWzAsMiwiXFxwc2lfZyIsMCx7ImxldmVsIjoyfV0sWzEsMywiXFxwc2knX2ciLDIseyJsZXZlbCI6Mn1dLFsyLDMsIjFcXG9kb3QgXFxnYW1tYV97ZCd9IiwwLHsibGV2ZWwiOjJ9XV0=
\begin{tikzcd}
	{\psi_d\odot {\sigma}'(g)} & {{\sigma}(g)\odot \psi_{d'}} \\
	{\psi_d'\odot {\sigma}'(g)} & {{\sigma}(g)\odot \psi'_{d'}}
	\arrow["{\psi_g}", Rightarrow,  from=1-1, to=1-2]
	\arrow["{\gamma_d\odot 1}"', Rightarrow,  from=1-1, to=2-1]
	\arrow["{1\odot \gamma_{d'}}", Rightarrow,  from=1-2, to=2-2]
	\arrow["{\psi'_g}"', Rightarrow,  from=2-1, to=2-2]
\end{tikzcd}
}
\caption{}\label{ex:under:2:it:2:fig:modification}
     \end{subfigure}
\hfill 
     \begin{subfigure}[b]{0.55\textwidth}
\adjustbox{scale=.9,center}{
% https://q.uiver.app/#q=WzAsNSxbMCwwLCJcXHpldGFfY1xcb2RvdCBcXHBzaV97XFxpb3RhKGMpfVxcb2RvdCBcXHNpZ21hJyhcXGlvdGEoZikpIl0sWzAsMSwiXFx6ZXRhX2MnXFxvZG90IFxcc2lnbWEnKFxcaW90YShmKSkiXSxbMSwwLCJcXHpldGFfY1xcb2RvdCBcXHNpZ21hKFxcaW90YShmKSlcXG9kb3QgXFxwc2lfe1xcaW90YShjJyl9Il0sWzIsMCwiXFxyaG8oZilcXG9kb3QgXFx6ZXRhX3tjJ31cXG9kb3QgXFxwc2lfe1xcaW90YShjJyl9Il0sWzIsMSwiXFxyaG8oZilcXG9kb3QgXFx6ZXRhJ197Yyd9Il0sWzAsMSwiXFxhbHBoYV9jXFxvZG90IDEiLDAseyJsZXZlbCI6Mn1dLFswLDIsIjFcXG9kb3QgXFxwc2lfe1xcaW90YShmKX0iLDAseyJsZXZlbCI6Mn1dLFsxLDQsIlxcemV0YSdfZiIsMix7ImxldmVsIjoyfV0sWzIsMywiXFx6ZXRhX2ZcXG9kb3QgMSIsMCx7ImxldmVsIjoyfV0sWzMsNCwiMVxcb2RvdCBcXGFscGhhX3tjJ30iLDAseyJsZXZlbCI6Mn1dXQ==
\begin{tikzcd}
	{\zeta_c\odot \psi_{\iota(c)}\odot \sigma'(\iota(f))} & {\zeta_c\odot \sigma(\iota(f))\odot \psi_{\iota(c')}} & {\rho(f)\odot \zeta_{c'}\odot \psi_{\iota(c')}} \\
	{\zeta_c'\odot \sigma'(\iota(f))} && {\rho(f)\odot \zeta'_{c'}}
	\arrow["{1\odot \psi_{\iota(f)}}", Rightarrow,  from=1-1, to=1-2]
	\arrow["{\alpha_c\odot 1}", Rightarrow,  from=1-1, to=2-1]
	\arrow["{\zeta_f\odot 1}", Rightarrow,  from=1-2, to=1-3]
	\arrow["{1\odot \alpha_{c'}}", Rightarrow,  from=1-3, to=2-3]
	\arrow["{\zeta'_f}"', Rightarrow,  from=2-1, to=2-3]
\end{tikzcd}
}
\caption{}\label{ex:under:2:it:1:fig:modification}
     \end{subfigure}
\hfill
\caption{Modification axioms}
\end{figure}

\begin{figure}
     \centering
     % https://q.uiver.app/#q=WzAsMyxbMCwwLCJcXGVwc2lsb25fY1xcb2RvdCBcXHBzaV97XFxpb3RhKGMpfSJdLFsxLDEsIlxcZXBzaWxvbidfYyJdLFswLDIsIlxcZXBzaWxvbl9jXFxvZG90IFxccHNpJ197XFxpb3RhKGMpfSJdLFswLDEsIlxcYmV0YV9jIiwwLHsibGV2ZWwiOjJ9XSxbMCwyLCIxXFxvZG90IFxcZ2FtbWFfe1xcaW90YShjKX0iLDIseyJsZXZlbCI6Mn1dLFsyLDEsIlxcYmV0YSdfYyIsMix7ImxldmVsIjoyfV1d
\begin{tikzcd}[row sep=0in]
	{\epsilon_c\odot \psi_{\iota(c)}} \\
	& {\epsilon'_c} \\
	{\epsilon_c\odot \psi'_{\iota(c)}}
	\arrow["{\beta_c}", Rightarrow, from=1-1, to=2-2]
	\arrow["{1\odot \gamma_{\iota(c)}}"', Rightarrow, from=1-1, to=3-1]
	\arrow["{\beta'_c}"', Rightarrow, from=3-1, to=2-2]
\end{tikzcd}
	\caption{The condition in \eqref{eq:comma_bicat_2_cell}}
        \label{fig:ex:under:2}\label{ex:under:2:it:2:fig:comma}
\end{figure}

\begin{rmk}
Let $F,G\colon \mathcal{B}\to  \mathcal{C}$ be strong functors of bicategories and $\xi\colon F\to G$ be a transformation.  
If a 1-cell $f\colon b\to b'$ in $\mathcal{B}$ is dualizable with dual $f^*\colon b'\to b$, then the mate \cite[p. 5]{Ponto_2012} of $\xi_{f^*}$ is a 2-cell 
\[F(f)\odot \xi_{b'}\Rightarrow \xi_b\odot G(f)\]
and this is the inverse of $\xi_f$.  
This is essentially \cite[9.4]{Ponto_2012}.

In particular, if all 1-cells in $\mathcal{C}$ and $\mathcal{D}$ are dualizable in \cref{ex:under:2}, the isomorphism conditions on $\zeta$ and $\psi$ in \cref{ex:under:2} are automatically satisfied.

\end{rmk}

The following definition looks forward to \cref{intro:induced,thm:trace_induced} and is a generalization of the dualizability condition in \cref{thm:intro:induced_classical}.
\begin{defn}\label{def:conditions_for_induction}
If all 1-cells of $\mathcal{C}$ and $\mathcal{D}$ are dualizable, $({\rhov}\downarrow \iota^*)^d$ is the subbicategory of ${\rhov}\downarrow \iota^*$ consisting of 
\begin{itemize}
\item 0-cells where the 1-cell  $\zeta_c\colon {\rhov}(c)\to {\sigma}(\iota(c))$ in $\mathcal{B}$ is dualizable 
 for each 0-cell $c$ in $\mathcal{C}$
and
\item 1-cells where  the 1-cell $\psi_d\colon {\sigma}(d)\to {\sigma}'(d)$ in $\mathcal{B}$ is dualizable for each 0-cell $d\in \mathcal{D}$,
\end{itemize}
\end{defn}
In the case where $\mathcal{C}=BH$ and $\mathcal{D}=BG$, the dualizability conditions in \cref{ex:under:2} on the 1-cells of $\mathcal{C}$ and $\mathcal{D}$ are satisfied.  So this definition requires the 1-cells \[\zeta_\ast\colon {\rhov}(\ast)\to {\sigma}(\ast)\]
and 
\[\psi_\ast\colon {\sigma}(\ast)\to {\sigma}'(\ast)\]
for the single 0-cells $\ast$ of $\mathcal{C}$ and $\mathcal{D}$ are dualizable.  
This condition should be regarded as analogous to the familiar finiteness conditions in the compatibility of induction with classical characters. 

\cref{ex:under:2} gives a very explicit description of the over category used to define the induced 2-representations.  We now turn to the induced characters. 

\begin{ex}\label{ex:under:3}
Let ${\rhov}$ and  $\iota$ be as in \cref{ex:under:2} and $\chi$ be as in \cref{thm:character}.
% https://q.uiver.app/#q=WzAsNCxbMSwwLCJcXGFzdCJdLFsxLDEsIlxcRnVuKFxcTGFtYmRhIFxcbWF0aGNhbHtDfSxcXExhbWJkYSBcXG1hdGhjYWx7Qn0pIl0sWzEsMiwiXFxGdW4oXFxMYW1iZGEgXFxtYXRoY2Fse0N9LFxcbWF0aGNhbHtUfSkiXSxbMCwyLCJcXEZ1bihcXExhbWJkYSBcXG1hdGhjYWx7RH0sXFxtYXRoY2Fse1R9KSJdLFswLDEsIlxcTGFtYmRhIFYiLDJdLFsxLDIsIlxcY2hpXyoiLDJdLFszLDIsIihcXExhbWJkYSBcXGlvdGEpXioiXV0=
\[\begin{tikzcd}
	& \ast \\
	& {\Fun(\Lambda \mathcal{C},\Lambda \mathcal{B})} \\
	{\Fun(\Lambda \mathcal{D},\mathcal{T})} & {\Fun(\Lambda \mathcal{C},\mathcal{T})}
	\arrow["{\Lambda {\rhov}}"', from=1-2, to=2-2]
	\arrow["{\chi_*}"', from=2-2, to=3-2]
	\arrow["{(\Lambda \iota)^*}", from=3-1, to=3-2]
\end{tikzcd}\]
Then $((\chi_*\circ (\Lambda {\rhov}))\downarrow (\Lambda \iota^*))$ is the bicategory where 
\begin{enumerate}
    \item\label{ex:under:3:it:0} a 0-cell is a tuple consisting of 
    \begin{enumerate}
        \item a functor $M\colon \Lambda \mathcal{D}\to \mathcal{T}$,
        \item  a 1-cell 
\[\mu_f\colon \sh{{\rhov}(f)}\to M(\iota(f))\] for every endomorphism 1-cell $f$ of $\mathcal{C}$ (regarded as a 0-cell in $\Lambda \mathcal{C}$), and 
        \item an equality 
\[\mu_f\odot  M(\iota(k,\beta))=\tr(\phi^{-1}\odot {\rhov}(\beta)\odot \phi)\odot \mu_{f'}\]
        for each 2-cell $\beta\colon f\odot k\Rightarrow  k\odot f'$ in $\mathcal{C}$ where $k$ is dualizable.  (The pair $(k,\beta)$ is a 1-cell of $\Lambda \mathcal{C}$.)
        
        Recall that $\phi\colon {\rhov}(f)\odot {\rhov}(k)\Rightarrow {\rhov}(f\odot k)$ is a structure map for ${\rhov}$.
    \end{enumerate}
    \item\label{ex:under:3:it:1}  a 1-cell $(M,\mu)\to (M,\mu')$ is a tuple consisting of 
    \begin{enumerate}
        \item a 1-cell 
\[\nu_g\colon M(g)\to M'(g)\] in $\mathcal{T}$ for each endomorphism 1-cell $g$ of $\mathcal{D}$ (regarded as a 0-cell of $\Lambda \mathcal{D}$),
                \item an equality \[\nu_g\odot M'(\rho) = M(\rho)\odot \nu_{g'}\] for each 2-cell $\rho\colon g\odot h\to h\odot g'$ in $\mathcal{D}$ where $h$ is dualizable, 
        \item an equality $\mu_f\circ \nu_{\iota(f)}=\mu'_f $ for each endomorphism 1-cell $f$ of $\mathcal{C}$.
    \end{enumerate}
    \item a 2-cell $\nu\Rightarrow \nu'$ is equalities $\nu_g=\nu_{g}'$ for all endomorphism 1-cells in  $g$ in $\mathcal{D}$.
\end{enumerate}
\end{ex}
If there is a 2-cell  $\alpha\colon k\to k'$ from $(k,\beta)$ to $(k',\beta')$ then 
$M(k,\beta)=M(k',\beta')$ since $\mathcal{T}$ has only identity 2-cells.  In particular, 
the analogous diagrams to those in \cref{ex:nat_diagrams,ex:trans_diagrams} automatically commute since they are composed of identity maps. We see the same result for the conditions coming from modifications.

\begin{rmk}
    The analog of \cref{ex:under:3} for the coshadow and cotrace is very similar.  The only differences are  
    \begin{enumerate}
        \item $\sh{{\rhov}(f)}$ is replaced by $\coshh{{\rhov}(f)}$,
        \item $\tr({\rhov}(\beta))$ is replaced by $\co({\rhov}(\beta))$, 
        \item $\beta\colon f\odot k\to k\odot f'$ is replaced by $\beta \colon k\triangleright f\to f'\triangleleft k$, 
        and 
        \item all instances of $\Lambda$ are replaced by $\bar{\Lambda} $.
    \end{enumerate}
\end{rmk}

We can now start to define a functor 
    \[X\colon ({\rhov}\downarrow \iota^*)^d\to (\Lambda {\rhov}\downarrow \Lambda \iota^*)\]
    that extends the character defined in \cref{lab:def:character}.

\begin{lem}\label{lem:0:cell:functor} Let $({\rho},\zeta)$ be a 0-cell  as in \cref{ex:under:2}(\ref{ex:under:2:it:0}).  
If \begin{enumerate}
    \item all 1-cells in $\mathcal{C}$ are dualizable and 
    \item  $\zeta_c$ is dualizable for all 0-cells $c$ in $\mathcal{C}$
\end{enumerate} then  
    \begin{enumerate}[resume]
        \item the functor $\Lambda \mathcal{D}\xto{\Lambda {\sigma}}\Lambda \mathcal{B}\xto{\chi}\mathcal{T}$ and 
        \item the 1-cells 
\[\tr(\zeta_f^{-1})\colon\sh{{\rhov}(f)} \to \sh{{\sigma}(\iota(f))}\] for $\zeta_f\colon \zeta_c\odot {\sigma}(\iota(f)) \Rightarrow {\rhov}(f)\odot \zeta_{c}$ associated to each endomorphism 1-cell $f$
    \end{enumerate}
    define a 0-cell as in \cref{ex:under:3}(\ref{ex:under:3:it:0}).
\end{lem}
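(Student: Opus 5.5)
The plan is to check the three pieces of data required in \cref{ex:under:3}(\ref{ex:under:3:it:0}) one at a time. Most of the work reduces to results already established: \cref{lambda_of_a_composite}, \cref{thm:character}, \cref{lem:comp}, and the fact that $\mathcal{T}$ has only identity 2-cells.

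First, the functor $M=\chi\circ\Lambda\sigma\colon\Lambda\mathcal{D}\to\mathcal{T}$ is a strict composite of a strong 2-functor (\cref{lambda_of_a_composite}) and a strict 2-functor (\cref{thm:character}). It is therefore a legitimate 0-cell datum, with $M(g)=\sh{\sigma(g)}$.

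Second, I will check that $\tr(\zeta_f^{-1})$ is defined and has the correct source and target. Since $\zeta_f\colon \zeta_c\odot\sigma(\iota(f))\Rightarrow\rho(f)\odot\zeta_c$ is an isomorphism, its inverse is a 2-cell
\[\rho(f)\odot\zeta_c\Rightarrow\zeta_c\odot\sigma(\iota(f)).\]
Because $\zeta_c$ is dualizable by hypothesis (2), $(\zeta_c,\zeta_f^{-1})$ is a 1-cell $\rho(f)\to\sigma(\iota(f))$ in $\Lambda\mathcal{B}$ in the sense of \cref{Lambda_bicat}. By \cref{defn:bicat_trace}, its trace is then a map $\sh{\rho(f)}\to\sh{\sigma(\iota(f))}$. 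This gives $\mu_f=\chi(\zeta_c,\zeta_f^{-1})$.

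Third, and this is the main step, I will verify the equality
\[\mu_f\odot M(\iota(k,\beta))=\tr(\phi^{-1}\rho(\beta)\phi)\odot\mu_{f'}\]
for each 1-cell $(k,\beta)\colon f\to f'$ of $\Lambda\mathcal{C}$; hypothesis (1) guarantees that $k$ is dualizable. The idea is to recognize both sides as traces of composites in $\Lambda\mathcal{B}$ and then apply \cref{lem:comp}:
\begin{itemize}
\item The left side is $\chi\big((\zeta_c,\zeta_f^{-1})\boxdot \Lambda\sigma(\Lambda\iota(k,\beta))\big)$, which is the trace of a 2-cell
\[\rho(f)\odot\zeta_c\odot\sigma\iota(k)\Rightarrow \zeta_c\odot\sigma\iota(k)\odot\sigma\iota(f').\]
\item The right side is $\chi\big(\Lambda\rho(k,\beta)\boxdot(\zeta_{c'},\zeta_{f'}^{-1})\big)$, which is the trace of a 2-cell
\[\rho(f)\odot\rho(k)\odot\zeta_{c'}\Rightarrow\rho(k)\odot\zeta_{c'}\odot\sigma\iota(f').\]
\end{itemize}
These two 1-cells of $\Lambda\mathcal{B}$ have different underlying 1-cells, $\zeta_c\odot\sigma\iota(k)$ and $\rho(k)\odot\zeta_{c'}$. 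However, the isomorphism $\zeta_k$ connects them. I claim $\zeta_k$ (or its inverse) is an invertible 2-cell between them in $\Lambda\mathcal{B}$. Once that holds, \cref{lem:treq} gives equality of the traces.

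The main obstacle is verifying that the square required for $\zeta_k$ to be a 2-cell in $\Lambda\mathcal{B}$ commutes. I would expand both $\boxdot$-composites using the explicit forms from \cref{lambda_of_a_composite}, and then paste the following diagrams:
\begin{itemize}
\item the composition axiom \cref{ex:under:2:it:0:1:fig:compose}, applied to the pairs $(f,k)$ and $(k,f')$, comparing $\zeta_{f\odot k}$ and $\zeta_{k\odot f'}$;
\item the naturality square \cref{ex:under:2:nat_1}, applied to the 2-cell $\beta\colon f\odot k\Rightarrow k\odot f'$;
\item interchange, to slide $\zeta_f^{-1}$ and $\zeta_{f'}^{-1}$ past $\zeta_k$.
\end{itemize}
Associators and the structure maps $\phi$ of $\rho$ and $\sigma$ are suppressed by coherence. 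Finally, because $\mathcal{T}$ has only identity 2-cells, no further compatibility conditions on $\mu$ need to be checked, as remarked after \cref{ex:under:3}.
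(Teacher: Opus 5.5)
Your proposal is correct and is essentially the paper's argument. The paper's \cref{fig:0:cell:functor} pastes the composition axiom for $(f,k)$ and $(k,f')$ with the naturality square for $\beta$ to show that $\zeta_k$ intertwines the two composite 2-cells; it then uses \cite[7.5]{Ponto_2012} (your \cref{lem:comp}) and the invertibility of $\zeta_k$ (your \cref{lem:treq}) to conclude the traces agree, while your treatment of the first two data items and of the dualizability of $\zeta_c\odot\sigma\iota(k)$ and $\rho(k)\odot\zeta_{c'}$ spells out what the paper leaves implicit.
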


Note the conditions here are exactly the conditions in \cref{def:conditions_for_induction}.

\begin{proof}  
The only verification needed  is to confirm that the following diagram commutes
% https://q.uiver.app/#q=WzAsNCxbMCwwLCJcXHNoe3tcXHJob30oZil9Il0sWzIsMCwiXFxzaHt7XFxyaG99KGYnKX0iXSxbMCwxLCJcXHNoe3tcXHNpZ21hIFxcaW90YX0oZil9Il0sWzIsMSwiXFxzaHt7XFxzaWdtYSBcXGlvdGF9KGYnKX0iXSxbMCwxLCJcXHRyKFxccGhpXnstMX1cXG9kb3Qge1xccmhvfShcXGJldGEpXFxvZG90IFxccGhpKSJdLFswLDIsIlxcdHIoXFx6ZXRhX2Zeey0xfSkiLDJdLFsxLDMsIlxcdHIoXFx6ZXRhX3tmJ31eey0xfSkiXSxbMiwzLCJcXHRyKFxccGhpXnstMX1cXG9kb3Qge1xcc2lnbWFcXGlvdGF9KFxcYmV0YSlcXG9kb3QgXFxwaGkpIiwyXV0=
\[\begin{tikzcd}
	{\sh{{\rho}(f)}} && {\sh{{\rho}(f')}} \\
	{\sh{{\sigma \iota}(f)}} && {\sh{{\sigma \iota}(f')}}
	\arrow["{\tr(\phi^{-1}\odot {\rho}(\beta)\odot \phi)}", from=1-1, to=1-3]
	\arrow["{\tr(\zeta_f^{-1})}"', from=1-1, to=2-1]
	\arrow["{\tr(\zeta_{f'}^{-1})}", from=1-3, to=2-3]
	\arrow["{\tr(\phi^{-1}\odot {\sigma\iota}(\beta)\odot \phi)}"', from=2-1, to=2-3]
\end{tikzcd}\]
for 2-cells 
$f\odot g\xRightarrow{\beta}g\odot f'$. 

 To see this first note the  diagram in \cref{fig:0:cell:functor} commutes by \cref{ex:under:2:it:0:1:fig:compose}. 
\begin{figure}
% https://q.uiver.app/#q=WzAsMTAsWzAsMCwie1xccmhvfShmKVxcb2RvdCB7XFxyaG99KGspXFxvZG90IFxcemV0YV97YycnfSJdLFs0LDAsIntcXHJob30oZilcXG9kb3QgXFx6ZXRhX3tjJ31cXG9kb3Qge1xcc2lnbWF9XFxpb3RhKGYnKSJdLFs0LDEsIlxcemV0YV97Y31cXG9kb3Qge1xcc2lnbWF9XFxpb3RhKGYpXFxvZG90IHtcXHNpZ21hfVxcaW90YShmJykiXSxbMSwxLCJ7XFxyaG99KGZcXG9kb3QgaylcXG9kb3QgXFx6ZXRhX3tjJyd9Il0sWzMsMSwiXFx6ZXRhX3tjfVxcb2RvdCB7XFxzaWdtYX1cXGlvdGEoZlxcb2RvdCBrKSJdLFsxLDIsIntcXHJob30oa1xcb2RvdCBmJylcXG9kb3QgXFx6ZXRhX3tjJyd9Il0sWzMsMiwiXFx6ZXRhX3tjfVxcb2RvdCB7XFxzaWdtYX1cXGlvdGEoa1xcb2RvdCBmJykiXSxbMCwyLCJ7XFxyaG99KGspXFxvZG90IHtcXHJob30oZicpXFxvZG90IFxcemV0YV97YycnfSJdLFswLDMsIntcXHJob30oaylcXG9kb3QgXFx6ZXRhX3tjJ31cXG9kb3Qge1xcc2lnbWF9XFxpb3RhKGYnKSJdLFs0LDMsIlxcemV0YV97Y31cXG9kb3Qge1xcc2lnbWF9XFxpb3RhKGspXFxvZG90IHtcXHNpZ21hfVxcaW90YShmJykiXSxbMCwxLCJcXGlkXFxvZG90IFxcemV0YV97a30iLDAseyJsZXZlbCI6Mn1dLFswLDMsIiIsMCx7ImxldmVsIjoyfV0sWzAsNywiIiwwLHsibGV2ZWwiOjIsInN0eWxlIjp7ImJvZHkiOnsibmFtZSI6ImRhc2hlZCJ9fX1dLFsxLDIsIlxcemV0YV9mXnstMX1cXG9kb3QgXFxpZCIsMCx7ImxldmVsIjoyfV0sWzIsNCwiIiwwLHsibGV2ZWwiOjJ9XSxbMiw5LCIiLDAseyJsZXZlbCI6Miwic3R5bGUiOnsiYm9keSI6eyJuYW1lIjoiZGFzaGVkIn19fV0sWzMsNCwiXFx6ZXRhX3tmXFxvZG90IGt9IiwwLHsibGV2ZWwiOjJ9XSxbMyw1LCJ7XFxyaG99KFxcYmV0YSlcXG9kb3QgXFxpZCIsMix7ImxldmVsIjoyfV0sWzQsNiwiXFxpZFxcb2RvdCB7XFxzaWdtYX0oXFxiZXRhKSIsMCx7ImxldmVsIjoyfV0sWzUsNiwiXFx6ZXRhX3trXFxvZG90IGYnfSIsMCx7ImxldmVsIjoyfV0sWzcsNSwiIiwwLHsibGV2ZWwiOjJ9XSxbNyw4LCJcXGlkXFxvZG90IFxcemV0YV97Zid9XnstMX0iLDIseyJsZXZlbCI6Mn1dLFs4LDksIlxcemV0YV9rXFxvZG90IFxcaWQiLDIseyJsZXZlbCI6Mn1dLFs5LDYsIiIsMCx7ImxldmVsIjoyfV1d
\[\begin{tikzcd}[column sep =.25in]
	{{\rho}(f)\odot {\rho}(k)\odot \zeta_{c''}} &&&& {{\rho}(f)\odot \zeta_{c'}\odot {\sigma}\iota(f')} \\
	& {{\rho}(f\odot k)\odot \zeta_{c''}} && {\zeta_{c}\odot {\sigma}\iota(f\odot k)} & {\zeta_{c}\odot {\sigma}\iota(f)\odot {\sigma}\iota(f')} \\
	{{\rho}(k)\odot {\rho}(f')\odot \zeta_{c''}} & {{\rho}(k\odot f')\odot \zeta_{c''}} && {\zeta_{c}\odot {\sigma}\iota(k\odot f')} \\
	{{\rho}(k)\odot \zeta_{c'}\odot {\sigma}\iota(f')} &&&& {\zeta_{c}\odot {\sigma}\iota(k)\odot {\sigma}\iota(f')}
	\arrow["{\id\odot \zeta_{k}}", Rightarrow,  from=1-1, to=1-5]
	\arrow[Rightarrow,  from=1-1, to=2-2]
	\arrow[Rightarrow,  dashed, from=1-1, to=3-1]
	\arrow["{\zeta_f^{-1}\odot \id}", Rightarrow,  from=1-5, to=2-5]
	\arrow["{\zeta_{f\odot k}}", Rightarrow,  from=2-2, to=2-4]
	\arrow["{{\rho}(\beta)\odot \id}"', Rightarrow,  from=2-2, to=3-2]
	\arrow["{\id\odot {\sigma}(\beta)}", Rightarrow,  from=2-4, to=3-4]
	\arrow[Rightarrow,  from=2-5, to=2-4]
	\arrow[Rightarrow,  dashed, from=2-5, to=4-5]
	\arrow[Rightarrow,  from=3-1, to=3-2]
	\arrow["{\id\odot \zeta_{f'}^{-1}}"', Rightarrow,  from=3-1, to=4-1]
	\arrow["{\zeta_{k\odot f'}}", Rightarrow,  from=3-2, to=3-4]
	\arrow["{\zeta_k\odot \id}"', Rightarrow,  from=4-1, to=4-5]
	\arrow[Rightarrow, from=4-5, to=3-4]
\end{tikzcd}\]
\caption{The equality in the proof of  \cref{lem:0:cell:functor}}\label{fig:0:cell:functor}
\end{figure}
 By \cite[7.5]{Ponto_2012}, the trace of the left composite 
  in \cref{fig:0:cell:functor} is 
\[\tr(\phi^{-1}\odot {\rho}(\beta)\odot \phi)\circ  \tr(\zeta_{f'}^{-1})\]
 and the trace of the  composite labeled $B$
  is 
\[\tr(\zeta_{f}^{-1})\circ  \tr(\phi^{-1}\odot ({\sigma}\iota\beta)\odot\phi).\] 
Since the horizontal maps are isomorphisms, these traces are equal. 
\end{proof}

\begin{lem}\label{lem:1:cell:functor}
    Let $\psi$ be a 1-cell $({\sigma},\zeta)\to ({\sigma}',\zeta')$ as in \cref{ex:under:2}(\ref{ex:under:2:it:1}).  If 
    \begin{enumerate}
        \item all 1-cells of $\mathcal{D}$ are dualizable and 
        \item $\psi_d$ is dualizable for all 0-cells $d$ in $\mathcal{D}$ 
    \end{enumerate}  then 
    \begin{enumerate}[resume]
        \item the 1-cells 
	\[\tr(\psi_g^{-1})\colon \sh{{\sigma}(g)}\to \sh{{\sigma}'(g)}\] for each endomorphism 1-cell in $\mathcal{D}$
    \end{enumerate}
    define a 1-cell $(\chi\circ \Lambda {\sigma}, \tr(\zeta_-))\to (\chi\circ \Lambda {\sigma}', \tr(\zeta'_-))$ as in \cref{ex:under:3}(\ref{ex:under:3:it:1}).
\end{lem}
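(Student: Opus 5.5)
The argument mirrors the proof of \cref{lem:0:cell:functor}, with $\psi$ in place of $\zeta$. Since $\psi_d$ is dualizable and $\psi_g\colon \psi_d\odot \sigma'(g)\Rightarrow \sigma(g)\odot\psi_{d}$ is an isomorphism, the inverse $\psi_g^{-1}\colon \sigma(g)\odot \psi_d\Rightarrow \psi_d\odot\sigma'(g)$ has the form of \cref{defn:bicat_trace}. Its trace is therefore a morphism $\tr(\psi_g^{-1})\colon\sh{\sigma(g)}\to\sh{\sigma'(g)}$ in $\mathbf{T}$, which gives the components $\nu_g$. Two equalities from \cref{ex:under:3}(\ref{ex:under:3:it:1}) remain to be checked. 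Both reduce, via the composition result \cite[7.5]{Ponto_2012} (as in \cref{lem:comp}), to commutative diagrams of 2-cells in $\mathcal{B}$ whose horizontal maps are isomorphisms.

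\emph{Naturality in $\Lambda\mathcal{D}$.} Let $\beta\colon g\odot h\Rightarrow h\odot g'$ be a 2-cell with $h$ dualizable. We need
\[\tr(\psi_g^{-1})\circ\tr(\phi^{-1}\sigma'(\beta)\phi)=\tr(\phi^{-1}\sigma(\beta)\phi)\circ\tr(\psi_{g'}^{-1}).\]
I would build the analogue of \cref{fig:0:cell:functor} with $\zeta$ replaced by $\psi$. Its outer rows run between $\sigma(g)\odot\sigma(h)\odot\psi_{d''}$ and $\psi_d\odot\sigma'(h)\odot\sigma'(g')$. Its interior squares are the composition axiom \cref{ex:under:2:it:1:fig:compose} for $\psi$ applied to $g\odot h$ and $h\odot g'$, together with the naturality square \cref{ex:under:2:nat_2} applied to $\vartheta=\beta$. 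Going around the two boundary paths, each is a composite of a $\sigma$-twisted map and a $\psi^{-1}$-map. By \cite[7.5]{Ponto_2012}, the traces of the two boundaries are the two sides of the desired equality. The structure isomorphisms $\phi$ and the reassociation of $\psi_h$ appear on both ends and are isomorphisms, so the two traces agree, exactly as at the end of the proof of \cref{lem:0:cell:functor}. Where a comparison along an isomorphism is needed, \cref{lem:treq} handles it.

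\emph{Compatibility with $\mu$.} For an endomorphism $f$ of $c$ in $\mathcal{C}$, we need $\tr(\zeta_f^{-1})$ followed by $\tr(\psi_{\iota(f)}^{-1})$ to equal $\tr(\zeta_f'^{-1})$. The diagram \cref{ex:under:2:it:1:fig:modification}, inverted, says that
\[\rho(f)\odot\zeta_c\odot\psi_{\iota c}\xRightarrow{\zeta_f^{-1}\odot 1}\zeta_c\odot\sigma\iota(f)\odot\psi_{\iota c}\xRightarrow{1\odot\psi_{\iota f}^{-1}}\zeta_c\odot\psi_{\iota c}\odot\sigma'\iota(f)\]
agrees with $\zeta_f'^{-1}$ after conjugating by $\alpha_c$. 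By \cite[7.5]{Ponto_2012}, the trace of the composite with respect to the dualizable 1-cell $\zeta_c\odot\psi_{\iota c}$ is the composite of the traces. To pass from this trace to the trace of $\zeta_f'^{-1}$ with respect to $\zeta'_c$, I would use \cref{lem:treq} with $\alpha_c$ as the comparison.

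\emph{Main obstacle.} \cref{lem:treq} requires $\alpha_c$ to be an isomorphism, and \cref{ex:under:2} does not assume this. My first attempt would be to strengthen \cref{lem:treq} to a non-invertible $\alpha_c$ between dualizable 1-cells, using the naturality of trace under 2-cells intertwining the twisted maps (a mate argument as in \cite[Section 9]{Ponto_2012}). If that fails, I would restrict $(\rho\downarrow\iota^*)^d$ to invertible $\alpha_c$. The remaining conditions in \cref{ex:under:3}(\ref{ex:under:3:it:1}) and the 2-cell level are automatic, because $\mathcal{T}$ has only identity 2-cells.
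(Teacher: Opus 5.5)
Your plan follows the paper's proof. The naturality equality comes from the analogue of \cref{fig:0:cell:functor}, with $\psi_h$ as the comparison isomorphism fed to \cref{lem:treq}. You are right that the naturality square \cref{ex:under:2:nat_2} for $\beta$ is needed alongside \cref{ex:under:2:it:1:fig:compose}. Compatibility with $\mu$ comes from \cite[7.5]{Ponto_2012} together with the inverted modification square, which is \cref{fig:1:cell:functor}.

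The obstacle you isolate is genuine, and the paper's proof does not address it. It only records that \cref{fig:1:cell:functor} commutes by \cref{ex:under:2:it:1:fig:modification}. Passing from that square to the equality of traces is \cref{lem:treq} along $\alpha_c$ (labelled $\beta_c$ there), and \cref{ex:under:2} never assumes $\alpha_c$ is invertible.

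Your first remedy cannot succeed, because trace is not invariant along non-invertible intertwiners. In $\Vect_k$, take $M=k$, $N=0$, the comparison $0\colon M\Rightarrow N$, and identity twisted maps. The square of \cref{lem:treq} commutes, but the traces are $1$ and $0$. A mate argument only yields dinaturality: for $\kappa\colon P\odot N\Rightarrow M\odot Q$, the traces of $(\alpha\odot\id)\circ\kappa$ and $\kappa\circ(\id\odot\alpha)$ agree. That gives your equality only if $\beta$ factors through $\id\odot\alpha$.

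In fact the lemma as stated is false. Take $G=H$ trivial, $\iota=\id$, and $\mathcal{B}=\Vect_k$ as a one-object bicategory with the identity shadow.
\begin{itemize}
\item The unit axioms force $\zeta_U$, $\psi_U$, $\zeta'_U$ to be unitor composites.
\item Hence $\tr(\zeta_U^{-1})$, $\tr(\psi_U^{-1})$ and $\tr(\zeta_U'^{-1})$ are multiplication by $\dim Z$, $\dim\Psi$ and $\dim Z'$.
\item \cref{ex:under:2:it:1:fig:modification} holds for every $\alpha\colon Z\otimes\Psi\to Z'$, by naturality of the unitors.
\end{itemize}
Then $Z=\Psi=k$, $Z'=0$, $\alpha=0$ gives a 1-cell of $(\rho\downarrow\iota^*)^d$ violating $\mu_U\circ\nu_U=\mu'_U$, since $1\neq 0$.

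So your second option is the correct fix, not merely a fallback. Require each $\alpha_c$ to be invertible in \cref{def:conditions_for_induction}. This condition is closed under composition, and identity 1-cells satisfy it because their $\alpha_c$ is a unitor. With it, \cref{lem:treq} applies verbatim to \cref{fig:1:cell:functor} with $M=\zeta_c\odot\psi_{\iota(c)}$ and $N=\zeta'_c$. The restriction changes \cref{thm:trace_induced} but not \cref{intro:induced}, whose proof uses only the 0-cell part, \cref{lem:0:cell:functor}.
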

The conditions are those from \cref{def:conditions_for_induction}.

\begin{proof}       It is enough to check the equalities in \cref{ex:under:3}\eqref{ex:under:3:it:1}.   Let $\xi\colon g\odot h\Rightarrow h\odot g' $ be a 2-cell in $\mathcal{D}$ where $h$ is dualizable.  
Then verifying the commutativity of 
% https://q.uiver.app/#q=WzAsNCxbMCwwLCJcXHNoe1coZyl9Il0sWzAsMSwiXFxzaHtXJyhnKX0iXSxbMiwwLCJcXHNoe1coZycpfSJdLFsyLDEsIlxcc2h7VycoZycpfSJdLFswLDIsIlxcdHIoXFxwaGleey0xfVxcb2RvdCBXKFxceGkpXFxvZG90IFxccGhpKSJdLFsyLDMsIlxcdHIoXFxwc2lfe2cnfV57LTF9KSJdLFswLDEsIlxcdHIoXFxwc2lfZ157LTF9KSIsMl0sWzEsMywiXFx0cihcXHBoaV57XzF9XFxvZG90IFcnKFxceGkpXFxvZG90IFxccGhpKSIsMl1d
\[\begin{tikzcd}
	{\sh{{\sigma}(g)}} && {\sh{{\sigma}(g')}} \\
	{\sh{{\sigma}'(g)}} && {\sh{{\sigma}'(g')}}
	\arrow["{\tr(\phi^{-1}\odot {\sigma}(\xi)\odot \phi)}", from=1-1, to=1-3]
	\arrow["{\tr(\psi_g^{-1})}"', from=1-1, to=2-1]
	\arrow["{\tr(\psi_{g'}^{-1})}", from=1-3, to=2-3]
	\arrow["{\tr(\phi^{_1}\odot {\sigma}'(\xi)\odot \phi)}"', from=2-1, to=2-3]
\end{tikzcd}\]
    is essentially the same as the diagram in \cref{fig:0:cell:functor} using \cref{ex:under:2:it:1:fig:compose}.

    For the second equality 
% https://q.uiver.app/#q=WzAsMyxbMCwwLCJcXHNoe3tcXHJob30oZil9Il0sWzEsMCwiXFxzaHt7XFxzaWdtYX0oXFxpb3RhKGYpKX0iXSxbMSwxLCJcXHNoe3tcXHNpZ21hfScoXFxpb3RhKGYpKX0iXSxbMCwxLCJcXHRyKFxcemV0YV9mXnstMX0pIl0sWzAsMiwiXFx0cihcXHpldGFfZideey0xfSkiLDJdLFsxLDIsIlxcdHIoXFxwc2lfe1xcaW90YShmKX1eey0xfSkiXV0=
\[\begin{tikzcd}
	{\sh{{\rho}(f)}} & {\sh{{\sigma}(\iota(f))}} \\
	& {\sh{{\sigma}'(\iota(f))}}
	\arrow["{\tr(\zeta_f^{-1})}", from=1-1, to=1-2]
	\arrow["{\tr(\zeta_f'^{-1})}"', from=1-1, to=2-2]
	\arrow["{\tr(\psi_{\iota(f)}^{-1})}", from=1-2, to=2-2]
\end{tikzcd}\]
we use \cite[7.5]{Ponto_2012} to compare maps without traces.   This is the diagram in \cref{fig:1:cell:functor}.
\begin{figure}
% https://q.uiver.app/#q=WzAsNSxbMCwwLCJ7XFxyaG99KGYpXFxvZG90IFxcemV0YV97Y31cXG9kb3QgXFxwc2lfe1xcaW90YShjKX0iXSxbMSwwLCJ7XFxyaG99KGYpXFxvZG90IFxcemV0YSdfe2N9Il0sWzAsMSwiXFx6ZXRhX2NcXG9kb3Qge1xcc2lnbWEgfShcXGlvdGEoZikpXFxvZG90IFxccHNpX3tcXGlvdGEoYyl9Il0sWzAsMiwiXFx6ZXRhX2NcXG9kb3QgXFxwc2lfe1xcaW90YShjKX1cXG9kb3Qge1xcc2lnbWF9JyhcXGlvdGEoZikpIl0sWzEsMiwiXFx6ZXRhJ19jXFxvZG90IHtcXHNpZ21hfScoXFxpb3RhKGYpKSJdLFswLDEsIlxcaWRcXG9kb3QgXFxiZXRhX3tjfSIsMCx7ImxldmVsIjoyfV0sWzAsMiwiXFx6ZXRhX2Zeey0xfVxcb2RvdCBcXGlkIiwwLHsibGV2ZWwiOjJ9XSxbMSw0LCIoXFx6ZXRhX2YnKSBeey0xfSIsMCx7ImxldmVsIjoyfV0sWzIsMywiXFxpZFxcb2RvdCBcXHBzaV97XFxpb3RhKGYpfV57LTF9IiwwLHsibGV2ZWwiOjJ9XSxbMyw0LCJcXGJldGFfY1xcb2RvdCBcXGlkIiwwLHsibGV2ZWwiOjJ9XV0=
\[\begin{tikzcd}
	{{\rho}(f)\odot \zeta_{c}\odot \psi_{\iota(c)}} & {{\rho}(f)\odot \zeta'_{c}} \\
	{\zeta_c\odot {\sigma }(\iota(f))\odot \psi_{\iota(c)}} \\
	{\zeta_c\odot \psi_{\iota(c)}\odot {\sigma}'(\iota(f))} & {\zeta'_c\odot {\sigma}'(\iota(f))}
	\arrow["{\id\odot \beta_{c}}", Rightarrow,  from=1-1, to=1-2]
	\arrow["{\zeta_f^{-1}\odot \id}", Rightarrow,  from=1-1, to=2-1]
	\arrow["{(\zeta_f') ^{-1}}", Rightarrow,  from=1-2, to=3-2]
	\arrow["{\id\odot \psi_{\iota(f)}^{-1}}", Rightarrow,  from=2-1, to=3-1]
	\arrow["{\beta_c\odot \id}", Rightarrow,  from=3-1, to=3-2]
\end{tikzcd}\]
\caption{The second equality in the proof of \cref{lem:1:cell:functor}}\label{fig:1:cell:functor}
\end{figure}
It commutes by assumption \cref{ex:under:2:it:1:fig:modification}.
\end{proof}

It feels a bit overkill to also state the case for 2-cells as a lemma, but it since it will make it easier to reference we follow the established pattern.

\begin{lem}\label{lem:2:cell:functor} Let $\gamma$ be a 2-cell $(\psi,\alpha)\to (\psi',\alpha')$ as in 
 \cref{ex:under:2}(\ref{ex:under:2:it:3}). If 
 $\gamma_d$ is an isomorphism for all 0-cells $d$ in $\mathcal{D}$, then the images of $(\psi,\alpha)$ and $(\psi',\alpha')$ in $((\chi_*\circ (\Lambda {\rhov}))\downarrow (\Lambda \iota^*))$ are equal.
\end{lem}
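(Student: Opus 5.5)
The image of a 1-cell $(\psi,\alpha)$ under the functor being built is, by \cref{lem:1:cell:functor}, the family of 1-cells $\tr(\psi_g^{-1})\colon \sh{{\sigma}(g)}\to \sh{{\sigma}'(g)}$, one for each endomorphism 1-cell $g$ of $\mathcal{D}$. The image of $(\psi',\alpha')$ is the family $\tr((\psi'_g)^{-1})$. Since a 2-cell in $((\chi_*\circ (\Lambda {\rhov}))\downarrow (\Lambda \iota^*))$ is nothing more than an equality of these components (\cref{ex:under:3}), it suffices to show $\tr(\psi_g^{-1})=\tr((\psi'_g)^{-1})$ for every $g$.

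The plan is to reduce this to \cref{lem:treq}. For a fixed endomorphism $g\colon d\to d$, the 2-cell $\psi_g^{-1}$ has the form ${\sigma}(g)\odot \psi_d\Rightarrow \psi_d\odot {\sigma}'(g)$, so it is a 1-cell $(\psi_d,\psi_g^{-1})$ of $\Lambda\B$ from ${\sigma}(g)$ to ${\sigma}'(g)$. Likewise $(\psi'_d,(\psi'_g)^{-1})$ is such a 1-cell. The modification condition \cref{ex:under:2:it:2:fig:modification} says
\[(1\odot\gamma_d)\circ\psi_g=\psi'_g\circ(\gamma_d\odot 1).\]
Inverting both sides (all maps here are invertible, $\gamma_d$ by hypothesis) gives
\[(\gamma_d\odot 1)\circ \psi_g^{-1}=(\psi'_g)^{-1}\circ(1\odot \gamma_d),\]
which is exactly the square required in \cref{lem:treq} with $\phi=\gamma_d$, $\beta=\psi_g^{-1}$, $\gamma=(\psi'_g)^{-1}$, $P={\sigma}(g)$, $Q={\sigma}'(g)$. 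Here $\psi_d$ is dualizable because we work in $({\rhov}\downarrow \iota^*)^d$. \cref{lem:treq} then yields the equality of traces. Equivalently, $\gamma_d$ is a 2-cell in $\Lambda\B$, and \cref{thm:character} shows that $\chi$ sends it to an identity, forcing the traces to agree.

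The only point needing care, and the main obstacle, is bookkeeping the orientation. The square must be checked to be in the exact form of \cref{lem:treq} after inversion, with $\gamma_d$ acting on the correct factor on each side. One should also note that $\psi'_d\cong\psi_d$ is dualizable, although \cref{lem:treq} already handles this. The remaining components of the image, namely the $\tr(\zeta_f^{-1})$ data on the 0-cells, are unchanged since source and target agree. So the images of the two 1-cells coincide, and the image of $\gamma$ is the identity 2-cell.
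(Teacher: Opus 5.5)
Your proposal is correct and is the argument the paper intends. The paper writes no separate proof of this lemma, but the evident one, mirroring \cref{lem:2comp}, is yours: after inverting the isomorphisms $\psi_g$ and $\psi'_g$, the modification square \cref{ex:under:2:it:2:fig:modification} becomes the square of \cref{lem:treq} with $\phi=\gamma_d$ (equivalently, $\gamma_d$ is a 2-cell $(\psi_d,\psi_g^{-1})\Rightarrow(\psi'_d,(\psi'_g)^{-1})$ in $\Lambda\B$), so $\tr(\psi_g^{-1})=\tr((\psi'_g)^{-1})$. One small point: the rearrangement itself needs only that $\psi_g$ and $\psi'_g$ are invertible, and the hypothesis that $\gamma_d$ is an isomorphism is what \cref{lem:treq} requires.
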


Then \cref{lem:0:cell:functor,lem:1:cell:functor,lem:2:cell:functor} are significant contributions to the proof of the following result. \begin{thm}\label{thm:trace_induced}
    If $\mathcal{B}$ has a shadow, the assignment of a 0-cell to its character extends to a strict 2-functor
    \[X\colon ({\rhov}\downarrow \iota^*)^d\to (\Lambda {\rhov}\downarrow \Lambda \iota^*).\]
\end{thm}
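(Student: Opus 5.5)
The plan is to define $X$ cell by cell using \cref{lem:0:cell:functor,lem:1:cell:functor,lem:2:cell:functor}, and then check the strict functoriality axioms. On a 0-cell $(\sigma,\zeta)$ of $(\rhov\downarrow\iota^*)^d$, set $X(\sigma,\zeta)=(\chi\circ\Lambda\sigma,\ \tr(\zeta_-^{-1}))$. \cref{lem:0:cell:functor} shows this is a 0-cell of $(\Lambda\rhov\downarrow\Lambda\iota^*)$, with $\Lambda\rhov$ understood as $\chi_*\circ\Lambda\rhov$ as in \cref{ex:under:3}. On a 1-cell $(\psi,\alpha)$, set $X(\psi,\alpha)=\tr(\psi_-^{-1})$; this is a 1-cell by \cref{lem:1:cell:functor}. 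On a 2-cell $\gamma$, send it to the identity. This is legitimate by \cref{lem:2:cell:functor}, or directly: $\gamma_d$ is a 2-cell making the square in \cref{ex:under:2:it:2:fig:modification} commute, so \cref{lem:treq} gives $\tr(\psi_g^{-1})=\tr(\psi_g'^{-1})$. Where $\gamma_d$ is not invertible, I would either restrict to invertible $\gamma$ as in the lemma, or note that $(\rhov\downarrow\iota^*)^d$ is defined so that only such 2-cells appear.

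Next I check preservation of 1-cell composition. For composable $(\psi,\alpha)\colon(\sigma,\zeta)\to(\sigma',\zeta')$ and $(\psi',\alpha')\colon(\sigma',\zeta')\to(\sigma'',\zeta'')$, the composite has components $\psi_d\odot\psi'_d$. Its structure 2-cell is $(\psi\odot\psi')_g=(\psi_g\odot\id)\circ(\id\odot\psi'_g)$, the standard composite of strong transformations. Inverting, $(\psi\odot\psi')_g^{-1}$ has exactly the form $\boxmaps{\psi_g^{-1}}{\psi_g'^{-1}}$ of composition in $\Lambda\B$, read as a twisted endomorphism $\sigma(g)\odot\psi_d\Rightarrow\psi_d\odot\sigma'(g)$. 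So \cref{lem:comp}, that is \cite[7.5]{Ponto_2012}, gives
\[\tr\big((\psi\odot\psi')_g^{-1}\big)=\tr(\psi_g'^{-1})\circ\tr(\psi_g^{-1})\]
in the appropriate order. For identity 1-cells, the component is $U_{\sigma(d)}$ with structure 2-cell built from unitors, and \cref{lem:unit} gives the identity on $\sh{\sigma(g)}$. Associators and unitors in the target comma bicategory are identities, because $\mathcal{T}$ has only identity 2-cells, so strictness on 1-cells reduces to these two equalities.

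Finally, the 2-cell axioms are automatic. Vertical and horizontal composition of 2-cells, identity 2-cells, and compatibility with the bicategorical structure all land in identities in a bicategory whose 2-cells are equalities, as in the proof of \cref{lem:2comp}. I expect the main obstacle to be the composition step. One must check that the dual of $\psi_d\odot\psi'_d$ and the inverse composite 2-cell agree, up to the suppressed coherence isomorphisms, with the $\boxdot$ composite, in the orientation required by \cite[7.5]{Ponto_2012}. The inversion $\psi_g\mapsto\psi_g^{-1}$ reverses the order of the factors, and the trace is taken of a map $\sigma(g)\odot\psi_d\Rightarrow\psi_d\odot\sigma'(g)$ rather than of $\psi_g$ itself, so I would write out the diagram relating $(\psi_g\odot\id)\circ(\id\odot\psi'_g)$ to its inverse with bicategorical coherence made explicit. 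After that the result follows formally.
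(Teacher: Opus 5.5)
Your proposal is correct and is essentially the paper's proof. The paper defines $X$ on cells by \cref{lem:0:cell:functor,lem:1:cell:functor,lem:2:cell:functor}. It gets strict preservation of 1-cell composition by identifying the composite structure 2-cell as a $\boxdot$-composite and applying \cite[7.5]{Ponto_2012} (equivalently \cref{lem:comp}). Identities are preserved by the unit trace computation: the paper cites \cite[7.4]{Ponto_2012}, which amounts to \cref{lem:unit}. All 2-cell conditions are vacuous because the target has only identity 2-cells.

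Your caveat about 2-cells is well placed. \cref{lem:treq} needs $\gamma_d$ invertible, and the paper's \cref{lem:2:cell:functor} assumes exactly this, even though \cref{def:conditions_for_induction} imposes nothing on 2-cells. So your first option, restricting to invertible $\gamma$, is the one that matches the paper; the second option does not hold as the definition is stated.
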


\begin{proof}\cref{lem:0:cell:functor,lem:1:cell:functor,lem:2:cell:functor} define the functor on 0-, 1-, and 2-cells.  
 
 Given 1-cells $({\sigma}, \zeta) \to ({\sigma}', \zeta') \to ({\sigma}'', \zeta'') $ where 
 \begin{itemize}
 \item the first is given by 1-cells $\psi_d$ and 2-cells $\psi_g, \alpha_c$ and 
 \item the second is 
given by 1-cells $\psi_d'$ and 2-cells $\psi_g', \alpha_c'$, 
\end{itemize}
then the composite is given by the 1-cells 
\[\psi_d\odot \psi_d'\] 
for $d$ in $\mathcal{D}$, and 2-cells 
\begin{equation}\label{eq:characer_as_functor_induced_1}
\psi_d\odot \psi_d'\odot {\sigma}''(d)\xRightarrow{\id\odot \psi'_g}\psi_d\odot {\sigma}'(d)\odot \psi_{d'}''\xRightarrow{\psi_g\odot \id} {\sigma}(d)\odot \psi_{d'}'\odot \psi_{d'}''
\end{equation}
and 
\[\zeta_c\odot \psi_{\iota(c)}\odot \psi_{\iota(c)}' \xRightarrow{\alpha_c\odot \id} \zeta_c'\odot \psi_{\iota(c)}' \xRightarrow{\alpha_c'}\zeta''_c\]
The image of this composite 1-cell under the definition in \cref{lem:1:cell:functor}, is the trace of \eqref{eq:characer_as_functor_induced_1}.  This   is the composites of the traces of $\psi_g$ and $\psi_g'$ and the functor  respects composition strictly.

The unit 1-cell associated to a 0-cell $({\sigma}, \zeta)$ is the 1-cells $U_{{\sigma}(d)}$ and the 2-cells 
\begin{equation}\label{eq:characer_as_functor_induced_2}
U_{{\sigma}(d)}\odot {\sigma}(g)\Rightarrow {\sigma}(g)\Rightarrow {\sigma}(g)\odot U_{{\sigma}(d)}
\end{equation}
and 
\[\zeta_c\odot U_{{\sigma}(\iota(c))}\Rightarrow \zeta_c.\]
The image of this 1-cell under the definition in \cref{lem:1:cell:functor} is 
the trace of \eqref{eq:characer_as_functor_induced_2}.
By \cite[7.4]{Ponto_2012}, this is \[\sh{\id}\colon \sh{{\sigma}(g)}\to \sh{{\sigma}(g)}\]
and so identity is strictly preserved.
\end{proof}

\begin{proof}[Proof of \cref{intro:induced}]
Recall that $I_\mathcal{C}^\mathcal{D}({\rhov})$  is a 0-cell in $({\rhov}\downarrow \iota^*)^d$.     Then by \cref{thm:trace_induced}, $\mathcal{X}_{I_\mathcal{C}^\mathcal{D}({\rhov})}$ is a 0-cell in  $\Lambda {\rhov}\downarrow \Lambda \iota^*$.  The unique map \[I_{\Lambda \mathcal{C}}^{\Lambda \mathcal{D}} (\mathcal{X}_{\rhov})\to \mathcal{X}_{I_\mathcal{C}^\mathcal{D}({\rhov})}\] exists since $I_{\Lambda \mathcal{C}}^{\Lambda \mathcal{D}} (\mathcal{X}_{\rhov})$ is the initial object in $\Lambda {\rhov}\downarrow \Lambda \iota^*$.
\end{proof}

\bibliographystyle{amsalpha2} \bibliography{refs} 

\providecommand{\bysame}{\leavevmode\hbox to3em{\hrulefill}\thinspace}
\providecommand{\MR}{\relax\ifhmode\unskip\space\fi MR }
% \MRhref is called by the amsart/book/proc definition of \MR.
\providecommand{\MRhref}[2]{%
  \href{http://www.ams.org/mathscinet-getitem?mr=#1}{#2}
}
\providecommand{\doi}[1]{%
  doi:\href{https://dx.doi.org/#1}{#1}}
\providecommand{\arxiv}[1]{%
  arXiv:\href{https://arxiv.org/abs/#1}{#1}}
\providecommand{\href}[2]{#2}
\begin{thebibliography}{BCSW83}

\bibitem[Bar24]{barhite2023bicategorical}
J.~Barhite, \emph{Bicategorical traces and cotraces}, Theory and Applications
  of Categories \textbf{41} (2024), no.~22, 707--759. \arxiv{2307.13070}

\bibitem[Bar11]{Bartlett}
B.~Bartlett, \emph{The geometry of unitary 2-representations of finite groups
  and their 2-characters}, Appl. Categ. Structures \textbf{19} (2011), no.~1,
  175--232. \doi{10.1007/s10485-009-9189-0} \arxiv{0807.1329}

\bibitem[BZN]{BZN}
D.~Ben-Zvi and D.~Nadler, \emph{Secondary traces}. \arxiv{1305.7177}

\bibitem[BCSW83]{betti}
R.~Betti, A.~Carboni, R.~Street, and R.~Walters, \emph{Variation through
  enrichment}, J. Pure Appl. Algebra \textbf{29} (1983), no.~2, 109--127.
  \doi{10.1016/0022-4049(83)90100-7}

\bibitem[Bor94]{Borceux}
F.~Borceux, \emph{Handbook of categorical algebra. 1}, Encyclopedia of
  Mathematics and its Applications, vol.~50, Cambridge University Press,
  Cambridge, 1994, Basic category theory.

\bibitem[CP22]{cp:iterated}
J.~A. Campbell and K.~Ponto, \emph{Iterated traces in 2-categories and
  {L}efschetz theorems}, Algebr. Geom. Topol. \textbf{22} (2022), no.~2,
  815--879. \doi{10.2140/agt.2022.22.815} \arxiv{1908.07497}

\bibitem[DP78]{Dold1978-ol}
A.~Dold and D.~Puppe, \emph{Duality, trace, and transfer}, Proceedings of the
  International Conference on Geometric Topology, Warsaw; Warsaw, 1978,
  pp.~81--102.

\bibitem[Elg07]{elgueta}
J.~Elgueta, \emph{Representation theory of 2-groups on {K}apranov and
  {V}oevodsky's 2-vector spaces}, Advances in Mathematics \textbf{213} (2007),
  no.~1, 53--92. \doi{https://doi.org/10.1016/j.aim.2006.11.010}

\bibitem[GK08]{GK}
N.~Ganter and M.~Kapranov, \emph{Representation and character theory in
  2-categories}, Adv. Math. \textbf{217} (2008), no.~5, 2268--2300.
  \doi{10.1016/j.aim.2007.10.004} \arxiv{math/0602510}

\bibitem[Gra74]{gray}
J.~W. Gray, \emph{Formal category theory: adjointness for {$2$}-categories},
  Lecture Notes in Mathematics, Vol. 391, Springer-Verlag, Berlin-New York,
  1974.

\bibitem[HKR00]{HKR}
M.~J. Hopkins, N.~J. Kuhn, and D.~C. Ravenel, \emph{Generalized group
  characters and complex oriented cohomology theories}, J. Amer. Math. Soc.
  \textbf{13} (2000), no.~3, 553--594. \doi{10.1090/S0894-0347-00-00332-5}

\bibitem[Lei]{leinster}
T.~Leinster, \emph{Basic bicategories}. \arxiv{math/9810017}

\bibitem[MP85]{bicat_coherence}
S.~Maclane and R.~Par\'{e}, \emph{Coherence for bicategories and indexed
  categories}, Journal of Pure and Applied Algebra \textbf{37} (1985), 59--80.

\bibitem[MP22]{MP:coherence}
C.~Malkiewich and K.~Ponto, \emph{Coherence for bicategories, lax functors, and
  shadows}, Theory Appl. Categ. \textbf{38} (2022), Paper No. 12, 328--373.
  \doi{10.1086/288336} \arxiv{2109.01249}

\bibitem[MS06]{may2004parametrized}
J.~P. May and J.~Sigurdsson, \emph{Parametrized homotopy theory}, Mathematical
  Surveys and Monographs, vol. 132, American Mathematical Society, Providence,
  RI, 2006. \doi{10.1090/surv/132} \arxiv{math/0411656}

\bibitem[Pon10]{p:thesis}
K.~Ponto, \emph{Fixed point theory and trace for bicategories}, Ast\'{e}risque
  (2010), no.~333, xii+102. \arxiv{0807.1471}

\bibitem[PS12]{Ponto_2012}
K.~Ponto and M.~Shulman, \emph{Shadows and traces in bicategories}, Journal of
  Homotopy and Related Structures \textbf{8} (2012), no.~2, 151--200.
  \doi{10.1007/s40062-012-0017-0} \arxiv{0910.1306}

\bibitem[Sta65]{stallings}
J.~Stallings, \emph{Centerless groups---an algebraic formulation of
  {G}ottlieb's theorem}, Topology \textbf{4} (1965), 129--134.
  \doi{10.1016/0040-9383(65)90060-1}

\end{thebibliography}

\end{document}